\setlist[enumerate]{format=\normalfont}
\newcommand{\marginparstretch}{0.6}
\let\oldmarginpar\marginpar
\renewcommand\marginpar[1]{\-\oldmarginpar[\framebox{\setstretch{\marginparstretch}\begin{minipage}{\marginparwidth}{\raggedleft\tiny #1}\end{minipage}}]{\framebox{\setstretch{\marginparstretch}\begin{minipage}{\marginparwidth}{\raggedright\tiny #1}\end{minipage}}}}
\tikzset{
        cvertex/.style={circle,draw=black,inner sep=1pt,outer sep=3pt},
        vertex/.style={circle,fill=black,inner sep=1pt,outer sep=3pt},
        DBs/.style={circle,draw=black,circle,fill=black,inner sep=0pt, minimum size=3pt},
        DB/.style={circle,draw=black,circle,fill=black,inner sep=0pt, minimum size=4pt},
         DWs/.style={circle,draw=black,circle,fill=white,inner sep=0pt, minimum size=3pt},
         DWds/.style={circle,draw=black,densely dotted,circle,fill=white,inner sep=0pt, minimum size=3pt},
        DW/.style={circle,draw=black,inner sep=0pt, minimum size=4pt},
        tvertex/.style={inner sep=1pt,font=\scriptsize},
        gap/.style={inner sep=0.5pt,fill=white},
        mid/.style={inner sep=0.5pt},
        Ggap/.style={inner sep=0.5pt,fill=green!40!black!20}}
\newcommand{\rotateRPY}[3]
{   \pgfmathsetmacro{\rollangle}{#1}
    \pgfmathsetmacro{\uppitchangle}{#2}
    \pgfmathsetmacro{\yawangle}{#3}

    \pgfmathsetmacro{\newxx}{cos(\yawangle)*cos(\uppitchangle)}
    \pgfmathsetmacro{\newxy}{sin(\yawangle)*cos(\uppitchangle)}
    \pgfmathsetmacro{\newxz}{-sin(\uppitchangle)}
    \path (\newxx,\newxy,\newxz);
    \pgfgetlastxy{\nxx}{\nxy};

    \pgfmathsetmacro{\newyx}{cos(\yawangle)*sin(\uppitchangle)*sin(\rollangle)-sin(\yawangle)*cos(\rollangle)}
    \pgfmathsetmacro{\newyy}{sin(\yawangle)*sin(\uppitchangle)*sin(\rollangle)+ cos(\yawangle)*cos(\rollangle)}
    \pgfmathsetmacro{\newyz}{cos(\uppitchangle)*sin(\rollangle)}
    \path (\newyx,\newyy,\newyz);
    \pgfgetlastxy{\nyx}{\nyy};

    \pgfmathsetmacro{\newzx}{cos(\yawangle)*sin(\uppitchangle)*cos(\rollangle)+ sin(\yawangle)*sin(\rollangle)}
    \pgfmathsetmacro{\newzy}{sin(\yawangle)*sin(\uppitchangle)*cos(\rollangle)-cos(\yawangle)*sin(\rollangle)}
    \pgfmathsetmacro{\newzz}{cos(\uppitchangle)*cos(\rollangle)}
    \path (\newzx,\newzy,\newzz);
    \pgfgetlastxy{\nzx}{\nzy};
}
\tikzset{RPY/.style={x={(\nxx,\nxy)},y={(\nyx,\nyy)},z={(\nzx,\nzy)}}}
\tikzstyle{mybox} = [draw=black, fill=blue!10, very thick,
\tikzstyle{boxtitle} =[fill=blue!50, text=white,rectangle,rounded corners]
\newtheorem{thm}{Theorem}[section]
\newtheorem{prop}[thm]{Proposition}
\newtheorem{lemma}[thm]{Lemma}
\newtheorem{defin}[thm]{Definition}
\newtheorem{cor}[thm]{Corollary}
\theoremstyle{definition} 
\newtheorem{example}[thm]{Example}
\newtheorem{setup}[thm]{Setup}
\newtheorem{remark}[thm]{Remark}
\newtheorem{notation}[thm]{Notation}
\numberwithin{equation}{section}
\newcounter{enumeratenoindentcounter}
\renewcommand{\t}[1]{\textnormal{#1}}
\def\op{\mathop{\rm op}\nolimits}
\def\CM{\mathop{\rm CM}\nolimits}
\def\depth{\mathop{\rm depth}\nolimits}
\def\fl{\mathop{\sf fl}\nolimits}
\def\mod{\mathop{\rm mod}\nolimits}
\def\coh{\mathop{\rm coh}\nolimits}
\def\refl{\mathop{\rm ref}\nolimits}
\def\proj{\mathop{\rm proj}\nolimits}
\def\pd{\mathop{\rm pd}\nolimits}
\def\Hom{\mathop{\rm Hom}\nolimits}
\def\RHom{\mathop{\mathbf{R}\rm Hom}\nolimits}
\def\uprhom{\mathop{\rm {\bf R}Hom}\nolimits}
\def\End{\mathop{\rm End}\nolimits}
\def\Ext{\mathop{\rm Ext}\nolimits}
\def\Tor{\mathop{\rm Tor}\nolimits}
\def\add{\mathop{\rm add}\nolimits}
\def\Aut{\mathop{\rm Auteq}\nolimits}
\def\Autgp{\mathop{\rm Autgp}\nolimits}
\def\Spec{\mathop{\rm Spec}\nolimits}
\def\sup{\mathop{\rm sup}\nolimits}
\def\Span{\mathop{\rm Span}\nolimits}
\def\Db{\mathop{\rm{D}^b}\nolimits}
\def\Kb{\mathop{\rm{K}^b}\nolimits}
\def\flop{{\sf{F}}}
\newcommand{\con}{\mathrm{con}}
\def\tilt{\mathop{\sf tilt}\nolimits}
\def\EG{\mathop{\sf EG}\nolimits}
\def\Fac{\mathop{\sf Fac}\nolimits}
\def\Begin{\mathop{\sf Begin}\nolimits}
\def\Ends{\mathop{\sf End}\nolimits}
\def\Path{\mathop{\sf Path}\nolimits}
\def\uprhom{{\rm{\bf R}Hom}}
\newcommand\fundgp{\uppi_{\hspace{0.5pt}1}\hspace{-0.5pt}}
\newcommand{\cA}{\mathcal{A}}
\newcommand{\cC}{\mathcal{C}}
\newcommand{\cD}{\mathcal{D}}
\newcommand{\cF}{\mathcal{F}}
\newcommand{\cG}{\mathcal{G}}
\newcommand{\cH}{\mathcal{H}}
\newcommand{\cS}{\mathcal{S}}
\newcommand{\cT}{\mathcal{T}}
\newcommand{\cV}{\mathcal{V}}
\newcommand{\cX}{\mathcal{X}}
\newcommand{\cY}{\mathcal{Y}}
\def\minus{\hbox{--}\kern 1pt}
\def\plus{\hbox{+}\kern 0.5pt}
\newcommand{\Per}{{}^{0}\mathrm{Per}}
\newcommand{\relmiddle}[1]{\mathrel{}\middle#1\mathrel{}}
\newcommand{\vin}{\rotatebox{90}{$\in$}}
\def\mapstofill@{%
   \arrowfill@{\mapstochar\relbar}\relbar\rightarrow}
\newcommand*\xmapsto[2][]{%
   \ext@arrow 0395\mapstofill@{#1}{#2}}
\begin{document}
\title{\textsc{Faithful Actions from Hyperplane Arrangements}}
\author{Yuki Hirano}
\address{Department of Mathematics, Kyoto University, Kitashirakawa-Oiwake-cho, Sakyo-ku, Kyoto, 606-8502, Japan.}
\email{y.hirano@math.kyoto-u.ac.jp}
\author{Michael Wemyss}
\address{Michael Wemyss, School of Mathematics and Statistics, University of Glasgow, 15 University Gardens, Glasgow, G12 8QW,
UK.}
\email{michael.wemyss@glasgow.ac.uk}
\begin{abstract}
We show that if $X$ is a smooth quasi-projective $3$-fold admitting a flopping contraction, then the fundamental group of an associated simplicial hyperplane arrangement acts faithfully on the derived category of $X$.  The main technical advance is to use torsion pairs as an efficient mechanism to track various objects under iterations of the flop functor (respectively, mutation functor).  This allows us to relate compositions of the flop functor (respectively, mutation functor) to the theory of Deligne normal form, and to give a criterion for when  a finite composition of $3$-fold flops can be understood as a tilt at a single torsion pair.  We also use this technique to give a simplified proof of Brav--Thomas \cite{BT} for Kleinian singularities. 
\end{abstract}

\subjclass[2010]{Primary 18E30; Secondary 14J30, 14E30, 14F05, 20F36}

\thanks{The first author was a Research Fellow for the Japan Society for the Promotion of Science, and was partially supported by Grant-in-Aid 26-6240. The second author was supported by EPSRC grant~EP/K021400/2.}
\maketitle
\parindent 20pt
\parskip 0pt

\section{Introduction}
Autoequivalence groups  of the bounded derived categories $\Db(\coh X)$ of coherent sheaves of  varieties $X$ have been studied in many articles. On one hand, Bondal and Orlov \cite{BO} proved that derived categories $\Db(\coh X)$ of smooth projective varieties $X$ with $K_X$ or $-K_X$ ample have only standard autoequivalences. On the other hand, Seidel and Thomas \cite{ST} showed that if $\uppi\colon X\rightarrow \mathbb{C}^2/G$ is a minimal resolution of  a quotient singularity $\mathbb{C}^2/G$ by a finite group $G\subset SL_2(\mathbb{C})$, then the derived category $\Db(\coh X)$ has non-standard autoequivalences, called {\it spherical twists}.  Across mirror symmetry, these correspond to autoequivalences of the derived Fukaya category of a homological mirror partner $X^{\vee}$ of $X$, which arises from generalized Dehn twists along Lagrangian spheres in $X^{\vee}$ \cite{ST}. More precisely, if $C\colonequals \uppi^{-1}(0)=\bigcup_{i=1}^n C_i$ with $C_i$ irreducible, Seidel--Thomas showed that the objects $\mathcal{O}_{C_i}(-1)[1]$ induce autoequivalences $t_i\in \Aut\Db(\coh X)$, and that these together induce a group homomorphism 
\[
\begin{array}{cccc}
\uprho\colon&B_{\Gamma}                     & \xrightarrow{\qquad } &        \Aut\Db(\coh X)       \\
&\vin &                 & \vin \\
&s_i                     & \xmapsto{\hphantom{\qquad}}     & t_i
\end{array}
\]
where   $B_{\Gamma}=\langle s_1,\hdots,s_n \rangle$ is the braid group of  the dual graph of exceptional curves $\bigcup_{i=1}^n C_i$ of $\uppi$, which is a Dynkin diagram of type ADE. Seidel--Thomas showed that  $\uprho$ is injective when $\Gamma$ is of type $A$, and later Brav--Thomas showed that $\uprho$ is injective in the general case \cite{BT}. This means that there is a faithful braid group action on $\Db(\coh X)$.

Moving up one dimension, if $X\to X_{\con}$ is a flopping contraction between quasi-projective $3$-folds, where $X$ is smooth and each of the $n$ irreducible exceptional curves is individually floppable, then \cite{Pinkham,HomMMP} associates to this data a real hyperplane arrangement $\cH\subseteq\mathbb{R}^n$, as a certain intersection in an ADE root system.  The main result of \cite{DW3} is that this induces an action of the fundamental group on the derived category; more precisely there exists a group homomorphism 
\[
\upvarphi\colon \fundgp(\mathbb{C}^n\backslash \cH_\mathbb{C})\to\Aut\Db(\coh X),
\]
where $\cH_{\mathbb{C}}$ denotes the complexification of the real hyperplane arrangement $\cH\subseteq\mathbb{R}^n$.  The group $\fundgp(\mathbb{C}^n\backslash \cH_\mathbb{C})$ should be viewed as a form of \emph{pure braid group}, since in the case $\cH$ is a Coxeter arrangement, this is precisely what it is.  However, in general, $\cH$ need not be Coxeter.  Motivated by the situation of surfaces above, and also by considerations in  Bridgeland stability conditions, in this paper we prove that $\upvarphi$ is injective, that is, the action is also faithful.

In fact, we do more, and our proof also recovers the surfaces case of \cite{BT} in a much simpler way.  Some of the techniques in \cite{BT} are not suited to the $3$-fold and more general settings, and so we are forced to develop a new approach. There are four main problems:

\begin{enumerate}
\item In the $3$-fold flops setting, the action $\upvarphi$ is obtained by iterating flops.  There is no `formula' for the flop functor, unlike for spherical twists, and so tracking objects under iterated flops is much more challenging.
\item The arrangement $\cH$ need not be Coxeter, so there is no finite Weyl group from which we can use reduced expressions of elements, or Garside normal form.  
\item Higher length braid relations exist, making it harder to induct on path length. 
\item There is no explicit presentation of $\fundgp(\mathbb{C}^n\backslash \cH_\mathbb{C})$ to work with.
\end{enumerate}

It turns out that these phenomena also exist for surfaces, but we need to go to \emph{partial} resolutions of Kleinian singularities in order to see them; most work to date only considers the minimal resolution.  This is addressed further in \cite{IW9}.

To obtain our main geometric results, we restrict to the formal fibre, and manipulate tilting modules there.  The following is our main result.

\begin{thm}[\ref{main groupoid faith}, \ref{main group faith}]\label{formal fibre intro}
Suppose that $f\colon X\to\Spec \mathfrak{R}$ is a complete local $3$-fold flopping contraction, where $X$ is smooth. Then the natural functor from the Deligne groupoid $\mathds{G}_\cH$ to the natural flops groupoid is faithful.  In particular, the induced group homomorphism
\[
\upvarphi\colon \fundgp(\mathbb{C}^n\backslash \cH_\mathbb{C})\to\Aut\Db(\coh X)
\]
is injective.
\end{thm}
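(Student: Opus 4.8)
The plan is to obtain injectivity of $\upvarphi$ from faithfulness of the natural functor from $\mathds{G}_\cH$ to the flops groupoid, and to prove that faithfulness by attaching to every morphism a torsion pair which remembers it. Since $\cH$ is simplicial, $\mathds{G}_\cH$ is connected, and its vertex group at the distinguished chamber $C_0$ corresponding to $X$ is $\fundgp(\mathbb{C}^n\backslash\cH_\mathbb{C})$; because the source is connected, a functor out of $\mathds{G}_\cH$ is faithful exactly when it is injective on this one vertex group, so the two assertions are equivalent. The functor itself is already available: the iterated flop functors satisfy the defining braid relations of $\mathds{G}_\cH$, which is where the higher-length relations of a non-Coxeter simplicial arrangement must be (and, in the earlier sections, are) accommodated. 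Thus everything reduces to showing that the isomorphism class of the composite flop functor $\Phi_g\colon\Db(\coh X_{C_0})\to\Db(\coh X_{C_1})$ attached to a morphism $g\in\Hom_{\mathds{G}_\cH}(C_0,C_1)$ determines $g$; here $X_{C_0}=X$ and, in general, $X_C$ is the flop of $X$ indexed by the chamber $C$.

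I would first move from geometry to algebra: a tilting equivalence replaces each $\Db(\coh X_C)$ by $\Db(\mod\Lambda_C)$, with the chambers of $\cH$ indexing the basic tilting $\Lambda_{C_0}$-modules reachable by iterated mutation, wall-crossings becoming mutations, and $\Phi_g$ becoming a standard derived equivalence $\Db(\mod\Lambda_{C_0})\to\Db(\mod\Lambda_{C_1})$. By the theory of Deligne normal form for a simplicial arrangement, any element $h$ of the vertex group $\Hom_{\mathds{G}_\cH}(C_0,C_0)=\fundgp(\mathbb{C}^n\backslash\cH_\mathbb{C})$ can be written $h=q^{-1}p$ with $p,q\colon C_0\to C_1$ \emph{positive} galleries; then $\upvarphi(h)=\Id$ forces $\Phi_p\cong\Phi_q$, and the whole problem is reduced to the claim that positive galleries $p,q\colon C_0\to C_1$ with $\Phi_p\cong\Phi_q$ satisfy $p=q$ in $\mathds{G}_\cH$.

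The heart of the argument is the torsion-pair bookkeeping. For a positive gallery $p\colon C_0\to C_1$ I would establish the ``single tilt'' criterion: the heart $\Phi_p^{-1}(\mod\Lambda_{C_1})$, regarded inside $\Db(\mod\Lambda_{C_0})$, is the tilt of $\mod\Lambda_{C_0}$ at a single torsion pair $(\mathcal{T}_p,\mathcal{F}_p)$; and, crucially, that $p\mapsto\mathcal{T}_p$ is monotone — appending a further positive wall-crossing only enlarges $\mathcal{T}_p$. Monotonicity is exactly what positivity buys and exactly what breaks for a general path, where the torsion class would shrink before it grows and the invariant would be lost; it is also the mechanism that lets one torsion pair faithfully track an arbitrarily long composition of flop functors, for which there is no formula. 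Granting this, $\Phi_p\cong\Phi_q$ forces $(\mathcal{T}_p,\mathcal{F}_p)=(\mathcal{T}_q,\mathcal{F}_q)$, the torsion pair being recovered from the functor. It then remains to reconstruct $p$ from $\mathcal{T}_p$ modulo the relations of $\mathds{G}_\cH$: the braid relations of $\mathds{G}_\cH$ equate the various minimal galleries between two fixed chambers and so leave $\mathcal{T}_p$ unchanged, while conversely one argues by induction on gallery length — the walls of $C_0$ detected by $\mathcal{T}_p$ are precisely the admissible first steps of $p$, taking any one of them replaces $(\mathcal{T}_p,\mathcal{F}_p)$ by the torsion pair of a strictly shorter positive gallery, and the braid relations guarantee that the gallery so reconstructed does not depend on the choices. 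This makes $p\mapsto\mathcal{T}_p$ injective on $\mathds{G}_\cH$ and finishes the proof.

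I expect the main obstacle to be the joint verification of the two halves of the previous paragraph. On the homological side, proving the single-tilt criterion and the monotonicity of $\mathcal{T}_p$ means controlling, across arbitrarily many mutations, exactly which $\Lambda_{C_0}$-modules land in the torsion class and which in the torsion-free class, with no explicit description of the mutation (flop) functor available; the whole point of the torsion-pair language is to propagate the stability of one subcategory rather than a lengthening list of individual objects. On the combinatorial side, because $\cH$ is only simplicial — no Weyl group, no reduced-word calculus, no Garside normal form — the reconstruction of $p$ from $\mathcal{T}_p$ must be carried out directly in the chamber geometry of $\cH$, and one has to check that the torsion-pair invariant sees exactly the relations of $\mathds{G}_\cH$, neither identifying distinct morphisms nor separating equal ones.
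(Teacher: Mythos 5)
Your reduction to positive paths and the equivalence between faithfulness of the groupoid functor and injectivity on a single vertex group are both correct and match the paper (\ref{deco}, \ref{faith to pos}, \ref{group is faithful}). The gap is in the heart of your argument: the ``single tilt criterion'' for an \emph{arbitrary} positive gallery $p$ is false, and with it the claim that a torsion pair on $\mod\Lambda_{C_0}$ can serve as a complete invariant of $p$. A tilt at a single torsion pair produces a heart whose objects have cohomology concentrated in degrees $-1$ and $0$; but already for the length-two positive path $p=s_i\circ s_i$ (across wall $i$ and straight back, a legitimate positive path from $C_0$ to $C_0$) one computes from \ref{Si under inverse} that $\Phi_p^{-1}(S_i)\cong S_i[2]$, so $\Phi_p^{-1}(\mod\Lambda_{C_0})$ contains objects with nonzero $H^{-2}$ and is not the tilt of $\mod\Lambda_{C_0}$ at any torsion pair. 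The same problem sinks the monotonicity bookkeeping: the torsion classes arising from chambers are in bijection with the finitely many elements of $\tilt_0\Lambda$, so a strictly growing $\cT_p$ cannot track positive paths of unbounded length, and for long paths there is simply no torsion pair to speak of. What is actually true --- and is the content of \ref{simplicial and functors ok} --- is the single-tilt statement for \emph{atoms} only, i.e.\ positive \emph{minimal} paths; likewise your observation that the torsion pair detects the admissible first and last steps is correct, but only for atoms (\ref{Key SY lemma}).

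The repair is the route the paper takes: decompose $p$ into its Deligne normal form $p=\upalpha_k\circ\cdots\circ\upalpha_1$ as a product of atoms, use the torsion pairs of the individual atomic factors only to record how each factor begins and ends, and replace the torsion pair of the total path by the graded invariant $[\cS,t_p\,b\kern 1pt]_*$ with $b=\Lambda$. By \ref{BT main} the top nonvanishing degree of this invariant is exactly $k+d$, which recovers the number of Deligne factors (and is, incidentally, direct proof that the composite is not a single tilt once $k>1$), while the simples with nonvanishing Ext in that top degree recover the last wall crossed by $\upalpha_k$; faithfulness then follows by peeling off that wall crossing and inducting on $k$. As written, your proposal needs either to restrict the single-tilt claim to atoms and supply a genuinely different invariant for their composites, or to abandon the torsion pair of the total path altogether.
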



This immediately gives global corollaries, such as the following.

\begin{cor}[\ref{main flop result}]\label{main flop result intro}
Suppose that $f\colon X\to X_{\con}$ is a flopping contraction between quasi-projective $3$-folds, where $X$ is smooth, and all curves in the contraction $f$ are individually floppable. Then there is an injective group homomorphism 
\[
\upvarphi\colon\fundgp(\mathbb{C}^n\backslash \cH_\mathbb{C})\to \Aut\Db(\coh X).
\]
\end{cor}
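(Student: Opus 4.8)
The plan is to reduce the global statement to the complete local one already established in Theorem~\ref{formal fibre intro}. First I would recall that for a flopping contraction $f\colon X\to X_{\con}$ between quasi-projective $3$-folds, the non-isomorphism locus of $f$ is a finite set of points $p_1,\dots,p_k\in X_{\con}$, and the action $\upvarphi$ of $\fundgp(\mathbb{C}^n\backslash\cH_{\mathbb{C}})$ is built, following \cite{DW3}, by composing flop functors at individual curves (or connected subconfigurations) of the exceptional fibre. Since flops, and hence the flop functors, are local over the base, the key point is that the hyperplane arrangement $\cH$ decomposes as a product $\cH = \cH_1\times\cdots\times\cH_k$ according to the decomposition of the exceptional fibre over the points $p_i$, and correspondingly $\fundgp(\mathbb{C}^n\backslash\cH_{\mathbb{C}})\cong\prod_{i=1}^k\fundgp(\mathbb{C}^{n_i}\backslash(\cH_i)_{\mathbb{C}})$. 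The flop functors associated to curves lying over distinct points $p_i$ and $p_j$ are supported on disjoint open subsets and hence commute, so $\upvarphi$ respects this product decomposition. It therefore suffices to treat each factor separately, i.e.\ to assume the exceptional fibre is connected and lies over a single point $p\in X_{\con}$.

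Next I would pass to the complete local ring. Write $\mathfrak{R}\colonequals\widehat{\mathcal{O}}_{X_{\con},p}$ and let $X_{\mathfrak{R}}\to\Spec\mathfrak{R}$ be the base change of $f$; this is a complete local $3$-fold flopping contraction with $X_{\mathfrak{R}}$ smooth, and the associated hyperplane arrangement is the same $\cH$ (the arrangement depends only on the formal neighbourhood of the exceptional fibre, by the construction in \cite{Pinkham,HomMMP}). By Theorem~\ref{formal fibre intro}, the induced homomorphism
\[
\upvarphi_{\mathfrak{R}}\colon\fundgp(\mathbb{C}^n\backslash\cH_{\mathbb{C}})\to\Aut\Db(\coh X_{\mathfrak{R}})
\]
is injective. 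The remaining task is to compare $\upvarphi$ with $\upvarphi_{\mathfrak{R}}$: completion along the fibre gives a functor $\Db(\coh X)\to\Db(\coh X_{\mathfrak{R}})$ which intertwines the global flop functors with their complete local counterparts, so there is a commutative triangle in which $\upvarphi = (\text{restriction})\circ\upvarphi_{\mathfrak{R}}$ in an appropriate sense — more precisely, if $g\in\fundgp(\mathbb{C}^n\backslash\cH_{\mathbb{C}})$ satisfies $\upvarphi(g)\cong\Id$, then its completion $\upvarphi_{\mathfrak{R}}(g)$ acts as the identity on all objects supported on the exceptional fibre, and one argues (as in \cite{DW3}) that this forces $\upvarphi_{\mathfrak{R}}(g)\cong\Id$, whence $g=1$ by injectivity of $\upvarphi_{\mathfrak{R}}$.

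The main obstacle I anticipate is the last comparison step: showing that triviality of the \emph{global} autoequivalence $\upvarphi(g)$ genuinely forces triviality of the \emph{complete local} one $\upvarphi_{\mathfrak{R}}(g)$, rather than merely triviality on some subcategory. The point is that an autoequivalence built from flop functors is determined by its effect on objects set-theoretically supported on the exceptional locus — indeed the flop functors are the identity away from that locus — so a global triviality statement can be tested, and detected, after completion. Making this precise requires the compatibility of the flop functor with completion (a base-change statement for the relevant tilting bimodules / Fourier–Mukai kernels) together with the fact that the relevant objects $\mathcal{O}_{C_i}(-1)$ and their mutations are already supported on the fibre; both are available from \cite{DW3,HomMMP}. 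Once this is in hand, injectivity of $\upvarphi$ follows immediately from the complete local case, completing the proof.
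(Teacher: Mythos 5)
Your proposal takes essentially the same route as the paper: the paper's entire proof is the observation, citing \cite[6.2]{DW3}, that the functors in the image of $\upvarphi$ fix skyscraper sheaves away from the flopping curves, so that any relation $\upvarphi(g)\cong\Id$ is detected on the formal fibre, where faithfulness is exactly \ref{main group faith}. The additional care you take over the product decomposition of $\cH$ when the exceptional curves lie over several points, and over the base-change compatibility of the flop kernels under completion, is precisely the content the paper delegates to \cite{DW3}, so there is no gap.
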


There is a similar statement for when the curves are not individually floppable, but  being slightly  more technical to state, we refer the reader to \ref{main flop result arb curves}.  We also recover in Appendix~\ref{BT appendix} a simplified version of Brav--Thomas in the case of minimal resolutions of Kleinian singularities.

The main technical engine in the proof is to use the order on tilting modules to control iterations.  Our new main technical result is the following, which here we state slightly vaguely, leaving details to \S\ref{Compositions section}.

\begin{thm}[\ref{simplicial and functors ok}]\label{comp main intro}
With the assumptions in \ref{formal fibre intro}, suppose that $\upalpha\colon C\to D$ is a positive minimal path.  Then the composition of mutation functors along this path is functorially isomorphic to a single functor induced by a tilting module.
\end{thm}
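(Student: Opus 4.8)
\emph{Step 1: reduction to a single tilting complex.} The plan is to translate the statement into the language of tilting modules over the contraction algebra. Fix the base chamber $C$, let $\Lambda$ be the associated $\mathfrak{R}$-algebra (an NCCR), and recall from the HomMMP set-up that the chambers of $\cH$ biject with a distinguished class of tilting $\Lambda$-modules, a single codimension-one wall crossing $E\to E'$ corresponding to a single mutation, with mutation functor $\RHom(T_{E,E'},-)$ for an honest tilting module $T_{E,E'}$ concentrated in degree $0$. Consequently the composite of mutation functors along $\upalpha\colon C\to D$ has the form $\RHom_\Lambda(V,-)$, where $V$ is a tilting \emph{complex} over $\Lambda$ obtained as an iterated derived tensor product of the single-step tilting modules along $\upalpha$. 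The whole content of the theorem is then: (a) $V$ is concentrated in a single cohomological degree, so (after the evident normalisation) it is a tilting module; and (b) that module is the one $T_D$ attached to $D$, independently of the chosen minimal representative of $\upalpha$.

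\emph{Step 2: torsion pairs as the tracking device.} To a tilting module $T$ obtained from $\Lambda$ by a positive chain of mutations one attaches the torsion pair $(\mathcal T_T,\mathcal F_T)$ it generates in $\mod\Lambda$, together with the associated HRS-tilted heart $\mathcal H_T\subseteq\Db(\Lambda)$; under $\RHom_\Lambda(T,-)$ this heart is carried to the standard heart $\mod\End(T)$. Composing two mutation functors thus amounts to composing two HRS tilts of hearts, and the abstract input supplied by \S\ref{Compositions section} is a criterion for when a finite composite of such tilts is \emph{again a single HRS tilt} of $\mod\Lambda$ --- concretely, that the intermediate hearts remain inside the two-term window $\langle\mod\Lambda,\mod\Lambda[1]\rangle$, equivalently that the torsion classes in play stay suitably nested. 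When the criterion holds, $\mathcal H_D$ is a single HRS tilt of $\mod\Lambda$, which in this setting is realised by an honest $\pd\leq 1$ tilting module sitting in degree $0$; this is exactly (a), and it simultaneously identifies $V$ with $T_D$, giving (b). Steps 1 and 2 are then essentially bookkeeping on top of the HomMMP machinery and this torsion-pair criterion.

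\emph{Step 3: verifying the criterion along a positive minimal path.} This is where the simplicial structure of $\cH$ genuinely enters, and it is where I expect the real difficulty to lie. Argue by induction on the length of $\upalpha$, peeling off the first wall crossing $C\to E_1$: this is a single positive mutation, so $\mathcal H_{E_1}$ is a \emph{minimal} positive tilt of $\mod\Lambda$ (its torsion-free part controlled by the single simple $\Lambda$-module dual to the crossed wall), and the truncated path $E_1\to D$ is again positive and minimal. Using the dictionary between walls of $\cH$ and simple $\Lambda$-modules --- lying on the positive side of a wall translating into a membership statement for the corresponding simple in the torsion or torsion-free class --- one shows that, because a positive minimal path crosses exactly the walls separating $C$ from $D$, each exactly once and in the positive direction, the torsion pairs assemble coherently and the two-term window of Step 2 is preserved throughout. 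Both hypotheses are essential: a wall crossed and then re-crossed would force cohomology in degree $-1$ (and the shift attached to an inverse mutation), leaving the window, so \emph{minimality} is what forbids this; and \emph{positivity} is what prevents inverse mutations, whose tilting objects are shifted, from entering at all.

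\emph{The main obstacle.} The crux is Step 3: marrying Deligne's combinatorics of positive minimal paths in a (possibly non-Coxeter) simplicial arrangement with the homological behaviour of the torsion pairs, so as to certify that the two-term window is never left. A subsidiary technical point there is \emph{independence of the chosen minimal representative}: since ``peel off the first wall'' depends on an ordering of the separating walls, one must invoke that minimal positive paths between two chambers of a simplicial arrangement are unique up to positive equivalence, together with compatibility of the mutation functors with the elementary positive equivalences, so that the induction is well posed and the output is genuinely $T_D$. Granting these, identifying the degree-$0$ object with $T_D$, and the passage from the complete-local statement to the quasi-projective corollaries, are then formal.
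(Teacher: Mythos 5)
Your skeleton is right and you have correctly located the crux: everything reduces to showing that along an atom every single wall crossing is a \emph{descending} mutation in the tilting order (equivalently, that the torsion classes are nested, $\Fac\Lambda\supseteq\Fac T_2\supseteq\hdots$), after which the degree-zero concentration of the iterated derived tensor product and the identification with the direct bimodule $\Hom_{\mathfrak{R}}(M,M_{m+1})$ follow (this is \ref{flow big to small}, which you assume rather than prove, but it is routine). The genuine gap is that your Step 3 asserts this descent rather than proving it. The ``dictionary between walls of $\cH$ and simple $\Lambda$-modules'' you invoke --- positive side of a wall translating into membership of the corresponding simple in the torsion or torsion-free class --- is, in this paper, \ref{Key SY lemma}, whose proof rests on \ref{cohom}, which is itself a corollary of the theorem you are trying to prove; used as you propose, the argument is circular. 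What is actually needed, and what the paper supplies, is an \emph{independent} bridge between the combinatorics of $\cH_\Lambda$ and the tilting order: the $K$-theoretic chambers $C_T\subseteq\Uptheta_\Lambda$ spanned by the classes of the summands of $T$ (\S\ref{subsection tilting chambers}), the criterion of \ref{DIJ lemmaB} that $T>U$ if and only if $C_T$ lies on the same side of the separating hyperplane as $[\Lambda]$, and \ref{2.14} (an atom crosses no hyperplane twice) to certify that the base chamber stays on the correct side at each step. Even then the induction only closes because one simultaneously proves $C_S=D_S$, i.e.\ that the $K$-theoretic chamber of the new tilting module coincides with the corresponding chamber of the arrangement --- and that identification is not formal: it uses the moduli-tracking results of \cite{HomMMP}. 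None of this machinery appears in your proposal, so the implication ``minimal path $\Rightarrow$ two-term window preserved'' is left unproved.

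A smaller point: your worry about independence of the chosen minimal representative is not an input to the proof but an output. One fixes an arbitrary decomposition of the atom into simple wall crossings, runs the induction for that decomposition, and concludes that the composite is $\RHom_{\Lambda_C}(\Hom_{\mathfrak{R}}(M,M_{m+1}),-)$, which depends only on the endpoints; hence all decompositions give functorially isomorphic composites (see \ref{any atom will do}). There is no need to appeal separately to uniqueness of minimal positive paths up to elementary equivalences.
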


Since tilting modules induce torsion pairs, this allows us to use torsion pairs to control iterations.  Applying this to $3$-fold flops, where by \cite{HomMMP} the flop functor is isomorphic to the inverse of the mutation functor, gives the following result.  The first part is implicit in \cite{DW3},  whereas the second part is new, and may be of independent interest.
 
\begin{thm}[\ref{single tilt perverse}]\label{single tilt perverse intro}
Consider two crepant resolutions 
\[
\begin{tikzpicture}
\node (X) at (-1,1) {$X$};
\node (Y) at (1,1) {$Y$};
\node (R) at (0,0) {$\Spec R$};
\draw[->] (X)--(R);
\draw[->] (Y)--(R);
\end{tikzpicture}
\]
of $\Spec R$, where $R$ is an isolated cDV singularity.
\begin{enumerate}
\item Given two minimal chains of flops connecting $X$ and $Y$, the composition of flop functors associated to each chain are functorially isomorphic.
\item Perverse sheaves on $Y$, namely $\Per(Y,R)$, can be obtained from perverse sheaves on $X$, namely $\Per(X,R)$, by a single tilt at a torsion pair.
\end{enumerate}
\end{thm}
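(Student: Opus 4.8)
The plan is to transport both statements to the tilting-theoretic picture on the formal fibre and then invoke Theorems~\ref{formal fibre intro} and~\ref{comp main intro}. Throughout I write $\Lambda_X$ and $\Lambda_Y$ for the (finite global dimension, NCCR) endomorphism algebras $\End_R(N_X)$ and $\End_R(N_Y)$ attached to $X$ and $Y$ by \cite{HomMMP}, so that there are derived equivalences $\Db(\coh X)\cong\Db(\Lambda_X)$ and $\Db(\coh Y)\cong\Db(\Lambda_Y)$ carrying $\Per(X,R)$ and $\Per(Y,R)$ to the standard hearts $\mod\Lambda_X$ and $\mod\Lambda_Y$; after completing at the maximal ideal of $R$ the complete-local hypotheses of Theorems~\ref{formal fibre intro} and~\ref{comp main intro} are met, and the equivalences and torsion pairs involved descend.

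\emph{Part (1).} By \cite{HomMMP} a single flop is a wall crossing between adjacent chambers of $\cH$, and the flop functor is the inverse of the corresponding mutation functor, so a minimal chain of flops connecting $X$ and $Y$ is precisely a minimal gallery in $\cH$ from the chamber $C_X$ to the chamber $C_Y$, that is, a positive minimal path $C_X\to C_Y$ in the Deligne groupoid $\mathds{G}_\cH$. Any two minimal galleries with the same source and target represent one and the same morphism of $\mathds{G}_\cH$ --- they differ by a sequence of the codimension-two ``braid'' relations that define $\mathds{G}_\cH$ --- so under the natural (well-defined) functor from $\mathds{G}_\cH$ to the flops groupoid of \cite{DW3} they have equal image, and together with flop $=$ (mutation)$^{-1}$ this is the assertion. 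Equivalently one may quote Theorem~\ref{comp main intro}: each composition of mutation functors along such a path is isomorphic to the single functor induced by the tilting module attached to the target chamber $C_Y$, which does not depend on the chosen gallery. Note that only well-definedness of the functor to the flops groupoid, not its faithfulness from Theorem~\ref{formal fibre intro}, is used, which is why (1) is implicit in \cite{DW3}.

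\emph{Part (2).} Fix one minimal chain of flops and let $\Psi\colon\Db(\coh X)\to\Db(\coh Y)$ be its composite flop functor, well-defined up to isomorphism by (1); it suffices to prove that the heart $\Psi^{-1}\bigl(\Per(Y,R)\bigr)\subseteq\Db(\coh X)$ is a single tilt of $\Per(X,R)$. Transporting $\Psi$ across the equivalences above and applying Theorem~\ref{comp main intro} together with flop $=$ (mutation)$^{-1}$ (up to replacing functors by their inverses, which does not affect the argument since reversing a minimal gallery is again a minimal gallery) identifies $\Psi$, up to isomorphism, with a tilting functor $\RHom_{\Lambda_X}(T,-)\colon\Db(\Lambda_X)\to\Db(\Lambda_Y)$ for a tilting $\Lambda_X$-module $T$ with $\End_{\Lambda_X}(T)\cong\Lambda_Y$; let $\cB\subseteq\Db(\Lambda_X)$ be the heart corresponding under this functor to $\mod\Lambda_Y$. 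If $\pd_{\Lambda_X}T\le 1$, then classical Brenner--Butler tilting theory produces the torsion pair $(\cT,\cF)=(\mathrm{Gen}(T),\{N:\Hom_{\Lambda_X}(T,N)=0\})$ in $\mod\Lambda_X$ and identifies $\cB$ with the single tilt of $\mod\Lambda_X$ at $(\cT,\cF)$; pulling back along $\Db(\coh X)\cong\Db(\Lambda_X)$ then exhibits $\Per(Y,R)$ as a single tilt of $\Per(X,R)$, as required.

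The remaining --- and main --- obstacle is the inequality $\pd_{\Lambda_X}T\le 1$, equivalently that $\cB$ lies between $\mod\Lambda_X[1]$ and $\mod\Lambda_X$ in the standard order on bounded hearts (the Happel--Reiten--Smal\o{} criterion for a single tilt). Each individual mutation functor is induced by a tilting module of projective dimension at most one \cite{HomMMP}, but a composite of several single tilts need not be a single tilt, so this step requires the torsion-pair bookkeeping underlying Theorem~\ref{comp main intro}: because the chosen chain is a \emph{minimal} gallery, every hyperplane of $\cH$ separating $C_X$ from $C_Y$ is crossed exactly once and no other hyperplane is crossed, so the associated torsion class moves monotonically and never ``overshoots'', which keeps $\cB$ within a single shift of $\mod\Lambda_X$ and forces $\pd_{\Lambda_X}T\le 1$. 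Alternatively, on the geometric side, for an isolated cDV singularity one can argue directly that every crepant resolution's perverse heart lies in the interval $[\Per(X,R)[1],\Per(X,R)]$, since the relevant portion of the movable cone is tiled by finitely many nef chambers meeting across single flops; either route completes the proof.
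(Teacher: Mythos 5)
Your part (1) is the paper's argument: two minimal galleries define the same morphism of $\mathds{G}_\cH$ by Deligne, and the codimension-two relations are satisfied by the flop functors by \cite{DW3}, so the images under the (well-defined) functor to the flops groupoid agree. For part (2) you take a genuinely more roundabout route than the paper, and the one step you flag as ``the main obstacle'' is where your write-up is soft. The paper does not pass through the composite flop functor at all: it simply sets $T\colonequals\Hom_R(M,N)$, where $\End_R(M)$ and $\End_R(N)$ are the Van den Bergh algebras of $X$ and $Y$, quotes \cite[4.17]{IW4} to get that $T$ is a \emph{classical} tilting module (so $\pd_{\End_R(M)}T\le 1$ is automatic --- this holds for \emph{any} two crepant resolutions, with no minimal chain and no gallery combinatorics needed), and then applies Brenner--Butler to produce the torsion pair and the single tilt. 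Your heuristic that minimality of the gallery forces the heart to stay within one shift (``the torsion class moves monotonically and never overshoots'') is not a proof as written: iterated single tilts can in general leave the interval $[\cA[1],\cA]$, and the precise statement needed is exactly Theorem~\ref{comp main intro} (i.e.\ \ref{simplicial and functors ok}), whose conclusion already hands you a classical tilting module of projective dimension at most one --- so the ``obstacle'' is resolved by the theorem you cite, not by the extra paragraph of cone/gallery heuristics, and your alternative geometric argument via nef chambers is likewise only a sketch. In short: the result is correct and your scaffolding works once Theorem~\ref{comp main intro} is invoked properly, but the paper's proof of (2) is shorter and strictly more general, since it never uses the minimal chain.
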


For definitions, we refer the reader to \S\ref{geo cor sect}.

\subsection{Outline of Paper} \S\ref{prelim} contains background on hyperplane arrangements, arrangement groupoids and Deligne Normal Form.  In \S\ref{Tilting Section} we then relate this to tilting modules, under the general setting that we will consider.  So as not to disturb the flow of the paper, proofs of some of the results in \S\ref{Tilting Section} appear in Appendix~\ref{tilting appendix}.  In \S\ref{Compositions section}
 we establish in \ref{simplicial and functors ok} that compositions of tilts behave well under Deligne Normal Form, and the first consequences appear in the short \S\ref{tracking torsion pairs}.  In \S\ref{faithful proof section} we use this torsion pair viewpoint to prove the faithfulness in the complete local setting, and we give all the geometric corollaries.  In Appendix~\ref{BT appendix}, which can be read independently, we give a simple direct proof of faithfulness in the case of Kleinian singularities, to demonstrate that the torsion pair viewpoint simplifies the proof.

\subsection{Acknowledgements} 
The second author would like to thank Osamu Iyama for discussions related to the tilting theory in Appendix~\ref{tilting appendix}.  Both authors would like to thank the referee for their patience, and for their helpful comments. The majority of this work was carried out when the first author visited the Universities of Edinburgh and Glasgow during 2015/16, funded by the JSPS.  We thank the JSPS, and also the universities for their hospitality.

\subsection{Conventions}\label{conventions}  
All rings and algebras are assumed to be noetherian, and to be $k$-algebras, where $k$ is some field.   All modules are right modules, unless stated otherwise. When considering flopping contractions, the base field is assumed to be algebraically closed of characteristic zero. Throughout:
\begin{itemize}
\item For a triangulated category $\cC$, and $a,b\in\cC$, to match \cite{BT} we write 
\[
[a,b\kern 1pt]_t\colonequals \Hom_{\cC}(a,b[t]).
\] 

\item For an algebra $\Lambda$, we write $\fl \Lambda$ for the category of finite length right $\Lambda$-modules. 

\item For a noetherian ring $R$, $\CM R$ denotes the category of maximal Cohen-Macaulay $R$-modules, and $\refl R$ denotes the category of finitely generated reflexive $R$-modules.

\item For an additive category $\cC$, and an object $x\in \cC$, we write $\add x\subset \cC$ for the full subcategory consisting of direct summands of finite direct sums of $x$.

\end{itemize}

\section{Preliminaries}\label{prelim}

\subsection{Hyperplane Arrangements}

Throughout this subsection $\cH$ will denote a finite set of hyperplanes in $\mathbb{R}^n$, which we will refer to as a \emph{real hyperplane arrangement}.  Such an arrangement is called \emph{Coxeter} if it arises as the set of reflection hyperplanes of a finite real reflection group.

Recall that $\cH$ is {\it simplicial} if $\bigcap_{H\in\mathcal{H}}H=\{0\}$ and all chambers in $\mathbb{R}^n\backslash \cH$ are open simplicial cones. All Coxeter arrangements are simplicial, but the converse is false.  When $\cH$ is simplicial, we will write
\[
\cH_{\mathbb{C}}\colonequals \bigcup_{H\in\cH} H_{\mathbb{C}},
\]
where $H_{\mathbb{C}}$ denotes the complexification of $H$.  The fundamental object of interest to us is the fundamental group $\fundgp(\mathbb{C}^{n}\backslash {\cH}_\mathbb{C})$ and, as is standard, to access this combinatorially we will use the Deligne  groupoid in the next subsection.

\begin{remark}
When $\cH$ is Coxeter, it is well-known that $\fundgp(\mathbb{C}^{n}\backslash {\cH}_\mathbb{C})$ is the \emph{pure braid group} associated to the corresponding finite Coxeter group, that is, the kernel of the natural morphism from the braid group to the Weyl group.  When the arrangement is simplicial but not Coxeter, there is no such  description in terms of a kernel.
\end{remark}

When $\cH$ is a simplicial hyperplane arrangement, its $1$-skeleton is defined to be the graph with vertices corresponding to the chambers, and edges joining chambers which share a codimension one wall.

\begin{example}\label{Ex:32hyper}
As an example, consider the following hyperplane arrangement $\cH$ in $\mathbb{R}^3$, and its $1$-skeleton.  It has $7$ hyperplanes,  $32$ chambers, and is not Coxeter:
\smallskip
\[
\begin{array}{ccccccc}
\begin{array}{c}
\begin{tikzpicture}[scale=1]
\rotateRPY{-5}{-15}{5}
\begin{scope}[RPY]
\draw [->,densely dotted] (0,0) -- (1,0,0) node [right] {$\upvartheta_2$}; 
\draw [->,densely dotted] (0,0) -- (0,1,0) node [above] {$\upvartheta_3$}; 
\draw [->,densely dotted] (0,0) -- (0,0,1);
\node at (0,0,1.3) {$\upvartheta_1$}; 
\end{scope}
\end{tikzpicture}
\end{array}
&&
\begin{array}{c}
\begin{tikzpicture}[scale=1.2]
\rotateRPY{-0}{-20}{0}
\begin{scope}[RPY]
\filldraw[gray!70] (-1,-1,1) -- (-1,1,-1) -- (1,-1,1) -- (-1,-1,1);
\filldraw[red!80!black!70] (-1,1,1) -- (-1,-1,1) -- (0,0,0) -- cycle;
\filldraw[green!40!black!70] (-1,0,1) -- (0,0,1) -- (0,0,0)-- cycle;
\filldraw[green!60!black!70] (0,-1,1) -- (0,0,1) -- (0,0,0)-- cycle;
\filldraw[blue!70] (-1,1,1) -- (-1,1,-1) -- (1,-1,-1) -- (1,-1,1);
\filldraw[red!80!black!70] (1,1,-1) -- (1,-1,-1) -- (0,0,0) -- cycle;
\filldraw[green!40!black!70] (1,0,-1) -- (1,0,0) -- (0,0,0)-- cycle;
\filldraw[green!50!black!70] (1,-1,0) -- (1,0,0) -- (0,0,0)-- cycle;
\filldraw[yellow!95!black!70] (1,-1,0) -- (1,0,-1) -- (1,0,0)-- cycle;
\filldraw[yellow!95!black!70] (-1,0,1) -- (0,-1,1) -- (0,0,1)-- cycle;
\filldraw[gray!70] (-1,1,-1) -- (1,1,-1) -- (1,-1,1)--(-1,1,-1);
\filldraw[yellow!95!black!70] (-1,1,0) -- (0,1,-1) -- (0,0,0)--(-1,1,0);
\filldraw[green!50!black!70] (-1,1,0) -- (0,0,0) -- (0,1,0) -- cycle;
\filldraw[green!60!black!70] (0,1,-1)--(0,0,0) -- (0,1,0) -- cycle;
\filldraw[red!80!black!70] (-1,1,1) -- (1,1,-1) -- (0,0,0)-- cycle;
\filldraw[green!60!black!70] (0,1,1) -- (0,0,1) -- (0,0,0)-- (0,1,0)-- cycle;
\filldraw[green!50!black!70] (1,1,0) -- (1,0,0) -- (0,0,0)-- (0,1,0)-- cycle;
\filldraw[green!40!black!70] (1,0,1) -- (1,0,0) -- (0,0,0)-- (0,0,1)-- cycle;
\node (A) at (1,0.6,1) [DWs] {};
\node (B1) at (-0.6,0.7,0.6) [DWs] {};
\node (B2) at (-1,0.9,0) [DWs] {};
\node (C1) at (0.6,0.7,-0.7) [DWs] {};
\node (C2) at (0,0.9,-1) [DWs] {};
\node (D) at (-1,0.7,-1.1) [DWs] {};
\node (E) at (-1,0.9,-1) [DWs] {};
\node (F) at (1,-0.3,1) [DWs] {};
\node (F1) at (0.3,-0.6,1) [DWs] {};
\node (F2) at (-0.3,-0.3,1) [DWs] {};
\node (J) at (-0.8,-0.8,0.9) [DWs] {};
\node (F3) at (-0.6,0.3,1) [DWs] {};
\node (G1) at (1,-0.6,0.3) [DWs] {};
\node (G2) at (1,-0.3,-0.3) [DWs] {};
\node (H) at (0.9,-0.8,-0.8) [DWs] {};
\node (G3) at (1,0.3,-0.6) [DWs] {};
\draw (A)--(B1)--(B2)--(D)--(E);
\draw (A)--(C1)--(C2)--(D);
\draw (E) -- (-1,0.82,-1.15); 
\draw (E) -- (-1.1,0.83,-1); 
\draw (B2) -- (-1.1,0.83,0);
\draw (C2) -- (0.1,0.85,-1.15);
\draw (A)--(F)--(F1)--(F2)--(F3)--(B1);
\draw (F)--(G1)--(G2)--(G3)--(C1);
\draw (G2)--(H);
\draw (F2)--(J);
\draw (G3)--(1,0.4,-0.7);
\draw (H)--(0.85,-0.7,-1);
\draw (H)--(1,-0.9,-0.4);
\draw (G1)--(0.9,-0.9,0.5);
\draw (F1)--(0.5,-0.9,0.9);
\draw (F3)--(-0.75,0.4,0.9);
\draw (J)--(-0.9,-0.65,0.9);
\draw (J)--(-0.65,-0.95,0.9);
\end{scope}
\end{tikzpicture}
\end{array}
&&&
\begin{array}{l}
\upvartheta_1=0\\
\upvartheta_2=0\\
\upvartheta_3=0\\
\upvartheta_1+\upvartheta_2=0\\
\upvartheta_1+\upvartheta_3=0\\
\upvartheta_2+\upvartheta_3=0\\
\upvartheta_1+\upvartheta_2+\upvartheta_3=0
\end{array}
\end{array}
\]
This hyperplane arrangement appears for $cD_4$ singularities with three curves meeting at a point \cite[7.4]{HomMMP}; an explicit example of such a $cD_4$ singularity can be found in \cite[11.2.19]{CS}. 
\end{example}

\subsection{The Deligne Groupoid}
In this section we summarise some known combinatorial approaches to $\fundgp(\mathbb{C}^{n}\backslash {\cH}_\mathbb{C})$.  For more detailed references, see \cite{Paris,Paris2,Deligne}.

Recall that a groupoid is a small category $\cG$ such that for any two objects $g,h\in\cG$, the set of morphisms $\Hom(g,h)$ is non-empty and further all morphisms are invertible. We recall that a hyperplane arrangement $\cH$ in $\mathbb{R}^n$ induces a groupoid $\mathds{G}_{\cH}$ called the \emph{arrangement groupoid} (or \emph{Deligne groupoid}) of $\cH$. To define this, we first associate an oriented graph $\Gamma_{\cH}$ to the hyperplane arrangement $\cH$.

\begin{defin}
The vertices of $\Gamma_{\cH}$ are the chambers (i.e.\ the connected components) of $\mathbb{R}^n \backslash \bigcup_{H\in\cH}H$. There is an arrow $a\colon v_1\to v_2$ from chamber $v_1$ to chamber $v_2$ if the chambers are adjacent, otherwise there is no arrow. For an arrow $a\colon v_1\to v_2$, we set $s(a)\colonequals v_1$ and $t(a)\colonequals v_2$.
\end{defin}

\begin{example}\label{Ex 8 chambers 1}
Consider the following hyperplane arrangement $\cH$ in~$\mathbb{R}^2$, and its associated $\Gamma_{\cH}$.  We have labelled the arrows in $\Gamma_{\cH}$ by abuse of notation.
\vspace{-1em}
\[
\begin{array}{c}
\begin{tikzpicture}
\node at (6.5,0) {$\begin{tikzpicture}[scale=0.75,>=stealth]
\coordinate (A1) at (135:2cm);
\coordinate (A2) at (-45:2cm);
\coordinate (B1) at (153.435:2cm);
\coordinate (B2) at (-26.565:2cm);
\coordinate (C1) at (161.565:2cm);
\coordinate (C2) at (-18.435:2cm);
\draw[red] (A1) -- (A2);
\draw[green!70!black] (B1) -- (B2);
\draw (-2,0)--(2,0);
\draw (0,-2)--(0,2);
\end{tikzpicture}$};
\node at (3.75,0) {
$
\begin{array}{cl}
\\
&\upvartheta_1\\
&\upvartheta_2\\
\begin{array}{c}\begin{tikzpicture}\node at (0,0){}; \draw[red] (0,0)--(0.5,0);\end{tikzpicture}\end{array}&\upvartheta_1+\upvartheta_2\\
\begin{array}{c}\begin{tikzpicture}\node at (0,0){}; \draw[green!70!black] (0,0)--(0.5,0);\end{tikzpicture}\end{array}&\upvartheta_1+2\upvartheta_2\\
\end{array}
$};
\node at (12,0) 
{\begin{tikzpicture}[scale=1,bend angle=15, looseness=1,>=stealth]
\coordinate (A1) at (135:2cm);
\coordinate (A2) at (-45:2cm);
\coordinate (B1) at (153.435:2cm);
\coordinate (B2) at (-26.565:2cm);
\draw[red!30] (A1) -- (A2);
\draw[green!70!black!30] (B1) -- (B2);
\draw[black!30] (-2,0)--(2,0);
\draw[black!30] (0,-2)--(0,2);
\node (C+) at (45:1.5cm) [DWs] {};
\node (C1) at (112.5:1.5cm) [DWs] {};
\node (C2) at (145:1.5cm) [DWs] {};
\node (C3) at (167.5:1.5cm) [DWs] {};
\node (C-) at (225:1.5cm) [DWs] {};
\node (C4) at (-67.5:1.5cm) [DWs] {};
\node (C5) at (-35:1.5cm) [DWs] {};
\node (C6) at (-13:1.5cm) [DWs] {};
\draw[->, bend right]  (C+) to (C1);
\draw[->, bend right]  (C1) to (C+);
\draw[->, bend right]  (C1) to (C2);
\draw[->, bend right]  (C2) to (C1);
\draw[->, bend right]  (C2) to (C3);
\draw[->, bend right]  (C3) to (C2);
\draw[->, bend right]  (C3) to (C-);
\draw[->, bend right]  (C-) to  (C3);
\draw[<-, bend right]  (C+) to  (C6);
\draw[<-, bend right]  (C6) to  (C+);
\draw[<-, bend right]  (C6) to  (C5);
\draw[<-, bend right]  (C5) to (C6);
\draw[<-, bend right]  (C5) to  (C4);
\draw[<-, bend right]  (C4) to (C5);
\draw[<-, bend right]  (C4) to  (C-);
\draw[<-, bend right]  (C-) to (C4);
\node at (78.75:1cm) {$\scriptstyle s_1$};
\node at (78.75:1.5cm) {$\scriptstyle s_1$};
\node at (127:1.225cm) {$\scriptstyle s_2$};
\node at (127:1.675cm) {$\scriptstyle s_2$};
\node at (157:1.25cm) {$\scriptstyle s_1$};
\node at (157:1.675cm) {$\scriptstyle s_1$};
\node at (198:1.025cm) {$\scriptstyle s_2$};
\node at (198:1.6cm) {$\scriptstyle s_2$};
\node at (258.75:1cm) {$\scriptstyle s_1$};
\node at (258.75:1.5cm) {$\scriptstyle s_1$};
\node at (307:1.225cm) {$\scriptstyle s_2$};
\node at (307:1.675cm) {$\scriptstyle s_2$};
\node at (337:1.25cm) {$\scriptstyle s_1$};
\node at (337:1.675cm) {$\scriptstyle s_1$};
\node at (378:1.025cm) {$\scriptstyle s_2$};
\node at (378:1.6cm) {$\scriptstyle s_2$};
\end{tikzpicture}};
\end{tikzpicture}
\end{array}
\]
\end{example}

A \emph{positive path of length~$n$} in $\Gamma_{\cH}$ is defined to be a formal symbol
\[
p=a_n\circ \hdots\circ a_2\circ a_1,
\]
 whenever there exists a sequence of vertices $v_0,\hdots,v_n$ of $\Gamma_{\cH}$ and exist arrows $a_i\colon v_{i-1}\to v_i$ in $\Gamma_{\cH}$. We define $s(p)\colonequals v_0$, $t(p)\colonequals v_n$, and $\ell(p)\colonequals n$. The notation $\circ$ should remind us of composition, but we will often drop the $\circ$'s in future sections.  If 
$q=b_m\circ\hdots\circ b_2 \circ b_1$is another positive path with $t(p)=s(q)$, we consider the formal symbol
\[
q\circ p\colonequals b_m\circ\hdots\circ b_2 \circ b_1
\circ
a_n\circ \hdots\circ a_2\circ a_1,
\]
and call it the {\it composition} of $p$ and $q$. As usual, there are paths  of length zero at each vertex $v$, and by abuse of notation we will also denote the length zero path at $v$ by $v$, and identify the compositions $t(p)\circ p$ and $p\circ s(p)$ with $p$.

\begin{defin}
A positive path is called \emph{minimal} if there is no positive path in $\Gamma_{\cH}$ of smaller length, and with the same endpoints.   The positive minimal paths are called \emph{atoms}.
\end{defin}

\begin{example}\label{atoms from C plus}
In \ref{Ex 8 chambers 1}, the following are all the atoms starting in the chamber $C_+$.
\[
\begin{tikzpicture}[scale=0.6,bend angle=15, looseness=1,>=stealth]
\coordinate (A1) at (135:2.25cm);
\coordinate (A2) at (-45:2.25cm);
\coordinate (B1) at (153.435:2.225cm);
\coordinate (B2) at (-26.565:2.25cm);
\draw[red!30] (A1) -- (A2);
\draw[green!70!black!30] (B1) -- (B2);
\draw[black!30] (-2.25,0)--(2.25,0);
\draw[black!30] (0,-2.25)--(0,2.25);
\draw[->] ([shift=(45:1.4cm)]0,0) arc (45:109.5:1.4cm);
\draw[->] ([shift=(45:1.55cm)]0,0) arc (45:142.5:1.55cm);
\draw[->] ([shift=(45:1.7cm)]0,0) arc (45:165:1.7cm);
\draw[->] ([shift=(45:1.85cm)]0,0) arc (45:222.5:1.85cm);
\draw[->] ([shift=(45:1.4cm)]0,0) arc (45:-10.5:1.4cm);
\draw[->] ([shift=(45:1.55cm)]0,0) arc (45:-32.5:1.55cm);
\draw[->] ([shift=(45:1.7cm)]0,0) arc (45:-65:1.7cm);
\draw[->] ([shift=(45:1.85cm)]0,0) arc (45:-132.5:1.85cm);
\filldraw[fill=white] (43:1.35cm) -- (43:1.9cm) -- (47:1.9cm) -- (48:1.35cm) -- cycle;
\node at (42:2.35cm) {$\scriptstyle C_+$};
\node (C1) at (112.5:1.4cm) [DWs] {};
\node (C2) at (145:1.55cm) [DWs] {};
\node (C3) at (167.5:1.7cm) [DWs] {};
\node (C-) at (225:1.85cm) [DWs] {};
\node (C4) at (-67.5:1.7cm) [DWs] {};
\node (C5) at (-35:1.55cm) [DWs] {};
\node (C6) at (-13:1.4cm) [DWs] {};
\end{tikzpicture}
\]
For each choice of start chamber, there is a similar picture.
\end{example}

Following \cite[p170]{Paris},  there is an equivalence relation $\sim$ on the set of paths in $\Gamma_{\mathcal{H}}$, defined as the smallest equivalence relation such that the following conditions are satisfied: 
\begin{enumerate}
\item If $p\sim q$, then $s(p)=s(q)$ and $t(p)=t(q)$.
\item If $p$ and $q$ are atoms with the same source and targets, then $p\sim q$.
\item If $p\sim q$, then $upr\sim uqr$ for all positive paths $u$ and $r$ satisfying $t(r)=s(p)=s(q)$, and $s(u)=t(p)=t(q)$, .
\end{enumerate}
Write $\Path\Gamma_{\cH}$ for the set of equivalence classes of positive paths in $\Gamma_{\cH}$ with respect to the equivalence relation $\sim$, and write $[p]$ for the equivalence class of a positive path $p$.
\begin{defin} 
When $\cH$ is a simplicial hyperplane arrangement, write $\mathds{G}_{\mathcal{H}}^{+}$ for the category whose objects are the vertices  in $\Gamma_{\cH}$, and whose morphisms are defined
\[
\Hom_{\mathds{G}^{+}_{\cH}}(v,u)\colonequals \{[p]\in \Path\Gamma_{\cH}\mid s(p)=v \mbox{ and } t(p)=u\}.
\]
The \emph{Deligne groupoid} (or the \emph{arrangement groupoid}) $\mathds{G}_{\mathcal{H}}$ is the groupoid defined as the groupoid completion of  $\mathds{G}_{\mathcal{H}}^{+}$, that is, adding formal inverses of all morphisms in $\mathds{G}_{\mathcal{H}}^{+}$ (see e.g.\ \cite[\S2.3.1]{Priyavrat}).
\end{defin} 
In future sections, we will abuse notation, and refer to $[\upalpha]\in\Path\Gamma_{\cH}$ simply by $\upalpha$, with the equivalence relation being implicit. The following is well-known by \cite{Deligne, Paris, Paris3, Salvetti} (see also \cite[2.1]{Paris2}), and is our main reason for considering the Deligne groupoid.

\begin{thm}\label{ver gp}
If $\cH$ is simplicial, any vertex group of the groupoid $\mathds{G}_{\cH}$ defined above is isomorphic to $\fundgp(\mathbb{C}^n\backslash \cH_\mathbb{C})$. 
\end{thm}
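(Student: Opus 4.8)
The plan is to reduce the statement to the classical theorem of Deligne (for $K(\pi,1)$-ness of simplicial arrangement complements) combined with the combinatorial model of the fundamental group via the Salvetti complex, as reorganized by Paris. First I would recall that for a simplicial arrangement $\cH$ the complement $M(\cH_\mathbb{C})\colonequals\mathbb{C}^n\backslash\cH_\mathbb{C}$ deformation retracts onto the Salvetti complex $\mathrm{Sal}(\cH)$, a regular CW-complex whose $1$-skeleton is (after choosing an orientation of each edge) essentially the graph $\Gamma_\cH$ of adjacent chambers. Thus $\fundgp(M(\cH_\mathbb{C}),C_0)$ is isomorphic to the fundamental group of $\mathrm{Sal}(\cH)$ based at the vertex $C_0$. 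This is exactly the content invoked by citing \cite{Salvetti, Paris3}.

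The second step is to identify $\pi_1(\mathrm{Sal}(\cH), C_0)$ with the vertex group $\Hom_{\mathds{G}_\cH}(C_0,C_0)$ of the Deligne groupoid. Here I would use the presentation of the fundamental groupoid of a $2$-dimensional CW-complex in terms of its $1$-skeleton modulo the relations coming from the attaching maps of $2$-cells. The $2$-cells of $\mathrm{Sal}(\cH)$ correspond to the rank-$2$ flats (codimension-two intersections of hyperplanes), and the boundary of the $2$-cell attached at such a flat, read off in the edges of $\Gamma_\cH$, is precisely a relation equating the two minimal positive paths between a pair of "opposite" chambers in the corresponding rank-$2$ localization; higher cells impose no new relations on $\pi_1$. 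These are exactly relations (1)--(3) defining $\sim$: relation (2) is the rank-$2$ (local braid) relation, and relations (1) and (3) are the bookkeeping that makes $\sim$ a congruence on the path category. Hence the path category $\mathds{G}^+_\cH$, after inverting all arrows, has vertex groups isomorphic to $\pi_1(\mathrm{Sal}(\cH))$, i.e. to $\fundgp(M(\cH_\mathbb{C}))$. Since any two objects of a connected groupoid have isomorphic vertex groups, and $\mathds{G}_\cH$ is connected (any two chambers are joined by a gallery), the choice of basepoint is immaterial.

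The one genuinely nontrivial input — and the step I expect to be the main obstacle if one wanted a self-contained argument — is that the relation $\sim$, which is generated purely by the \emph{rank-two} braid relations among atoms, already suffices to present $\pi_1$: a priori the $2$-skeleton of $\mathrm{Sal}(\cH)$ only controls $\pi_1$ once one knows the local relations generate all relations. For general simplicial (non-Coxeter) arrangements this is precisely Deligne's theorem that the complement is aspherical together with Paris's analysis \cite{Paris,Paris2,Paris3} showing the monoid $\mathds{G}^+_\cH$ embeds in the groupoid and that the groupoid computes $\pi_1$; I would simply quote these, since reproving them is far outside the scope here. So the proof is essentially: Salvetti complex $\Rightarrow$ CW-presentation of $\pi_1$ $\Rightarrow$ this presentation is exactly $\mathds{G}_\cH$, with the hard asphericity/normal-form facts cited from \cite{Deligne, Paris, Paris3, Salvetti, Paris2}.
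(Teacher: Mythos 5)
Your proposal is consistent with the paper, which offers no proof of this statement at all beyond citing \cite{Deligne, Paris, Paris3, Salvetti} and \cite[2.1]{Paris2}; your sketch of how those references combine (Salvetti complex, CW-presentation of $\fundgp$, and Deligne/Paris for the fact that the rank-two relations suffice) is an accurate account of what those citations contain, and you correctly isolate the genuinely hard input. So this is essentially the same (citation-based) approach, expanded with useful detail.
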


\subsection{Faithfulness} The faithfulness of the action of $\fundgp(\mathbb{C}^n\backslash \cH_\mathbb{C})$ on $\Db(\coh X)$ will follow from a more general faithful result on groupoids, which we briefly outline here.

\begin{defin}[{\cite[Section 1]{Deligne}}]
Assume that  $\mathcal{H}$ is simplicial. Let $v_i$ and $v_j$ be  vertices in $\Gamma_{\mathcal{H}}$, and let $C_i$ and $C_j$ be the corresponding chambers of $\mathbb{R}^n\setminus\bigcup_{H\in\mathcal{H}}H$.  Then we say that $v_j$ is {\it opposite}  to $v_i$ if there is a line $l$ in $\mathbb{R}^n$ passing through $C_i$, $C_j$, and the origin. An opposite vertex of $v$ is unique, and we denote it by $-v$. 
\end{defin}

\begin{lemma}\label{deco} 
Assume that  $\mathcal{H}$ is simplicial.  
\begin{enumerate}
\item\label{deco 1} For any atom $p$ in $\Gamma_{\mathcal{H}}$, there is an atom $p'$ such that $s(p')=-t(p)$, $t(p')=s(p)$, and  the composition $pp'$ is also an atom.
\item\label{deco 2} Let $a$ and $b$ be two atoms in $\Gamma_{\mathcal{H}}$ such that $t(a)=t(b)$. Then there are atoms $p$ and $q$ such that   $b^{-1} a= q {p}^{-1}$ in $\Hom_{\mathds{G}_{\cH}}(s(a),s(b))$.
\end{enumerate}
\end{lemma}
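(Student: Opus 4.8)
The plan is to deduce both statements from the combinatorial structure of the Deligne groupoid, using the notion of opposite vertices together with the basic exchange properties of atoms. For part~\eqref{deco 1}, the key observation is that for any chamber $C=s(p)$ there is a ``long'' atom, i.e.\ a minimal positive path, from $C$ to the opposite chamber $-C$: since $-C$ is obtained from $C$ by reflecting through the origin, every hyperplane in $\cH$ separates $C$ from $-C$, so a minimal gallery from $C$ to $-C$ crosses every hyperplane exactly once, and this is the maximal possible length of an atom out of $C$. I would first fix such a maximal atom $m\colon s(p)\to -s(p)$ extending $p$; that is, I claim $p$ can be completed to a minimal gallery all the way to $-s(p)$. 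This uses the simpliciality of $\cH$ and the standard fact (Deligne~\cite{Deligne}, or Paris~\cite{Paris}) that a positive path is an atom iff it crosses each hyperplane at most once, together with the fact that the set of hyperplanes crossed by $p$ can always be enlarged one hyperplane at a time until it is everything. Writing $m = m'' \circ p$ with $m''$ an atom from $t(p)$ to $-s(p)$, I then set $p' \colonequals (-m'')$, the image of $m''$ under the antipodal map on chambers; antipodality is a graph automorphism of $\Gamma_\cH$ preserving atoms, so $p'$ is an atom from $-t(p)$ to $-(-s(p)) = s(p)$. Finally, $pp'$ is an atom because $p$ crosses exactly the hyperplanes not crossed by $m''$, while $p' = -m''$ crosses exactly those crossed by $m''$ (the antipodal map fixes each hyperplane setwise), so $pp'$ crosses every hyperplane exactly once and hence is a maximal, in particular minimal, path from $-t(p)$ to $-s(p)$.

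For part~\eqref{deco 2}, given atoms $a,b$ with $t(a)=t(b)$, I want to rewrite $b^{-1}a$ in the groupoid as $q p^{-1}$ with $p,q$ atoms. Apply part~\eqref{deco 1} to $a$: there is an atom $a'$ with $s(a')=-t(a)$, $t(a')=s(a)$, and $aa'$ an atom. Then in $\mathds{G}_\cH$ we have $a = (aa')(a')^{-1}$, and since $aa'$ and $a'$ have the same \emph{source} $-t(a)=-t(b)$, the composite $a a'$ is an atom from $-t(a)$ to $t(a)$ — in fact, by the crossing-number description, $aa'$ and $ba'$ are both atoms from $-t(a)$ to $t(a)$ (here I use $t(a)=t(b)$, and that $a'$ completes \emph{any} atom into $-t(a)$ to a maximal one, so the same $a'$ works for $b$). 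Now $b^{-1} a = b^{-1}(aa')(a')^{-1}$; I then replace $b^{-1}(aa')$ using the atoms-into-the-same-target relation~(2) defining $\sim$: any two atoms with the same source and target are equal in the groupoid, so $b^{-1}(aa')$ can be turned around. More precisely, since $aa'$ and the reversed-then-reattached path have matching endpoints, one arranges $b^{-1}(aa') = q \cdot (\text{something})$; the cleanest route is: by \eqref{deco 1} applied to $b$ there is an atom $b'$ with $bb'$ an atom and the \emph{same} source and target as $aa'$, hence $[aa'] = [bb']$ in $\mathds{G}_\cH$, giving $b^{-1} a = b^{-1}(bb')(b')^{-1}(a')^{-1}\cdot a' = \dots$; I would set $q \colonequals$ the atom $b'$ viewed appropriately and $p \colonequals$ the atom $a'$, after checking endpoints: $s(p) = s(a') = -t(a) = -t(b) = s(b') = s(q)$, and $t(p) = s(a)$, $t(q)=s(b)$, so $q p^{-1} \in \Hom_{\mathds{G}_\cH}(s(a),s(b))$ as required, and $qp^{-1} = b^{-1} a$ follows from $[aa']=[bb']$.

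The main obstacle I anticipate is the bookkeeping in part~\eqref{deco 2}: making sure the \emph{same} completing atom (or at least atoms equal in the groupoid) can be used on the $a$-side and the $b$-side, which is exactly where the hypothesis $t(a)=t(b)$ and the crossing-hyperplane characterization of atoms must be combined with the relation~(2) that all atoms with common endpoints are identified. Establishing cleanly that a partial atom extends to a maximal one crossing every hyperplane — and that the antipodal map is a hyperplane-preserving automorphism of $\Gamma_\cH$ sending atoms to atoms — is the technical heart of~\eqref{deco 1}, and everything else is formal manipulation in the groupoid $\mathds{G}_\cH$ using Theorem~\ref{ver gp} and the defining relations of $\sim$. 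I would cite \cite{Deligne} and \cite{Paris} for the crossing-number description of atoms rather than reprove it.
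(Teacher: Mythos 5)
Your proposal is correct. For part (\ref{deco 1}) the paper simply cites Paris (Section 4, Corollary 2), whereas you sketch a direct proof: extend $p$ to an atom from $s(p)$ to the opposite chamber $-s(p)$, and apply the antipodal map. This works: for \emph{any} atom $m''\colon t(p)\to -s(p)$, the hyperplanes crossed by $p$ and those crossed by $m''$ are the separating sets $\mathrm{sep}(s(p),t(p))$ and $\mathrm{sep}(t(p),-s(p))$, which partition $\cH$ since every hyperplane separates $s(p)$ from $-s(p)$; hence $m''\circ p$ crosses each hyperplane exactly once and is an atom by \ref{2.14}. The antipodal map is an automorphism of $\Gamma_{\cH}$ fixing each hyperplane setwise, so $p'\colonequals -m''$ is an atom from $-t(p)$ to $s(p)$ crossing exactly the complement of the hyperplanes crossed by $p$, and $pp'$ is an atom, again by \ref{2.14}. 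This is a sound self-contained argument and is essentially the content of the result the paper cites. Your part (\ref{deco 2}) ends up being exactly the paper's proof: apply (\ref{deco 1}) to both $a$ and $b$ to obtain atoms $p=a'$ and $q=b'$ with common source $-t(a)=-t(b)$ such that $ap$ and $bq$ are atoms with the same source and target, hence $ap\sim bq$, hence $b^{-1}a=qp^{-1}$. One small correction: your parenthetical claim that ``$aa'$ and $ba'$ are both atoms'' does not typecheck, since $ba'$ is composable only if $s(b)=t(a')=s(a)$, which is not assumed; but you discard this in favour of the route through $b'$, so the final argument is unaffected.
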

\begin{proof}
(1) This follows from \cite[Section 4, Corollary 2]{Paris}.\\
(2) By \eqref{deco 1}, there are atoms $p$ and $q$ such that $s(p)=s(q)=-t(a)$, and  $ap$ and $bq$ are atoms. Since the targets and sources of $ap$ and $bq$ are equal, we have $ap\sim bq$. This implies $b^{-1}\circ a= q\circ p^{-1}$ in $\Hom_{\mathds{G}_{\cH}}(s(a),s(b))$.
\end{proof}

Since $\mathds{G}_{\mathcal{H}}$ is obtained from $\mathds{G}_{\mathcal{H}}^{+}$ by adding inverses,  there is a natural functor 
\[
\iota\colon \mathds{G}_{\mathcal{H}}^{+}\longrightarrow \mathds{G}_{\mathcal{H}}.
\]
The following lemma is an easy analogue of \cite[Lemma 2.3]{BT}, and relies on the fact that $\iota$ is faithful for simplicial $\cH$.

\begin{lemma}\label{faith to pos}
Assume that $\mathcal{H}$ is simplicial, and let $F\colon\mathds{G}_{\mathcal{H}}\rightarrow \mathcal{G}$ be a functor between groupoids. Then  $F$ is faithful if and only if $F\circ \iota\colon\mathds{G}_{\mathcal{H}}^{+}\rightarrow \mathcal{G}$ is faithful.
\end{lemma}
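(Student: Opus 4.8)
The plan is to handle the two implications separately, the forward one being immediate and the backward one carrying the content.

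For the forward implication, suppose $F$ is faithful. Since $\cH$ is simplicial, the canonical functor $\iota\colon\mathds{G}_{\cH}^{+}\to\mathds{G}_{\cH}$ is faithful, and a composition of faithful functors is faithful; hence $F\circ\iota$ is faithful. (Note this is the step where the hypothesis that $\iota$ is faithful for simplicial $\cH$ is used directly.)

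For the backward implication, assume $F\circ\iota$ is faithful; I want to deduce that $F$ is faithful. First I would reduce to loops: given $g,h\in\Hom_{\mathds{G}_{\cH}}(v,u)$ with $F(g)=F(h)$, the morphism $h^{-1}\circ g$ is a loop at $v$ with $F(h^{-1}\circ g)=\mathrm{id}_{Fv}$, so it suffices to prove that every loop $\ell$ at an arbitrary vertex $v$ satisfying $F(\ell)=\mathrm{id}_{Fv}$ is the identity. Next I would invoke the structural fact that every morphism of $\mathds{G}_{\cH}$ can be written as $\iota(\upbeta)\circ\iota(\upalpha)^{-1}$ for positive paths $\upalpha,\upbeta$ in $\Gamma_{\cH}$ with a common source $w$; applied to the loop $\ell$ this produces such $\upalpha,\upbeta$, both with target $v$. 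This is exactly where simpliciality re-enters: it follows by repeatedly applying Lemma~\ref{deco} to push all the inverses to one side — rewriting a left fraction $b^{-1}\circ a$ of arrows as a right fraction $q\circ p^{-1}$ of atoms — together with the Deligne normal form theory of \cite{Deligne,Paris}.

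Granting this normal form, the remainder is purely formal. From $\mathrm{id}_{Fv}=F(\ell)=F\iota(\upbeta)\circ F\iota(\upalpha)^{-1}$, composing on the right with the (invertible, since $\mathcal{G}$ is a groupoid) morphism $F\iota(\upalpha)$ gives $F\iota(\upbeta)=F\iota(\upalpha)$ in $\Hom_{\mathcal{G}}(Fw,Fv)$. Since $\upalpha,\upbeta\in\Hom_{\mathds{G}_{\cH}^{+}}(w,v)$ and $F\circ\iota$ is faithful, we get $\upalpha=\upbeta$ in $\mathds{G}_{\cH}^{+}$; applying $\iota$ yields $\iota(\upalpha)=\iota(\upbeta)$, whence $\ell=\iota(\upbeta)\circ\iota(\upalpha)^{-1}=\iota(\upalpha)\circ\iota(\upalpha)^{-1}=\mathrm{id}_v$, as required. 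The single non-formal ingredient is the reduction of an arbitrary morphism of $\mathds{G}_{\cH}$ to one fraction $\iota(\upbeta)\circ\iota(\upalpha)^{-1}$ of positive paths: this is the step I expect to take the most care to state and justify precisely, and it is where the Deligne--Paris structure theory for simplicial arrangements (underpinned by the faithfulness of $\iota$) is really being used. Everything else mirrors the argument of \cite[Lemma~2.3]{BT}.
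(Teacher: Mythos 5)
Your proposal is correct and follows essentially the same route as the paper: both directions hinge on the faithfulness of $\iota$ for simplicial $\cH$, the reduction to loops, and the repeated use of Lemma~\ref{deco}\eqref{deco 2} to write an arbitrary morphism of $\mathds{G}_{\cH}$ as a single fraction of positive paths, after which faithfulness of $F\circ\iota$ forces the two positive paths to coincide. The only cosmetic difference is the order in which you perform the reduction to loops versus the fraction decomposition.
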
 
\begin{proof}
By \cite{Deligne}, $\iota\colon\mathds{G}_{\mathcal{H}}^{+}\longrightarrow \mathds{G}_{\mathcal{H}}$ is faithful. Thus it immediately follows that if  $F$ is faithful, so is $F\circ \iota\colon \mathds{G}_{\mathcal{H}}^{+}\rightarrow \mathcal{G}$.

For the other direction, assume that $F\circ \iota\colon\mathds{G}_{\mathcal{H}}^{+}\rightarrow \mathcal{G}$ is faithful, and let $p,q\in{\rm Hom}_{\mathds{G}_{\mathcal{H}}}(v,w)$ be morphisms. It is enough to show that if $F(p)=F(q)$ then $p=q$. 
At first, we consider the case when $v=w$. In this case, it is enough to show that, if $F(p)={\rm id}_{F(v)}$, then $p={\rm id}_{v}$.  By repeated use of \ref{deco}\eqref{deco 2}, there are positive paths $p_1$ and $p_2$ such that $p= p_1\circ{p_2}^{-1}$. Since we have  $F(p_1)=F(p_2)$ and $F\circ \iota$ is faithful, necessarily  $p_1= p_2$ and so $p= p_1\circ{p_2}^{-1}= {\rm id}_{v}$.
Next, we consider the general case when $F(p)=F(q)$. Then we have  $F(pq^{-1})={\rm id}_{F(v_j)}$. By the above argument, we see that $pq^{-1}= {\rm id}_{v}$, and thus $p=q$.
\end{proof}

\begin{cor}\label{group is faithful}
Assume that $\mathcal{H}$ is simplicial, $F\colon\mathds{G}_{\mathcal{H}}\rightarrow \mathcal{G}$ is a functor between groupoids, and for any chamber $C$ write $\Autgp(FC)\colonequals \Hom_{\mathcal{G}}(FC,FC)$.  If $F$ is a faithful functor, then there is an injective group homomorphism
\[
\fundgp(\mathbb{C}^n\backslash \cH_\mathbb{C})\to\Autgp(FC).
\]
\end{cor}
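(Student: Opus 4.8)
The statement is essentially the observation that a faithful functor of groupoids restricts to an injection on each vertex group, combined with the identification of the vertex group from Theorem \ref{ver gp}. So the plan is short. First I would fix a chamber $C$, regarded as an object of $\mathds{G}_\cH$, and consider the restriction of $F$ to the vertex group $\Hom_{\mathds{G}_\cH}(C,C)$. Since $F$ is a functor of groupoids, it sends the identity to the identity and respects composition and inverses, so the assignment $g\mapsto F(g)$ defines a group homomorphism $\Hom_{\mathds{G}_\cH}(C,C)\to \Hom_{\mathcal G}(FC,FC)=\Autgp(FC)$. Faithfulness of $F$ says precisely that $F$ is injective on each hom-set, in particular on $\Hom_{\mathds{G}_\cH}(C,C)$, so this group homomorphism is injective.

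Next I would invoke Theorem \ref{ver gp}: since $\cH$ is simplicial, the vertex group $\Hom_{\mathds{G}_\cH}(C,C)$ is isomorphic to $\fundgp(\mathbb{C}^n\backslash \cH_\mathbb{C})$. Composing the chosen isomorphism with the injective homomorphism above yields the desired injective group homomorphism $\fundgp(\mathbb{C}^n\backslash \cH_\mathbb{C})\to \Autgp(FC)$. One small point worth a remark: the isomorphism class of the vertex group does not depend on the chosen chamber $C$ (all objects of a groupoid are isomorphic, hence have isomorphic automorphism groups), so the statement is independent of which $C$ one picks, though the resulting homomorphism does depend on $C$.

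There is no real obstacle here; the only thing to be a little careful about is to cite Theorem \ref{ver gp} for the identification of the vertex group rather than attempting to reprove it, and to phrase everything at the level of the groupoid $\mathds{G}_\cH$ itself (not $\mathds{G}_\cH^+$), which is legitimate because $\Autgp(FC)$ is taken in the target groupoid $\mathcal G$ and $F$ is given as a functor out of $\mathds{G}_\cH$. If one instead only had faithfulness of $F\circ\iota$ on $\mathds{G}_\cH^+$, one would route through Lemma \ref{faith to pos} first to upgrade to faithfulness of $F$, but since the hypothesis is already faithfulness of $F$ on all of $\mathds{G}_\cH$, that step is unnecessary.
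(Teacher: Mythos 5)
Your proposal is correct and follows exactly the paper's own argument: faithfulness of $F$ gives an injective group homomorphism $\Hom_{\mathds{G}_{\mathcal{H}}}(C,C)\to\Autgp(FC)$, and Theorem \ref{ver gp} identifies the vertex group with $\fundgp(\mathbb{C}^n\backslash \cH_\mathbb{C})$. Nothing is missing.
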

\begin{proof}
If $F$ is faithful, the induced group homomorphism $F\colon \Hom_{\mathds{G}_{\mathcal{H}}}(C,C)\rightarrow \Autgp(FC)$ is injective for any chamber $C\in\mathds{G}_{\mathcal{H}}$. Since $\Hom_{\mathds{G}_{\mathcal{H}}}(C,C)$ is isomorphic to $\fundgp(\mathbb{C}^n\backslash \cH_\mathbb{C})$ by \ref{ver gp}, the result holds.
\end{proof}

\subsection{Deligne normal form}\label{DNF section}
By \ref{faith to pos} and \ref{group is faithful} our problem will reduce to proving the faithfulness of a \emph{positive part} of a groupoid action.  This is a significant reduction in complexity, since every positive path has a \emph{Deligne normal form}, which we recall here.  This normal form replaces the Garside normal form in \cite{BT}, which is only defined for Coxeter arrangements.  The proof of faithfulness will simply induct on the number of factors of this normal form.

For positive paths $p,q\in \Gamma_{\cH}$ with $s(p)=s(q)$, we say that $p$ {\it begins} with $q$ if there exists a positive path $r$ such that  $s(r)=t(q)$, $t(r)=t(p)$ and $p\sim rq$. For a positive path $p$, write $\Begin(p)$ for the set of all atoms with which $p$ begins.  Similarly, we can consider the set of atoms with which $p$ ends, which is defined in the analogous way, and we denote this set by $\Ends(p)$.

\begin{defin}
For any path $p\in\Gamma_{\cH}$, by \cite[2.2]{Paris2} (or \cite{Deligne}), there exists a unique (up to equivalence) atom $\upalpha_1$ such that $\Begin(p)=\Begin(\upalpha_1)$. Then, in particular,  $p$ begins with $\upalpha_1$, and  so there is a positive path $\upbeta$ with $s(\upbeta)=t(\upalpha_1)$ and $t(\upbeta)=t(p)$ such that 
\[
p\sim \upbeta\circ\upalpha_1.
\] 
  Continuing this process with $\upbeta$, we decompose $p$ into atoms
\[
p\sim\upalpha_n\circ\hdots\circ \upalpha_2\circ \upalpha_1,
\]
which we refer to as the {\rm Deligne normal form of $p$}.
\end{defin}

The following lemma is convenient, and is well known \cite[Lemma 4.2]{Paris}.
\begin{lemma}\label{2.14}
If $p\in\Gamma_{\cH}$, then $p$ is an atom if and only if $p$ does not cross any hyperplane twice.
\end{lemma}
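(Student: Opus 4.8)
The statement to prove is Lemma 2.14: \emph{$p$ is an atom if and only if $p$ does not cross any hyperplane twice.} Here ``crossing a hyperplane'' should be interpreted combinatorially: a positive path $p=a_n\circ\dots\circ a_1$ crosses the hyperplane $H$ at step $i$ if the chambers $s(a_i)$ and $t(a_i)$ lie on opposite sides of $H$, and $p$ ``crosses $H$ twice'' if there are two distinct such indices.

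\textbf{Plan.} I would prove the contrapositive in one direction and a direct argument in the other, both via the standard distance/separation function on chambers. For a chamber $C$, let $\mathrm{Sep}(C_0,C)\subseteq\cH$ denote the set of hyperplanes separating a fixed base chamber from $C$; more usefully, for an edge $a\colon v_1\to v_2$ there is a unique hyperplane $H_a$ that $a$ crosses (adjacent chambers share a codimension-one wall lying on a single hyperplane). The key combinatorial fact, standard for simplicial arrangements and essentially Deligne's gallery-distance lemma, is: for any two chambers $v,w$, the length of a \emph{minimal} positive path from $v$ to $w$ equals $\#\mathrm{Sep}(v,w)$, the number of hyperplanes separating them, and moreover a positive path $p$ from $v$ to $w$ is minimal (an atom) if and only if the hyperplanes $H_{a_1},\dots,H_{a_n}$ crossed by its successive edges are pairwise distinct — equivalently, $p$ crosses no hyperplane twice, and the set of crossed hyperplanes is exactly $\mathrm{Sep}(v,w)$.

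\textbf{Key steps.} First, the easy direction: if $p$ crosses some hyperplane $H$ twice, say at steps $i<j$, then I would show $p$ is not minimal. Between the first and second crossing of $H$, the path leaves the halfspace bounded by $H$ containing $s(a_i)$, wanders, and returns; one excises this and produces a strictly shorter positive path with the same endpoints by a ``folding'' argument — crossing $H$ twice is wasteful because the net effect on which side of $H$ you are on is zero, yet it costs two steps. Concretely, I would invoke that the minimal length is $\#\mathrm{Sep}(v,w)$ and that any positive path must cross each separating hyperplane at least once; a path of length $n$ that crosses some hyperplane twice crosses at most $n-2$ distinct hyperplanes, hence at most $n-2<n$ separating ones... that alone does not finish it, so more carefully: a positive path crossing $H$ at steps $i<j$ with no crossing of $H$ strictly between (take $i,j$ consecutive such crossings) has the subpath from $t(a_i)$ to $s(a_j)$ entirely on one side of $H$; reflecting this subpath across $H$, or rather deleting $a_i$ and $a_j$ and replacing the middle by its mirror, yields a positive path of length $n-2$ from $v$ to $w$ (this reflection trick uses simpliciality to stay within the chamber complex). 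Hence $p$ is not an atom. Second, the converse: if $p$ is \emph{not} an atom, then there is a shorter positive path $q$ from $v$ to $w$; since every positive path crosses every hyperplane in $\mathrm{Sep}(v,w)$ at least once, $\ell(q)\geq\#\mathrm{Sep}(v,w)$, so $\ell(p)>\#\mathrm{Sep}(v,w)$, meaning $p$ crosses strictly more than $\#\mathrm{Sep}(v,w)$ hyperplane-steps. But a positive path can cross a hyperplane \emph{not} in $\mathrm{Sep}(v,w)$ only an even number of times (net side-change is zero) and a hyperplane in $\mathrm{Sep}(v,w)$ an odd number of times; so if all crossings were of distinct hyperplanes we'd need every crossed hyperplane to be separating and crossed exactly once, forcing $\ell(p)=\#\mathrm{Sep}(v,w)$, a contradiction. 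Therefore $p$ crosses some hyperplane twice.

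\textbf{Main obstacle.} The genuine content is the reflection/folding argument that turns ``crosses $H$ twice'' into a length reduction while staying inside the groupoid $\Gamma_\cH$ — i.e.\ producing an honest positive path, not just a shorter gallery in an abstract sense — and the dual fact that minimal length equals separation number. Both are standard for simplicial arrangements (Deligne \cite{Deligne}, and \cite{Paris}) but require the simpliciality hypothesis in an essential way (the reflection of a gallery across a hyperplane need not be a gallery for non-simplicial arrangements). Since the paper cites \cite[Lemma 4.2]{Paris} for this statement, in practice I would simply record these two facts as consequences of the cited gallery-combinatorics in \cite{Paris, Deligne} and assemble them as above, with the parity observation (separating hyperplanes crossed an odd number of times, others an even number) doing the bookkeeping that converts the length comparison into the ``twice-crossed'' conclusion.
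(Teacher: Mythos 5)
The paper offers no proof of this lemma at all: it is stated as ``well known'' with a citation to \cite[Lemma 4.2]{Paris}, so there is no internal argument to compare yours against. Your sketch is the standard one, and your parity bookkeeping is correct and in fact does all the work in both directions. Since each edge of $\Gamma_{\cH}$ crosses exactly one hyperplane, $\ell(p)$ is the total number of crossings; a positive path from $v$ to $w$ crosses each separating hyperplane an odd number of times and each non-separating one an even number of times, so $\ell(p)\geq\#\mathrm{Sep}(v,w)$ with equality precisely when no hyperplane is crossed twice. Combined with the existence of \emph{some} positive path of length $\#\mathrm{Sep}(v,w)$ --- obtained from a generic straight-line segment joining interior points of the two chambers, which meets each separating hyperplane once and no others --- this gives both implications at once, with no folding needed. (Note also that this part of the argument does not use simpliciality; it holds for any real arrangement.)

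The one step that would fail as literally written is the ``reflection trick'': for a general simplicial arrangement, reflection in a hyperplane $H\in\cH$ is not an automorphism of the arrangement, so the mirror image of the subgallery between two consecutive crossings of $H$ need not be a gallery at all. The honest excision argument in \cite{Paris, Deligne} is more delicate (it uses the local structure of the arrangement near the walls, not a global reflection). You correctly flag this as the obstacle, but you should simply delete that step: the counting argument you begin with (``a path of length $n$ crossing some hyperplane at least twice crosses at most $n-1$ distinct hyperplanes, hence $\#\mathrm{Sep}(v,w)\leq n-1<\ell(p)$'') already finishes the forward direction once the minimal length is known to equal $\#\mathrm{Sep}(v,w)$, contrary to your remark that ``that alone does not finish it.''
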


\begin{example}\label{Ex 8 chambers 2}
Continuing the example and notation in \ref{Ex 8 chambers 1}, dropping the composition symbol $\circ$, the path $p=s_2 s_1 s_2 s_1  s_2 s_2 s_1 s_1 s_2 s_1$ satisfies $\Begin(p)=\Begin(s_2 s_1 s_2 s_1)$ since
\[
\begin{array}{ccccc}
\begin{array}{c}
\begin{tikzpicture}[scale=0.6,bend angle=15, looseness=1,>=stealth]
\coordinate (A1) at (135:2cm);
\coordinate (A2) at (-45:2cm);
\coordinate (B1) at (153.435:2cm);
\coordinate (B2) at (-26.565:2cm);
\draw[red!30] (A1) -- (A2);
\draw[green!70!black!30] (B1) -- (B2);
\draw[black!30] (-2,0)--(2,0);
\draw[black!30] (0,-2)--(0,2);
\draw ([shift=(50:1.6cm)]0,0) arc (50:168:1.6cm);
\draw (168:1.6cm) arc (168:348:0.095cm);
\draw ([shift=(168:1.41cm)]0,0) arc (168:110:1.41cm);
\draw (110:1.41cm) arc (110:-70:0.095cm);
\draw[->] ([shift=(110:1.22cm)]0,0) arc (110:319:1.22cm);
\node at (50:1.6cm) [DWs] {};
\node at (-36:1.22cm) [DWs] {};
\end{tikzpicture}
\end{array}
&
\sim
&
\begin{array}{c}
\begin{tikzpicture}[scale=0.6,bend angle=15, looseness=1,>=stealth]
\coordinate (A1) at (135:2cm);
\coordinate (A2) at (-45:2cm);
\coordinate (B1) at (153.435:2cm);
\coordinate (B2) at (-26.565:2cm);
\draw[red!30] (A1) -- (A2);
\draw[green!70!black!30] (B1) -- (B2);3
\draw[black!30] (-2,0)--(2,0);
\draw[black!30] (0,-2)--(0,2);
\draw ([shift=(50:1.6cm)]0,0) arc (50:168:1.6cm);
\draw (168:1.6cm) arc (168:348:0.095cm);
\draw ([shift=(168:1.41cm)]0,0) arc (168:-65:1.41cm);
\draw (-65:1.41cm) arc (-65:-250:0.095cm);
\draw[->] ([shift=(-65:1.22cm)]0,0) arc (-65:-41:1.22cm);
\node at (50:1.6cm) [DWs] {};
\node at (-36:1.22cm) [DWs] {};
\end{tikzpicture}
\end{array}
&
\sim
&
\begin{array}{c}
\begin{tikzpicture}[scale=0.6,bend angle=15, looseness=1,>=stealth]
\coordinate (A1) at (135:2cm);
\coordinate (A2) at (-45:2cm);
\coordinate (B1) at (153.435:2cm);
\coordinate (B2) at (-26.565:2cm);
\draw[red!30] (A1) -- (A2);
\draw[green!70!black!30] (B1) -- (B2);3
\draw[black!30] (-2,0)--(2,0);
\draw[black!30] (0,-2)--(0,2);
\draw ([shift=(50:1.6cm)]0,0) arc (50:345:1.6cm);
\draw (-15:1.6cm) arc (-15:170:0.095cm);
\draw ([shift=(-15:1.41cm)]0,0) arc (-15:-65:1.41cm);
\draw (-65:1.41cm) arc (-65:-250:0.095cm);
\draw[->] ([shift=(-65:1.22cm)]0,0) arc (-65:-41:1.22cm);
\node at (50:1.6cm) [DWs] {};
\node at (-36:1.22cm) [DWs] {};
\end{tikzpicture}
\end{array}
\end{array}
\]
Continuing in this way, $p$ has Deligne normal form $s_2 (s_2 s_1 ) (s_1 s_2 s_1) (s_2 s_1 s_2 s_1)$.
\end{example}

\section{The Tilting Order and Chambers} \label{Tilting Section}

Our strategy to prove faithfulness of the action in the flops setting is to exploit the partial order on tilting modules, due to Riedtmann--Schofield and Happel--Unger \cite{RS,HU}.  In the case of minimal resolutions of Kleinian singularities, we can bypass this step by simply appealing to \cite[\S6]{IR}, and so for the proof of faithfulness in this case, the reader can skip immediately to Appendix~\ref{BT appendix}.

\subsection{Tilting Modules and Mutation}\label{tilting mutation}
Recall first that for an algebra $A$ such that the category $\mod A$ of finitely generated $A$-modules is Krull--Schmidt,  $M\in\mod A$ is called {\it basic} if there is no repetition in its Krull--Schmidt decomposition  into indecomposable $A$-modules, and the algebra $A$ is called {\it basic} if it is basic as an $A$-module.   

Throughout this section, $\Lambda$ is a basic $\mathfrak{R}$-algebra, where $\mathfrak{R}$ is a complete local domain.  Note by \cite[p566]{Swan}, for such rings the category $\mod\Lambda$ is Krull--Schmidt.    In our geometric settings later, such $\Lambda$ appear when we work on the formal fibre. 
\begin{defin}\label{tilt def}
$T\in\mod\Lambda$ is a \emph{classical tilting module} if the following conditions hold.
\begin{enumerate}
\item\label{tilt def 1} $\pd_\Lambda T\leq 1$. 
\item\label{tilt def 2} $\Ext^1_\Lambda(T,T)=0$. 
\item\label{tilt def 3} There exists a short exact sequence $0\to \Lambda\to T_1\to T_2\to 0$, with each $T_i\in\add T$.
\end{enumerate}
We write $\tilt \Lambda$ for the set of basic classical tilting $\Lambda$-modules.
\end{defin}

We shall refer to classical tilting modules simply as \emph{tilting modules}, with it being implicit that $\pd_\Lambda T\leq 1$.  
When $T$ is a tilting module, we write $\Fac T$ for the full subcategory of $\mod\Lambda$ consisting of those modules $Y$ such that there exists a surjection $T'\twoheadrightarrow Y$ with $T'\in\add T$.  It is known, and easy to prove from \ref{tilt def}\eqref{tilt def 3}, that
\begin{eqnarray}
\Fac T=\{  X\in\mod\Lambda\mid \Ext^1_\Lambda(T,X)=0\},\label{Fac T eq 1}
\end{eqnarray}
so in particular for any $X\in\Fac T$ there is an exact sequence
\[
0\to Y\to T^\prime\to X\to 0
\]
with $Y\in\Fac T$ and $T^\prime\in\add T$.  It follows immediately that 
\begin{equation}
\add T=\{  X\in\Fac T\mid \Ext^1_\Lambda(X,\Fac T)=0\}.\label{Fac T eq 3}
\end{equation}

The set $\tilt\Lambda$ carries the natural structure of a partially ordered set.
\begin{notation}\label{tilting order}
Let $T,U\in\tilt\Lambda$. We write $T\ge U$ if $\Ext^1_\Lambda(T,U)=0$, or equivalently by \eqref{Fac T eq 1}, if $U\in\Fac T$.  We write $T > U$ if $T\ge U$ and $\lnot(U\ge T)$.
\end{notation}
\noindent
It is immediate from \eqref{Fac T eq 3} and the Krull--Schmidt property that if $T,U\in\tilt\Lambda$ with $T\geq U\geq T$, then $T\cong U$. We remark that  $T\geq U$ if and only if $\Fac T\supseteq \Fac U$, and that $\Lambda\in\tilt\Lambda$ is the greatest element with respect to $\geq$.

Another key property of the set $\tilt\Lambda$ is that it admits an operation called \emph{mutation}. For $T\in \tilt\Lambda$, and an indecomposable direct summand $T_i$ of $T$,
there exists at most one basic tilting $\Lambda$-module $\upnu_iT=(T/T_i)\oplus U_i$ such that $T_i \ncong U_i$ (c.f.\ \cite{RS}).
The module $\upnu_iT$ is called a \emph{tilting mutation} of $T$, and in general it may or may not exist.   As is standard, mutation is encoded in the \emph{exchange graph} of $\tilt\Lambda$.

\begin{notation}
We write $\EG(\Lambda)$ for the \emph{exchange graph}, where vertices are elements of $\tilt \Lambda$, and we draw an edge between $T$ and $\upnu_iT$ for all $T$ and $i$ such that $\upnu_iT$ exists.  Further, for a fixed projective $P$,  let $\EG_P(\Lambda)$ denote the full subgraph of the exchange graph of $\Lambda$ consisting of those vertices that contain $P$ as a summand.
\end{notation}

\subsection{Chambers Associated to Tilting Modules}\label{subsection tilting chambers}
To functorially control compositions of tilting mutations requires chambers, which we now describe.  We first fix notation.  Let $\Lambda$ be a basic $\mathfrak{R}$-algebra, where $\mathfrak{R}$ is a complete local domain, and write $K_0\colonequals K_0(\Kb(\proj\Lambda))$. It is well known that
\begin{equation}
K_0\cong\mathbb{Z}^{n+1}\label{fix K theory basis}
\end{equation}
since every $P\in\proj\Lambda$ can be uniquely written as a direct sum of indecomposable projectives $P_0^{\oplus a_0}\oplus\hdots\oplus P_n^{\oplus a_n}$ for some $a_i$.  In what follows, we will fix the $\mathbb{Z}$-basis of $K_0$ given by \eqref{fix K theory basis}, namely $\{ \mathbf{e}_0,\hdots ,\mathbf{e}_n\}$ where $\mathbf{e}_i$ is the class of $P_i$ in $K_0$.

We now fix a projective, which by convention will be $P_0$,  and we will primarily be interested in $\EG_0(\Lambda)\colonequals \EG_{P_0}(\Lambda)$, and its vertex set $\tilt_0(\Lambda)$ consisting of all tilting $\Lambda$-modules that contain $P_0$ as a summand.  For this purpose, consider the  following factor $\mathbb{R}$-vector space of $K_0\otimes_{\mathbb{Z}}\mathbb{R}\cong\mathbb{R}^{n+1}$ given by
\[
\Uptheta_\Lambda\colonequals (K_0\otimes_{\mathbb{Z}}\mathbb{R})/\Span\{ \mathbf{e}_0\}\cong\mathbb{R}^n.
\]  
By abuse of notation, we write $\{[P_1],\hdots,[P_n]\}$ for the $\mathbb{R}$-basis of $\Uptheta_\Lambda$ induced by \eqref{fix K theory basis}, with it being implicit that the $[-]$ notation works modulo $\Span\{ \mathbf{e}_0\}$.  From this, we define 
\[
C_+\colonequals 
\left\{ \sum_{i=1}^n \upvartheta_i[P_i] 
\,\relmiddle| \, \upvartheta_i> 0\mbox{ for all }1\leq i\leq n 
\right\}
\subseteq\Uptheta_\Lambda.
\]
For $T\in\tilt_0\Lambda$, write $T=T_0\oplus T_1\oplus \hdots\oplus T_n$ where by convention $P_0=T_0$, and  consider 
\begin{align}
C_T&\colonequals 
\left\{ \sum_{i=1}^n\upvartheta_i [T_i] 
\,\relmiddle| \,
 \upvartheta_i> 0\mbox{ for all }1\leq i\leq n
\right\}\subseteq\Uptheta_\Lambda.\label{C T def}
\end{align}
It is clear from the definition that $C_\Lambda=C_+$.

 The following is elementary, and is very similar to the arguments of \cite{Hille,DIJ}.  Since the setting here does not involve Hom-finite categories, we give the proof in Appendix~\ref{tilting appendix}. 

\begin{lemma}\label{easy one way appA text}
Suppose that $\Lambda$ is a basic $\mathfrak{R}$-algebra, where $\mathfrak{R}$ is a complete local domain.  If $T,U\in\tilt_0\Lambda$ are related by a mutation at an indecomposable summand, then $C_T$ and $C_U$ do not overlap, and are separated by a codimension one wall.
\end{lemma}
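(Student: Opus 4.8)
The plan is to show that a mutation $U = \upnu_i T$ at the indecomposable summand $T_i$ produces cones $C_T$ and $C_U$ that lie on opposite sides of a hyperplane, and that they share exactly the facet spanned by the $n-1$ common summands $[T_j]$, $j \neq i$, $j\geq 1$. First I would record the key structural input from mutation theory: since $U = (T/T_i)\oplus U_i$ with $U_i \ncong T_i$, either $T \geq U$ or $U \geq T$ (mutation is a covering relation in the poset $\tilt_0\Lambda$ — this follows from \cite{RS,HU} and the finite-length analysis; it is part of what \ref{easy one way appA text} implicitly packages). Without loss of generality suppose $T > U$, so $U \in \Fac T$, and there is an exchange sequence $0 \to U_i \to E \to T_i \to 0$ with $E \in \add(T/T_i)$, i.e. $E = \bigoplus_{j\neq i, j\geq 1} T_j^{\oplus c_j}$ (possibly also involving $T_0=P_0$, but modulo $\Span\{\mathbf{e}_0\}$ that term disappears). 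Taking classes in $\Uptheta_\Lambda$ gives the linear relation $[U_i] = [E] - [T_i] = \sum_{j\neq i} c_j [T_j] - [T_i]$ with all $c_j \geq 0$ and not all zero (otherwise $U_i$ would be a shift of $T_i$ in $K_0$, contradicting $T>U$).

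Next I would use this relation to locate the wall. The facet $W \colonequals \{\sum_{j\neq i, j\geq 1}\upvartheta_j [T_j] \mid \upvartheta_j > 0\}$ is a common codimension-one face of both $\overline{C_T}$ and $\overline{C_U}$, since the sets $\{[T_j] : j\neq i, j\geq 1\}$ of defining rays of $C_T$ and of $C_U$ agree off the $i$-th spot. It remains to see that $C_T$ and $C_U$ lie on opposite (open) sides of the hyperplane $H \colonequals \Span\{[T_j] : j\neq i, j\geq 1\}$. Pick a linear functional $\ell$ on $\Uptheta_\Lambda$ vanishing on $H$; evaluating the exchange relation gives $\ell([U_i]) = -\ell([T_i])$, since $\ell([T_j]) = 0$ for $j\neq i$. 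The linear independence of $\{[T_0],\dots,[T_n]\}$ in $K_0\otimes\mathbb{R}$ — equivalently of $\{[T_1],\dots,[T_n]\}$ in $\Uptheta_\Lambda$, which holds because $T$ is a tilting module so its indecomposable summands form a $\mathbb{Z}$-basis of $K_0(\Kb(\proj\Lambda))$ — guarantees $\ell([T_i]) \neq 0$, hence $\ell([U_i]) \neq 0$ and has the opposite sign. Therefore a generic point $\sum \upvartheta_j[T_j]$ of $C_T$ (with $\upvartheta_i>0$) has $\ell$-value of one sign, a generic point of $C_U$ has $\ell$-value of the other sign, and the two open simplicial cones do not overlap, being separated precisely by the wall $W$.

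The main obstacle I anticipate is the \emph{sign bookkeeping}: confirming that the exchange sequence for a tilting mutation really does have the direction $0 \to U_i \to E \to T_i \to 0$ (with $E$ involving only the unchanged summands) when $T > U$, rather than the reversed sequence, and that $E$ contains no copy of $T_i$ or $U_i$. This requires care with the Bongartz-type completion / approximation arguments underlying mutation of tilting modules, and is exactly the kind of statement whose clean proof in the non-Hom-finite setting is deferred to Appendix~\ref{tilting appendix}; I would cite \cite{RS} and the approximation description of $\upnu_i$ there. A secondary, purely bookkeeping point is making sure the passage to the quotient $\Uptheta_\Lambda = (K_0\otimes\mathbb{R})/\Span\{\mathbf{e}_0\}$ does not accidentally collapse the wall — but since $P_0 = T_0 = U_0$ is the summand being quotiented out and the mutation is at $i\geq 1$, the classes $[T_1],\dots,[T_n]$ remain a basis and everything descends cleanly.
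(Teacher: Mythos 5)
Your proposal is correct and follows essentially the same route as the paper's proof in Appendix~\ref{tilting appendix}: both extract from the exchange sequence the relation $[T_i]+[U_i]=\sum_{j\neq i}c_j[T_j]$ in $\Uptheta_\Lambda$ and use the fact that the classes of the indecomposable summands of a tilting module form a basis to conclude that $C_T$ and $C_U$ lie strictly on opposite sides of the hyperplane $\Span\{[T_j]\mid j\neq i,\ j\geq 1\}$ (the paper does this by rewriting $C_T$ in the basis $[T_1],\hdots,[T_{n-1}],[U_n]$ rather than via a linear functional, which is the same computation). One small remark: with the paper's conventions the exchange sequence for $T>U$ runs $0\to T_i\to T'\to U_i\to 0$, the reverse of what you wrote, but since only the resulting $K$-theoretic relation is used here, the sign issue you flagged is immaterial for this particular lemma (it does matter for \ref{DIJ lemmaB}).
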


 It is the following that will allow us to control iterations, as it relates the combinatorics of chamber structures to the homological property of the tilting order.  The result seems to be folkflore; for lack of a suitable reference, and since we are working slightly more generally than usual,  we give the proof in \ref{DIJ lemmaB app} in Appendix~\ref{tilting appendix}.

\begin{thm}\label{DIJ lemmaB}
Suppose that $\Lambda$ is a basic $\mathfrak{R}$-algebra, where $\mathfrak{R}$ is a complete local domain.  Suppose that $T,U\in\tilt_0\Lambda$ are related  by a mutation at an indecomposable summand, so by \ref{easy one way appA text} $C_T$ and $C_U$ are separated by $H$.  Suppose that $[\Lambda]\notin H$.  Then $T>U$ iff $C_T$ lies on the same side of $H$ as $[\Lambda]$.
\end{thm}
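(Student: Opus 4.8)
The plan is to reduce the claim to a direct $\Ext$-calculation via the short exact sequences defining the mutation, using the interpretation of $C_T\ni\theta$ as a positivity condition on the pairing $\langle [T_i],\theta\rangle$. First I would set up the exchange sequence: since $T$ and $U=\upnu_iT$ differ at an indecomposable summand, there is (after possibly swapping the roles of $T$ and $U$) an exchange sequence $0\to T_i\to E\to U_i\to 0$ or $0\to U_i\to E\to T_i\to 0$ with $E\in\add(T/T_i)=\add(U/U_i)$, and the sign of this sequence is exactly what records which of $T\ge U$ or $U\ge T$ holds (by \eqref{Fac T eq 1}, $T\ge U$ iff $\Ext^1_\Lambda(T,U)=0$, and the only possible nonvanishing $\Ext^1$ is between $T_i$ and $U_i$). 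The wall $H$ is spanned by the classes $\{[T_j]\}_{j\ne i}=\{[U_j]\}_{j\ne i}$ in $\Uptheta_\Lambda$, and the two chambers $C_T$, $C_U$ sit on opposite sides because $[T_i]$ and $[U_i]$ lie on opposite sides of $H$: indeed the exchange sequence gives $[E]=[T_i]+[U_i]$ in $K_0$, and $[E]\in H$ (being a combination of the $[T_j]$, $j\ne i$), so $[T_i]\equiv -[U_i]$ modulo $H$, which is the geometric content already furnished by \ref{easy one way appA text}.

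Next I would identify the linear functional cutting out $H$. Since $[\Lambda]\notin H$, and $\Uptheta_\Lambda$ is $n$-dimensional with $H$ a hyperplane, there is a functional $\ell$ on $\Uptheta_\Lambda$ with $\ker\ell=H$, normalised so that $\ell([\Lambda])>0$; then "$C_T$ on the same side of $H$ as $[\Lambda]$" means $\ell$ is positive on $C_T$, equivalently $\ell([T_i])>0$ (as $\ell$ vanishes on the other spanning classes $[T_j]$). So the statement to prove becomes: $T>U \iff \ell([T_i])>0 \iff \ell([U_i])<0$. The functional $\ell$ can be taken to be (a suitable rescaling of) the map $[P]\mapsto$ the coefficient, in the expansion of $[P]$ along the basis dual to an appropriate decomposition, that detects crossing $H$; more usefully, I would realise $\ell$ via the Euler-type pairing or via a rank function coming from $\mathfrak{R}$. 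Concretely, write $[T_i] = \sum_{j\ne i} c_j [T_j] + c_i[\Lambda]$ in $K_0/\Span\{\mathbf e_0\}$ — wait, rather: express $[\Lambda]$ in the basis $\{[T_0],\dots,[T_n]\}$ of $K_0$ (possible since $T$ is tilting, so its summands give a basis of $K_0$), say $[\Lambda]=\sum_j d_j[T_j]$; then $d_i\ne 0$ precisely because $[\Lambda]\notin H$, and the sign of $d_i$ is the side of $H$ on which $[\Lambda]$ sits relative to $C_T$. So the goal reduces to showing $T>U \iff d_i>0$, where $d_i$ is the $[T_i]$-coordinate of $[\Lambda]$ with respect to the $T$-basis.

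The crux is then to compute $d_i$ from the tilting data. Here is where I would use condition \ref{tilt def}\eqref{tilt def 3}: there is a short exact sequence $0\to\Lambda\to T^0\to T^1\to 0$ with $T^0,T^1\in\add T$, so $[\Lambda]=[T^0]-[T^1]$, and this expresses $[\Lambda]$ in terms of the multiplicities of the indecomposable summands of $T$ in $T^0$ and $T^1$; thus $d_i = (\text{mult. of }T_i\text{ in }T^0) - (\text{mult. of }T_i\text{ in }T^1)$. The remaining point is to connect the \emph{sign} of this integer to the tilting order. I expect to argue: if $T>U$ then $U\in\Fac T$, and feeding the defining sequence $0\to T_i\to E\to U_i\to 0$ of $U$ through the add-$T$-resolution of $\Lambda$ forces $T_i$ to appear in $T^0$ (the "projective" term) rather than in $T^1$, giving $d_i>0$; conversely $d_i\le 0$ would make $\Ext^1_\Lambda(T,U)\ne 0$ by a dimension-shift / long-exact-sequence argument applied to $\Hom_\Lambda(-,U)$ on $0\to\Lambda\to T^0\to T^1\to 0$, contradicting $T\ge U$, and since exactly one of $T>U$, $U>T$ holds (they are comparable, being mutations, by the Riedtmann–Schofield/Happel–Unger theory) we are done.

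The main obstacle I anticipate is precisely this last sign-bookkeeping step — making rigorous the claim that $T>U$ pins down on which side of the exchange sequence $T_i$ sits, without the comfort of Hom-finiteness (so no Euler form, no Brauer–Thrall-style counting), working only with the $\mathfrak{R}$-linear structure. I would handle it by localising/completing appropriately so that the summands $T_j$ remain indecomposable with local endomorphism rings (using the Krull–Schmidt property cited from \cite{Swan}), and then run the argument of \cite{DIJ} (adapted in Appendix~\ref{tilting appendix}, which I would invoke) that the sign of the mutation is read off from whether the approximation triangle $T_i\to E\to U_i$ or $U_i\to E\to T_i$ is the minimal right $\add(T/T_i)$-approximation of the other term; the side containing $[\Lambda]$ is the one where the approximation goes "the right way" so that $U\in\Fac T$. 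This is morally the $n$-dimensional analogue of the $2$-dimensional picture in \ref{Ex 8 chambers 1}–\ref{Ex 8 chambers 2}, and I would cross-check the argument against that example.
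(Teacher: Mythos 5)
Your proposal is correct and follows essentially the same route as the paper: expand $[\Lambda]$ in the basis of summands of $T$ via the sequence $0\to\Lambda\to T'\to T''\to 0$, use minimality of the right $\add T$-approximation $T''\to\Lambda[1]$ together with the exchange sequence to show $T_i\notin\add T''$ when $T>U$ (this is exactly the paper's Lemma~\ref{DIJ lemmaA}), conclude the relevant coordinate is nonnegative hence positive since $[\Lambda]\notin H$, and obtain the converse by swapping $T$ and $U$ using that a mutation pair is always comparable.
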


%
%
%

\section{Compositions of Mutations and Flops}\label{Compositions section}

In this section we will describe compositions of mutation functors, respectively flop functors, under Deligne normal form.  This, and more generally the proof of faithfulness of the group action, will be reduced to the formal fibre, and so for much of the paper we will work under the following  setup.

\begin{setup}\label{flops setup}
Suppose that $f\colon U\to\Spec \mathfrak{R}$ is a complete local $3$-fold flopping contraction, where $U$ is smooth. 
\end{setup}

It is well-known \cite[3.2.8]{VdB1d} that in this setting $\Db(\coh U)$ admits a tilting bundle $\mathcal{V}$ generated by global sections, which after setting $M\colonequals f_*\cV$, induces an equivalence $\Db(\coh U)\cong\Db(\mod\End_{\mathfrak{R}}(M))$.  The algebra $\End_{\mathfrak{R}}(M)$ contains $\Hom_{\mathfrak{R}}(M,\mathfrak{R})$ as a summand, and in the following we fix $P_0\colonequals \Hom_{\mathfrak{R}}(M,\mathfrak{R})$, so that $\tilt_0(\Lambda)$ consists of those tilting $\Lambda$-modules 
containing $\Hom_{\mathfrak{R}}(M,\mathfrak{R})$ as a summand.

\subsection{CT objects and Simple Wall Crossings} Under the above flops setup, $M\in\CM\mathfrak{R}$ and $\End_{\mathfrak{R}}(M)$ is a NCCR \cite[3.2.9, 3.2.10]{VdB1d}.  It follows \cite[5.4]{IW4}  that $M$ is a \emph{cluster tilting} (=CT) object of $\CM\mathfrak{R}$, namely there are equalities
\[
\add M=\{ X\in\CM\mathfrak{R}\mid \Ext^1_{\mathfrak{R}}(X,M)=0\}
=\{ Y\in\CM\mathfrak{R}\mid \Ext^1_{\mathfrak{R}}(M,Y)=0\}.
\]  
We can, and will, assume that $M$ is basic. The class of basic cluster tilting objects carries an operation of \emph{mutation}, which involves picking an indecomposable summand $M_i$ of a CT module $M$, and uniquely replacing it with a different indecomposable summand whilst remaining CT; the resulting module will be denoted $\upnu_iM$. 

By the three-dimensional Auslander--McKay correspondence \cite[6.9]{HomMMP}, the number of CT $\mathfrak{R}$-modules is equal to the number of chambers of some simplicial hyperplane arrangement, described in detail in \cite[5.24, 5.25]{HomMMP}, and furthermore crossing a codimension one wall (henceforth a \emph{simple wall crossing}) corresponds to mutating an indecomposable summand of the associated CT module.  Consequently, the  $1$-skeleton of the arrangement equals the exchange graph of CT $\mathfrak{R}$-modules.  

Under the setup of \ref{flops setup}, to fix notation we will write $\cH_\Lambda$ for the simplicial hyperplane arrangement associated to $f$, set $M\colonequals f_*\cV$, which will correspond to the chamber $C_+$, and fix $\Lambda\colonequals\End_{\mathfrak{R}}(M)$.  

\begin{example}\label{KatzD4}
There exists \cite{Katz} a $cD_4$ flop with the following simplicial hyperplane arrangement.  Under the Auslander--McKay correspondence, the following picture illustrates the exchange graph of CT objects, where $\upnu_{i_2i_1}\colonequals \upnu_{i_2}\upnu_{i_1}$ etc.   
\[
\begin{tikzpicture}
\node (A) at (0,0) 
{\begin{tikzpicture}[scale=1.25,>=stealth,bend angle=20]
\coordinate (A1) at (135:2cm);
\coordinate (A2) at (-45:2cm);
\coordinate (B1) at (153.435:2cm);
\coordinate (B2) at (-26.565:2cm);
\coordinate (C1) at (161.565:2cm);
\coordinate (C2) at (-18.435:2cm);
\draw[red!30] (A1) -- (A2);
\draw[green!70!black!30] (B1) -- (B2);
\draw[black!30] (-2,0)--(2,0);
\draw[black!30] (0,-1.75)--(0,1.75);
\node (C+) at (45:1.5cm) {$\scriptstyle M$};
\node (C1) at (112.5:1.5cm) {$\scriptstyle \upnu_1M$};
\node (C2) at (145:1.5cm) {$\scriptstyle \upnu_{21}M$};
\node (C3) at (167.5:1.5cm) {$\scriptstyle \upnu_{121}M$};
\node (C-) at (225:1.5cm) {$\scriptstyle \upnu_{1212}M$};
\node (C4) at (-67.5:1.5cm) {$\scriptstyle \upnu_{212}M$};
\node (C5) at (-35:1.5cm) {$\scriptstyle \upnu_{12}M$};
\node (C6) at (-13:1.5cm) {$\scriptstyle \upnu_2M$};
\draw[bend right]  (C+) to (C1);
\draw  (C1) to (C2);
\draw  (C3) to (C2);
\draw[bend left]  (C-) to  (C3);
\draw[bend left]  (C+) to  (C6);
\draw (C6) to  (C5);
\draw (C5) to  (C4);
\draw[bend left]  (C4) to  (C-);
\end{tikzpicture}};
\end{tikzpicture}
\]
\end{example}

Thus, under the setup of \ref{flops setup}, via \cite[5.24, 5.25]{HomMMP} every  chamber $C$ in $\cH_\Lambda$ has an associated CT $\mathfrak{R}$-module $N_C$ say, and thus an associated derived category $\Db(\mod\Lambda_C)$, where $\Lambda_C\colonequals \End_{\mathfrak{R}}(N_C)$.  There are natural equivalences between these categories, as follows.

 \begin{notation}\label{t for length one}
Suppose that $\upalpha\colon C\to D$ is an atom in $\Gamma_{\cH_\Lambda}$.  Then by \cite[4.17]{IW4} $T_{CD}\colonequals\Hom_{\mathfrak{R}}(N_C,N_D)$ is a tilting bimodule from $\Lambda_C$ to $\Lambda_D$, and we consider the equivalence 
\[
 \Db(\mod\Lambda_C)\xrightarrow{t_{\upalpha}\colonequals\RHom_{\Lambda_C}(T_{CD},-)}\Db(\mod\Lambda_D).
\]
When $\upalpha$ is a simple wall crossing, mutating the $i$th summand of $N_C$ say, we will write
\[
t_i\colonequals \RHom_{\Lambda_C}(T_{CD},-),
\] 
and refer to $t_i$ as the \emph{mutation functor}.
\end{notation}

\begin{remark}
By \cite[4.2]{HomMMP}, the functor $t_i$ is functorially isomorphic to the inverse of the flop functor, flopping a single curve $C_i$.   
\end{remark}
 
It is known \cite[3.22]{DW3} that the mutation functors $t_i$  form a representation of the Deligne groupoid, and thus they alone are enough to induce the action of the fundamental group. However, it is the existence of the additional functors $t_\upalpha$ for every atom $\upalpha$ that will allow us to control this action, and prove faithfulness in this paper.

\begin{example}\label{wall crossing functors}
Continuing the example \ref{KatzD4}, setting $\Lambda_I\colonequals \End_{\mathfrak{R}}(\upnu_IM)$, the mutation functors $t_i$ are as follows:
\[
\begin{array}{c}
\begin{tikzpicture}
\node at (0,0) 
{\begin{tikzpicture}[scale=1.75,bend angle=15, looseness=1,>=stealth]
\coordinate (A1) at (135:2cm);
\coordinate (A2) at (-45:2cm);
\coordinate (B1) at (153.435:2cm);
\coordinate (B2) at (-26.565:2cm);
\draw[red!30] (A1) -- (A2);
\draw[green!70!black!30] (B1) -- (B2);
\draw[black!30] (-2,0)--(2,0);
\draw[black!30] (0,-1.75)--(0,1.75);
\node (C+) at (45:1.5cm) {$\scriptstyle\Db(\Lambda)$};
\node (C1) at (112.5:1.5cm) {$\scriptstyle\Db(\Lambda_1)$};
\node (C2) at (145:1.5cm) {$\scriptstyle\Db(\Lambda_{21})$};
\node (C3) at (167.5:1.5cm) {$\scriptstyle\Db(\Lambda_{121})$};
\node (C-) at (225:1.5cm) {$\scriptstyle\Db(\Lambda_{1212})$};
\node (C4) at (-67.5:1.5cm) {$\scriptstyle\Db(\Lambda_{212})$};
\node (C5) at (-35:1.5cm) {$\scriptstyle\Db(\Lambda_{12})$};
\node (C6) at (-13:1.5cm) {$\scriptstyle\Db(\Lambda_{2})$};
\draw[->, bend right]  (C+) to (C1);
\draw[->, bend right]  (C1) to (C+);
\draw[->, bend right]  (C1) to (C2);
\draw[->, bend right]  (C2) to (C1);
\draw[->, bend right]  (C2) to (C3);
\draw[->, bend right]  (C3) to (C2);
\draw[->, bend right]  (C3) to (C-);
\draw[->, bend right]  (C-) to  (C3);
\draw[<-, bend right]  (C+) to  (C6);
\draw[<-, bend right]  (C6) to  (C+);
\draw[<-, bend right]  (C6) to  (C5);
\draw[<-, bend right]  (C5) to (C6);
\draw[<-, bend right]  (C5) to  (C4);
\draw[<-, bend right]  (C4) to (C5);
\draw[<-, bend right]  (C4) to  (C-);
\draw[<-, bend right]  (C-) to (C4);
\node at (83:0.97cm) {$\scriptstyle t_1$};
\node at (75:1.55cm) {$\scriptstyle t_1$};
\node at (125:1.25cm) {$\scriptstyle t_2$};
\node at (132:1.65cm) {$\scriptstyle t_2$};
\node at (157:1.29cm) {$\scriptstyle t_1$};
\node at (157:1.65cm) {$\scriptstyle t_1$};
\node at (198:1.08cm) {$\scriptstyle t_2$};
\node at (198:1.55cm) {$\scriptstyle t_2$};
\node at (258.75:0.95cm) {$\scriptstyle t_1$};
\node at (258.75:1.54cm) {$\scriptstyle t_1$};
\node at (304:1.24cm) {$\scriptstyle t_2$};
\node at (313:1.63cm) {$\scriptstyle t_2$};
\node at (336:1.29cm) {$\scriptstyle t_1$};
\node at (336:1.66cm) {$\scriptstyle t_1$};
\node at (376:1.07cm) {$\scriptstyle t_2$};
\node at (372:1.55cm) {$\scriptstyle t_2$};
\end{tikzpicture}};
\end{tikzpicture}
\end{array}
\] 
There are more direct functors, for all atoms.  As in \ref{atoms from C plus}, for those out of $C_+$ these are 
\[
\begin{array}{c}
\begin{tikzpicture}
\node at (0,0) 
{\begin{tikzpicture}[scale=1.75,bend angle=15, looseness=1,>=stealth]
\coordinate (A1) at (135:2cm);
\coordinate (A2) at (-45:2cm);
\coordinate (B1) at (153.435:2cm);
\coordinate (B2) at (-26.565:2cm);
\draw[red!30] (A1) -- (A2);
\draw[green!70!black!30] (B1) -- (B2);
\draw[black!30] (-2,0)--(2,0);
\draw[black!30] (0,-1.75)--(0,1.75);
\node (C+) at (45:1.5cm) {$\scriptstyle \Db(\Lambda)$};
\node (C1) at (112.5:1.5cm) {$\scriptstyle \Db(\Lambda_1)$};
\node (C2) at (145:1.5cm) {$\scriptstyle \Db(\Lambda_{12})$};
\node (C3) at (167.5:1.5cm) {$\scriptstyle \Db(\Lambda_{121})$};
\node (C-) at (225:1.5cm) {$\scriptstyle \Db(\Lambda_{1212})$};
\node (C4) at (-67.5:1.5cm) {$\scriptstyle \Db(\Lambda_{212})$};
\node (C5) at (-35:1.5cm) {$\scriptstyle \Db(\Lambda_{12})$};
\node (C6) at (-13:1.5cm) {$\scriptstyle \Db(\Lambda_2)$};
\draw[->]  (C+) -- node[above] {$\scriptstyle t_1$} (C1);
\draw[->]  (C+) -- (C2);
\draw[->]  (C+) -- (C3);
\draw[->]  (C+)-- node[above,rotate=45] {$\scriptstyle\RHom_\Lambda(T_{1212},-)$} (C-);
\draw[->]  (C+) --  (C4);
\draw[->]  (C+) -- (C5);
\draw[->]  (C+) -- node[right] {$\scriptstyle t_2$} (C6);
\end{tikzpicture}};
\end{tikzpicture}
\end{array}
\]
There are similar additional functors emerging from each of the other chambers.
\end{example}

\subsection{Atoms and the Tilting Order}\label{atoms and order}
Under the flops setup \ref{flops setup}, recall from the last subsection that we associate an algebra $\Lambda=\End_{\mathfrak{R}}(M)$, and a simplicial hyperplane arrangement $\cH_\Lambda$.   The  functor 
\[
\mathbb{F}\colonequals\Hom_{\mathfrak{R}}(M,-)\colon \mod R \to \mod \Lambda
\]
is fully faithful, and  furthermore by \cite[4.17, 5.11]{IW4} induces an injective map
\begin{equation}
\mathbb{F}\colon \{ \mbox{CT }\mathfrak{R}\mbox{-modules}\} 
\to \tilt_0\Lambda\label{Is inj map}
\end{equation}
where recall $\tilt_0\Lambda$ consists of all tilting $\Lambda$-modules containing $P_0=\Hom_{\mathfrak{R}}(M,\mathfrak{R})$ as a summand. By \cite[4.5(1)]{IW6} this map is compatible with mutation.  But since $\mathfrak{R}$ is an isolated singularity, all possible mutations of a fixed CT $\mathfrak{R}$-module $N$ give all possible mutations of $\mathbb{F}N$ in $\tilt_0\Lambda$, hence the finite connected mutation graph of CT $\mathfrak{R}$-modules induces, under $\mathbb{F}$, a finite connected component of $\tilt_0\Lambda$.  By a result of Happel--Unger (adapted and proved in the setting here in \cite[4.9]{IW6}), $\tilt_0\Lambda$ must equal this finite connected component, thus \eqref{Is inj map} is in fact a bijection compatible with mutation.

It follows that the exchange graph $\EG_0\Lambda$  from \S\ref{subsection tilting chambers} equals the exchange graph of CT $\mathfrak{R}$-modules, in a way compatible with mutation.  Hence, by the last subsection, $\EG_0\Lambda$ also equals the $1$-skeleton of $\cH_\Lambda$, and thus the chambers of $\cH_\Lambda$ are indexed by tilting $\Lambda$-modules, in a manner such that two modules that share a codimension one wall are related by a mutation at an indecomposable summand, in the sense of \S\ref{tilting mutation}. We refer the reader to \ref{8 chamb ex 2} for an example.

The following is our main technical lemma, which uses the tilting chambers to establish in the second part that the composition of mutation functors along Deligne normal form is given by a direct tilt.   To avoid confusion, write $D_T$ for the chamber of $\cH_\Lambda$ indexed by $T\in\tilt_0\Lambda$, and write $C_T$ for the chamber \eqref{C T def}. We write $D_{+}\colonequals D_{\Lambda}$.  

\begin{thm}\label{simplicial and functors ok}
Under the setup \ref{flops setup}, for any $S\in\tilt_0\Lambda$,  suppose that $\upalpha\colon D_+\to D_S$ is an atom in $\Gamma_{\cH_\Lambda}$, and choose a decomposition of $\upalpha$ into length one positive paths
\[
\upalpha=\quad D_\Lambda\xrightarrow{s_{i_1}}D_2\to\hdots\to D_{m}\xrightarrow{s_{i_m}}D_{m+1}.
\]
For $i=2,\hdots,m+1$, write $M_i$ for the CT $\mathfrak{R}$-module corresponding to the chamber $D_i$, so that $S= \mathbb{F}M_{m+1}$.
Then the following assertions hold.
\begin{enumerate}
\item\label{mut funct iso 1} 
As tilting $\Lambda$-modules, $\Lambda=\mathbb{F}M>\mathbb{F}M_2>\hdots>\mathbb{F}M_m>\mathbb{F}M_{m+1}=S$.
\item\label{mut funct iso 2}  There is a bimodule isomorphism
\[
\Hom_{\mathfrak{R}}(M_m,M_{m+1})
\stackrel{\bf L}{\otimes}
\hdots
\stackrel{\bf L}{\otimes}
\Hom_{\mathfrak{R}}(M_2,M_3)
\stackrel{\bf L}{\otimes}
\Hom_{\mathfrak{R}}(M,M_2)
\cong
\Hom_{\mathfrak{R}}(M,M_{m+1})
\]
where, reading right to left, the tensors are over $\End_{\mathfrak{R}}(M_i)$ for $i=2,\hdots,m$.
\item\label{mut funct iso 3}  $C_S\colonequals 
\left\{ \sum_{i=1}^n\upvartheta_i [S_i]\, \mid \upvartheta_i> 0\mbox{ for all }1\leq i\leq n
\right\}$ equals $D_S$.
\end{enumerate}
\end{thm}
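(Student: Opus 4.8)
The plan is to prove the three assertions essentially simultaneously, by induction on the length $m$ of the chosen decomposition of the atom $\upalpha$. The base case $m=0$ is trivial. For the inductive step, I would split $\upalpha$ as $\upalpha = s_{i_m}\circ\upalpha'$, where $\upalpha'\colon D_+\to D_m$ is the atom given by the first $m-1$ wall crossings. The key observation enabling the induction is Lemma~\ref{2.14}: since $\upalpha$ is an atom, it crosses no hyperplane twice, so in particular $\upalpha'$ is also an atom, the hyperplane $H$ separating $D_m$ from $D_{m+1}$ has not been crossed by $\upalpha'$, and — crucially — $D_+$ and $D_{m+1}$ lie on the same side of $H$.

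\textbf{Step 1 (the tilting order).} By the inductive hypothesis applied to $\upalpha'$, we have $\Lambda = \mathbb{F}M > \mathbb{F}M_2 > \hdots > \mathbb{F}M_m$. It remains to show $\mathbb{F}M_m > \mathbb{F}M_{m+1}$. Since $D_m$ and $D_{m+1}$ share a codimension-one wall $H$ and are related by a mutation at an indecomposable summand (by the discussion in \S\ref{atoms and order} identifying $\EG_0\Lambda$ with the $1$-skeleton of $\cH_\Lambda$), Lemma~\ref{easy one way appA text} tells us $C_{\mathbb{F}M_m}$ and $C_{\mathbb{F}M_{m+1}}$ are separated by a wall; and by Theorem~\ref{DIJ lemmaB}, $\mathbb{F}M_m > \mathbb{F}M_{m+1}$ precisely when $C_{\mathbb{F}M_m}$ lies on the same side of that separating wall as $[\Lambda]$. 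So the argument hinges on matching the abstract chambers $C_T$ with the geometric chambers $D_T$ — which is exactly assertion \eqref{mut funct iso 3}. This forces \eqref{mut funct iso 1} and \eqref{mut funct iso 3} to be proved together: I would argue that $C_{D_+}=C_+$ on the nose, that under the inductive hypothesis $C_{\mathbb{F}M_i}=D_i$ for $i\le m$, and then that $C_{\mathbb{F}M_{m+1}}=D_{m+1}$; since $D_m$ and $D_{m+1}$ lie on opposite sides of $H$ while $D_+$ (hence, where $[\Lambda]$ sits) lies on the same side as $D_m$, Theorem~\ref{DIJ lemmaB} yields $\mathbb{F}M_m>\mathbb{F}M_{m+1}$, and conversely the order relation pins down on which side of the separating wall $C_{\mathbb{F}M_{m+1}}$ must lie, giving $C_{\mathbb{F}M_{m+1}}=D_{m+1}$. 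The fact that an atom never recrosses a hyperplane is what guarantees the induction does not "fold back" and create an inconsistency.

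\textbf{Step 2 (the bimodule isomorphism).} Once \eqref{mut funct iso 1} is known, \eqref{mut funct iso 2} should follow from a standard fact about tilting complexes over an isolated-singularity setup: when $T\ge U\ge V$ are tilting modules, the derived tensor product of the tilting bimodule from $\End(U\text{-side})$ to $\End(V\text{-side})$ with that from $\End(T\text{-side})$ to $\End(U\text{-side})$ is again the tilting bimodule from $\End(T)$ to $\End(V)$, with no higher Tor and no shifts appearing — precisely because the order relations mean all the relevant $\Ext^1$-vanishing holds (this is the content of \cite[4.17]{IW4} together with the order-compatibility of \cite[4.5]{IW6}, \cite[4.9]{IW6}). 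Concretely, $\Hom_{\mathfrak{R}}(N_C,N_D)\Lotimes_{\Lambda_C}\Hom_{\mathfrak{R}}(N_B,N_C)\cong\Hom_{\mathfrak{R}}(N_B,N_D)$ whenever the order relations hold; I would cite this and then induct, peeling off $\Hom_{\mathfrak{R}}(M,M_2)$ first. I expect the main obstacle to be Step~1 — specifically, the bookkeeping to show that the geometric chamber $D_{m+1}$ and the K-theoretic chamber $C_{\mathbb{F}M_{m+1}}$ coincide rather than merely lying on the same side of finitely many hyperplanes. The content is that the hyperplane arrangement $\cH_\Lambda$ can be recovered intrinsically from the tilting/K-theory data, which is implicit in \cite{HomMMP} but needs to be made precise here; the "atom does not recross a hyperplane" input is what makes it go through cleanly, and I would lean on it heavily.
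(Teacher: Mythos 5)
Your overall architecture for part \eqref{mut funct iso 1} --- induction on the length of the atom, Lemma~\ref{2.14} to place $[\Lambda]$ on the correct side of the new wall, and Theorem~\ref{DIJ lemmaB} to extract the order relation --- matches the paper's proof. (There is a slip early on: you assert that $D_+$ and $D_{m+1}$ lie on the same side of $H$, whereas an atom not recrossing $H$ puts $D_+$ on the same side as $D_m$; your later reasoning uses the correct version.) The real problem is your treatment of \eqref{mut funct iso 3}. You claim that ``the order relation pins down on which side of the separating wall $C_{\mathbb{F}M_{m+1}}$ must lie, giving $C_{\mathbb{F}M_{m+1}}=D_{m+1}$.'' This does not follow: knowing the side of a single hyperplane confines $C_{\mathbb{F}M_{m+1}}$ to a half-space, not to the chamber $D_{m+1}$, and you concede as much when you flag the coincidence of the two chambers as ``the main obstacle'' without resolving it. The paper closes this gap by reversing your intended logical order: it proves \eqref{mut funct iso 2} first, which upgrades the bimodule isomorphism to a functorial isomorphism between the single tilt $\RHom_\Lambda(S,-)$ and the composition $t_{i_m}\circ\hdots\circ t_{i_1}$ of mutation functors; tracking the positive cone through the inverse of the former gives $C_S$ essentially by definition, while tracking it through the inverse of the latter gives $D_S$ by the moduli-tracking rules of \cite[5.14, 5.15, 5.25]{HomMMP}, the key observation being that the combinatorial tracking rules for $t_j$ and $t_j'$ agree in the flops setting, so the replacements in \cite[5.25]{HomMMP} are harmless. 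Equality $C_S=D_S$ then follows, and is needed as the inductive hypothesis at the next stage, so without some such argument your induction cannot close.

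Secondly, for \eqref{mut funct iso 2} you invoke a ``standard fact'' that the tilting bimodules compose without derived corrections whenever the order relations hold, citing \cite[4.17]{IW4} and \cite[4.5, 4.9]{IW6}; those references do not contain this statement, and it is precisely what has to be proved. The paper does this one wall crossing at a time: Proposition~\ref{flow big to small} (whose hypothesis $\mathbb{F}M_m>\mathbb{F}M_{m+1}$ is supplied by part \eqref{mut funct iso 1}) shows that $\Hom_{\mathfrak{R}}(M_m,M_{m+1})\Lotimes\Hom_{\mathfrak{R}}(M,M_m)$ is concentrated in degree zero, and the identification with $\Hom_{\mathfrak{R}}(M,M_{m+1})$ \emph{as bimodules} is then made explicit via reflexive equivalence together with the adjunction $\upvarphi\otimes t\mapsto\upvarphi(t)$. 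Your plan to peel off $\Hom_{\mathfrak{R}}(M,M_2)$ from the right rather than $\Hom_{\mathfrak{R}}(M_m,M_{m+1})$ from the left would also require rebasing the induction at $\Lambda_2$, which sits awkwardly with the inductive hypothesis you set up at $\Lambda$.
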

\begin{proof}
We prove all assertions together.   By induction we can assume that
\begin{equation}
\Lambda=\mathbb{F}M>\hdots>\mathbb{F}M_m, \label{increase induction}
\end{equation}
that $C_X=D_X$ for the tilting modules in \eqref{increase induction}, and that there is a bimodule isomorphism
\begin{equation}
\Hom_{\mathfrak{R}}(M_{m-1},M_{m})
\stackrel{\bf L}{\otimes}
\hdots
\stackrel{\bf L}{\otimes}
\Hom_{\mathfrak{R}}(M,M_2)
\cong
\Hom_{\mathfrak{R}}(M,M_{m})
\label{bimod induction}
\end{equation}
since the case $m=1$ is clear.

Certainly the hyperplanes of $\cH_\Lambda$ cannot pass through any chamber, in particular $C_\Lambda=D_\Lambda$.  Write $T\colonequals\mathbb{F}M_{m}$, and $H'$ for the wall separating $C_T$ and $C_S$.  Since $C_T=D_T$ by induction, extending $H'$ to a hyperplane, $H'$ is one of the hyperplanes of $\cH_\Lambda$.  Hence since $[\Lambda]\in C_\Lambda$, and the hyperplanes of $\cH_\Lambda$ cannot pass through $C_\Lambda$, necessarily $[\Lambda]\notin H'$.   

We next crash through the wall $H$ from $D_T$ into $D_S$.  If $D_+$ is not on the same side of $H$ as $D_T$, then $\upalpha$ would have to cross $H$ twice, and so by \ref{2.14} applied to $\cH_\Lambda$,  the path $\upalpha$ would not be an atom.  Hence $D_+$ must be on the same side of $H$ as $D_T$.  Since $[\Lambda]\in C_+=D_+$ and $C_T=D_T$, we conclude that $[\Lambda]$ is on the same side of $H'$ as $C_T$.

 By \ref{DIJ lemmaB} necessarily  $T>S$, i.e.\ $\mathbb{F}M_m>\mathbb{F}M_{m+1}$, so combining with \eqref{increase induction} proves \eqref{mut funct iso 1}.  Next, the induction \eqref{bimod induction} gives a bimodule isomorphism
\[
\Hom_{\mathfrak{R}}(M_{m},M_{m+1})
\stackrel{\bf L}{\otimes}
(\hdots
\stackrel{\bf L}{\otimes}
\Hom_{\mathfrak{R}}(M,M_2))
\cong
\Hom_{\mathfrak{R}}(M_{m},M_{m+1})
\stackrel{\bf L}{\otimes}
\Hom_{\mathfrak{R}}(M,M_{m}),
\]
so to prove \eqref{mut funct iso 2} it suffices to show that there is a bimodule isomorphism
\begin{equation}
\Hom_{\mathfrak{R}}(M_{m},M_{m+1})
\stackrel{\bf L}{\otimes}
\Hom_{\mathfrak{R}}(M,M_{m})
\cong
\Hom_{\mathfrak{R}}(M,M_{m+1}).
\label{to show 2}
\end{equation}
Applying \ref{flow big to small} with $T=\mathbb{F}M_m$, $\Gamma=\End_{\mathfrak{R}}(M_{m})$ and $\upnu_i\Gamma=\Hom_{\mathfrak{R}}(M_m,M_{m+1})$ shows that the left hand side of \eqref{to show 2} is concentrated in degree zero, so to prove \eqref{mut funct iso 2} it suffices to show that there is a bimodule isomorphism
\begin{equation}
\Hom_{\mathfrak{R}}(M_{m},M_{m+1})
\otimes
\Hom_{\mathfrak{R}}(M,M_{m})
\cong
\Hom_{\mathfrak{R}}(M,M_{m+1}).
\label{to show 3}
\end{equation}
But there is a chain of isomorphisms
\[
\Hom_{\mathfrak{R}}(M_{m},M_{m+1})
\otimes
\Hom_{\mathfrak{R}}(M,M_{m})
\xrightarrow{\sim}
\Hom_{\Lambda}(T,\mathbb{F}M_{m+1})\otimes T
\xrightarrow{\sim}
\mathbb{F}M_{m+1}
\]
where the first is reflexive equivalence $g\otimes f\mapsto (g\circ-)\otimes f$, and the second is the adjunction from the derived equivalence (using the last statement in \ref{flow big to small}), which takes $\upvarphi\otimes t\mapsto \upvarphi(t)$.  Composing the above shows that there is an isomorphism \eqref{to show 3}, given by $g\otimes f\mapsto g\circ f$.  By inspection this an isomorphism in the category of bimodules, proving \eqref{mut funct iso 2}.

Finally, to prove \eqref{mut funct iso 3}, note that the bimodule isomorphism in \eqref{mut funct iso 2} induces a functorial isomorphism between $\RHom_\Lambda(\mathbb{F}M_{m+1},-)=\RHom_\Lambda(S,-)$ and the composition
\begin{equation} 
\Db(\mod\Lambda)\xrightarrow{t_{i_1}}\hdots\xrightarrow{t_{i_m}}\Db(\mod\Lambda_{m+1}).\label{simple mut chain}
\end{equation}
Writing $\Lambda_{m+1}=P_0\oplus Q_1\oplus\hdots \oplus Q_n$, it is easy to see that tracking 
\begin{equation}
\{ \sum_i\upvartheta_i[Q_i] \mid \upvartheta_i>0, i=1,\hdots, n\}\label{C Gamma}
\end{equation}
 through the inverse of $\RHom_\Lambda(S,-)$ gives 
\[
\{ \sum_i\upvartheta_i[S_i] \mid \upvartheta_i>0, i=1,\hdots, n\}=C_S. 
\]
By the functorial isomorphism, this must give the same answer as tracking \eqref{C Gamma} through the inverse of \eqref{simple mut chain}.  We thus claim that tracking \eqref{C Gamma} through the inverse of \eqref{simple mut chain} gives $D_S$, as then $D_S=C_S$ and the result follows.

On one hand, by the definition of the mutation functors, tracking \eqref{C Gamma} through the inverse of \eqref{simple mut chain} precisely follows the moduli-tracking rules laid out in \cite[5.14, 5.15]{HomMMP}.   On the other hand, it is known \cite[5.25]{HomMMP} that after possibly replacing some of the
\[
t_j=\RHom_{\End_{\mathfrak{R}}(N)}(\Hom_{\mathfrak{R}}(N,\upnu_jN),-)
\] 
in \eqref{simple mut chain} by 
\[
t_j'\colonequals -\otimes^{\bf L}_{\End_{\mathfrak{R}}(N)}\Hom_{\mathfrak{R}}(\upnu_jN,N),
\] 
tracking \eqref{C Gamma} back through the inverse of the replacement chain does indeed give the simplicial cone $D_S$.  Crucially, since the combinatorial rules for tracking through $t_j$ and through $t_j'$ are the same in this flops setting (see \cite[5.15]{HomMMP}), the replacements do not matter, and so tracking \eqref{C Gamma} through the inverse of \eqref{simple mut chain}  also gives $D_S$, as required.
\end{proof}
%
%

\begin{remark}\label{any atom will do}
We remark that the initial choice of decomposition of $\upalpha$ in \ref{simplicial and functors ok} does not matter, as the theorem shows that all choices are functorially isomorphic to $t_\upalpha$.  
\end{remark}

\begin{example}\label{8 chamb ex 2}
Continuing the flopping contraction example in \ref{wall crossing functors}, the chambers of $\cH_\Lambda$ can be indexed by elements of $\tilt_0\Lambda$, as illustrated in the left hand side of the following picture, where $\upnu_{i_2i_1}\Lambda=\Hom_{\mathfrak{R}}(M,\upnu_{i_2i_1}M)$ etc.  The ordering, which is illustrated in the right hand side, is forced by \ref{simplicial and functors ok}\eqref{mut funct iso 1}.
\[
\begin{array}{ccc}
\begin{array}{c}
\begin{tikzpicture}
\node at (5,0) {$\begin{tikzpicture}[scale=1.25,>=stealth,bend angle=20]
\coordinate (A1) at (135:2cm);
\coordinate (A2) at (-45:2cm);
\coordinate (B1) at (153.435:2cm);
\coordinate (B2) at (-26.565:2cm);
\coordinate (C1) at (161.565:2cm);
\coordinate (C2) at (-18.435:2cm);
\draw[red!30] (A1) -- (A2);
\draw[green!70!black!30] (B1) -- (B2);
\draw[black!30] (-2,0)--(2,0);
\draw[black!30] (0,-1.75)--(0,1.75);
\node (C+) at (45:1.5cm) {$\scriptstyle \Lambda$};
\node (C1) at (112.5:1.5cm) {$\scriptstyle \upnu_1\Lambda$};
\node (C2) at (145:1.5cm) {$\scriptstyle \upnu_{21}\Lambda$};
\node (C3) at (167.5:1.5cm) {$\scriptstyle \upnu_{121}\Lambda$};
\node (C-) at (225:1.5cm) {$\scriptstyle \upnu_{1212}\Lambda$};
\node (C4) at (-67.5:1.5cm) {$\scriptstyle \upnu_{212}\Lambda$};
\node (C5) at (-35:1.5cm) {$\scriptstyle \upnu_{12}\Lambda$};
\node (C6) at (-13:1.5cm) {$\scriptstyle \upnu_2\Lambda$};
\draw[bend right]  (C+) to (C1);
\draw  (C1) to (C2);
\draw  (C3) to (C2);
\draw[bend left]  (C-) to  (C3);
\draw[bend left]  (C+) to  (C6);
\draw (C6) to  (C5);
\draw (C5) to  (C4);
\draw[bend left]  (C4) to  (C-);
\end{tikzpicture}$};
\end{tikzpicture}\\
\EG_0(\Lambda)
\end{array}
&&
\end{array}
\begin{array}{c}
\begin{tikzpicture}
\node at (5,0) {$\begin{tikzpicture}[scale=1.25,>=stealth]
\coordinate (A1) at (135:2cm);
\coordinate (A2) at (-45:2cm);
\coordinate (B1) at (153.435:2cm);
\coordinate (B2) at (-26.565:2cm);
\coordinate (C1) at (161.565:2cm);
\coordinate (C2) at (-18.435:2cm);
\draw[red!30] (A1) -- (A2);
\draw[green!70!black!30] (B1) -- (B2);
\draw[black!30] (-2,0)--(2,0);
\draw[black!30] (0,-1.75)--(0,1.75);
\node (C+) at (45:1.5cm) {$\scriptstyle \Lambda$};
\node (C1) at (112.5:1.5cm) {$\scriptstyle \upnu_1\Lambda$};
\node (C2) at (145:1.5cm) {$\scriptstyle \upnu_{21}\Lambda$};
\node (C3) at (167.5:1.5cm) {$\scriptstyle \upnu_{121}\Lambda$};
\node (C-) at (225:1.5cm) {$\scriptstyle \upnu_{1212}\Lambda$};
\node (C4) at (-67.5:1.5cm) {$\scriptstyle \upnu_{212}\Lambda$};
\node (C5) at (-35:1.5cm) {$\scriptstyle \upnu_{12}\Lambda$};
\node (C6) at (-13:1.5cm) {$\scriptstyle \upnu_2\Lambda$};
\node at (90:1.35cm) [rotate=-15] {$<$};
\node at (133:1.5cm) [rotate=45] {$<$};
\node at (155:1.4cm) [rotate=65] {$<$};
\node at (180:1.3cm) [rotate=90] {$<$};
\node at (0:1.4cm) [rotate=90] {$<$};
\node at (-47:1.5cm) [rotate=45] {$<$};
\node at (-26:1.45cm) [rotate=65] {$<$};
\node at (-90:1.3cm) [rotate=-15] {$<$};
\end{tikzpicture}$};
\end{tikzpicture}\\
\mbox{Order}
\end{array}
\]
Since the positive path below corresponding to the composition $t_2t_1t_2$ is an atom, it also follows from \ref{simplicial and functors ok}\eqref{mut funct iso 2} (applied to $\Lambda_1$) that the composition $t_2t_1t_2$ is functorially isomorphic to the direct functor shown
\[
\begin{array}{c}
\begin{tikzpicture}
\node at (0,0) 
{\begin{tikzpicture}[scale=1.75,bend angle=15, looseness=1,>=stealth]
\coordinate (A1) at (135:2cm);
\coordinate (A2) at (-45:1.8cm);
\coordinate (B1) at (153.435:2cm);
\coordinate (B2) at (-26.565:2cm);
\draw[red!30] (A1) -- (A2);
\draw[green!70!black!30] (B1) -- (B2);
\draw[black!30] (-2,0)--(2,0);
\draw[black!30] (0,-1.3)--(0,1.5);
\node (C1) at (112.5:1.5cm) {$\scriptstyle\Db(\Lambda_1)$};
\node (C2) at (145:1.5cm) {$\scriptstyle\Db(\Lambda_{21})$};
\node (C3) at (167.5:1.5cm) {$\scriptstyle\Db(\Lambda_{121} )$};
\node (C-) at (225:1.5cm) {$\scriptstyle\Db(\Lambda_{1212} )$};
\draw[->, bend right]  (C1) to (C2);
\draw[->, bend right]  (C2) to (C3);
\draw[->, bend right]  (C3) to (C-);
\draw[->, bend left]  (C1) to node [right] {$\scriptstyle \RHom_{\Lambda_1}(\Hom_{\mathfrak{R}}(\upnu_1M,\upnu_{1212}M),-)$} (C-);
\node at (127:1.675cm) {$\scriptstyle t_2$};
\node at (157:1.675cm) {$\scriptstyle t_1$};
\node at (198:1.6cm) {$\scriptstyle t_2$};
\end{tikzpicture}};
\end{tikzpicture}
\end{array}
\]
\end{example}

%
%
%
%
%
%
%

For future use, a useful corollary of \ref{simplicial and functors ok} is the following.
\begin{cor}\label{cohom}
Under the assumptions \ref{flops setup}, let $\upalpha\colon C\to D$ be an atom, and let $N\in\mod\Lambda_C$. Then $H^i(t_{\upalpha}N)=0$ for all $i\neq 0,1$.
\end{cor}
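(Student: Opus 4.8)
The plan is to reduce at once to the defining bound $\pd_{\Lambda_C}\leq 1$ for a classical tilting module. Recall from \ref{t for length one} that $t_\upalpha=\RHom_{\Lambda_C}(T_{CD},-)$ with $T_{CD}=\Hom_{\mathfrak{R}}(N_C,N_D)$, and that $T_{CD}$ is a tilting bimodule by \cite[4.17]{IW4}; in particular $\pd_{\Lambda_C}T_{CD}\leq 1$, by \ref{tilt def}\eqref{tilt def 1} and our convention that ``tilting'' always means classical tilting. This is exactly the content of \ref{simplicial and functors ok} from the present point of view: the a priori complicated composition of mutation functors along any decomposition of $\upalpha$ collapses to the single tilting functor $\RHom_{\Lambda_C}(T_{CD},-)$, whose cohomological amplitude is then visibly controlled. (Equivalently, for a general atom one runs \S\ref{atoms and order} with the chamber $C$ in the role of $D_+$ and quotes \ref{simplicial and functors ok} there.)

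Given this, fix a projective resolution
\[
0\to P_1\to P_0\to T_{CD}\to 0
\]
of $T_{CD}$ in $\mod\Lambda_C$, which has length one by the projective dimension bound. Then for $N\in\mod\Lambda_C$ the object $t_\upalpha N=\RHom_{\Lambda_C}(T_{CD},N)$ is represented by the two-term complex $\Hom_{\Lambda_C}(P_0,N)\to\Hom_{\Lambda_C}(P_1,N)$ placed in cohomological degrees $0$ and $1$. Hence $H^i(t_\upalpha N)=0$ for all $i\neq 0,1$; explicitly $H^0(t_\upalpha N)\cong\Hom_{\Lambda_C}(T_{CD},N)$ and $H^1(t_\upalpha N)\cong\Ext^1_{\Lambda_C}(T_{CD},N)$.

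There is no genuine obstacle here. The only substantive input is the identification of $t_\upalpha$ with a single tilting functor, which is available for every atom by \ref{t for length one} (and is the conceptual upshot of \ref{simplicial and functors ok}); once one has a classical tilting module, its projective dimension is at most one by definition, and the conclusion is the standard computation of $\RHom$ against a length-one projective resolution.
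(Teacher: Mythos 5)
Your proof is correct and is exactly the argument the paper intends: the corollary is stated without proof as an immediate consequence of the fact that $t_\upalpha=\RHom_{\Lambda_C}(T_{CD},-)$ for a classical tilting module $T_{CD}$ of projective dimension at most one (via \ref{t for length one} and \ref{simplicial and functors ok}), so the two-term projective resolution computation you give is precisely what is meant. No issues.
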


\section{Tracking via Torsion Pairs}\label{tracking torsion pairs}

Under the flops setup of \ref{flops setup}, suppose that $C$ is a chamber of $\cH_\Lambda$.  It follows from \ref{simplicial and functors ok} applied to $\Lambda_C=\End_{\mathfrak{R}}(N_C)$ that if $\upalpha\colon C\to D$ is an atom, then the composition of mutation functors along the path $\upalpha$ is functorially isomorphic to $t_\upalpha= \RHom_{\Lambda_C}(T_\upalpha,-)$ where $T_\upalpha\colonequals\upnu_{\!\upalpha}\Lambda_C\in\tilt_0\Lambda_C$.  We will use this implicitly from now on.

As is standard (see e.g.\ \cite[2.7]{SY}),  $T_\upalpha$ induces two torsion pairs, which restrict to torsion pairs on finite length modules $\fl\Lambda_C$ and $\fl\Lambda_D$.  These are $(\cT_\upalpha,\cF_\upalpha)$ and $(\cX_\upalpha,\cY_\upalpha)$, where
\begin{align*}
\cT_\upalpha&\colonequals \{ N\in\fl \Lambda_C\mid \Ext^1_{\Lambda_C}(T_\upalpha,N)=0\}\\
\cF_\upalpha&\colonequals \{ N\in\fl \Lambda_C\mid \Hom_{\Lambda_C}(T_\upalpha,N)=0\},
\end{align*} 
and 
\begin{align*}
\cX_\upalpha&\colonequals \{ N\in\fl \Lambda_D\mid N\otimes_{\Lambda_D} T_\upalpha=0\}\\
\cY_\upalpha&\colonequals \{ N\in\fl \Lambda_D\mid \Tor_1^{\Lambda_D}(N,T_\upalpha)=0\}.
\end{align*} 
The Brenner--Butler Theorem for finite dimensional algebras (proved in the module-finite setting here in \cite[2.9]{SY}) asserts that these tilting modules not only induce the above two torsion pairs, but also induce the following categorical equivalences:
\begin{eqnarray}
\begin{array}{ccc}
\begin{array}{c}
\begin{tikzpicture}
\node (A) at (-0.75,0) {$\cT_\upalpha$};
\node (B) at (2.15,0) {$\cY_\upalpha$};
\draw[->] (-0.3,0.1) -- node [above] {$\scriptstyle \Hom_{\Lambda_C}(T_\upalpha,-)$} (1.7,0.1);
\draw [<-] (-0.3,-0.1) -- node [below] {$\scriptstyle \otimes_{\Lambda_D} T_\upalpha$} (1.7,-0.1);
\end{tikzpicture}
\end{array}
&
\quad\mbox{and}\quad\quad
& 
\begin{array}{c}
\begin{tikzpicture}
\node (A) at (-0.75,0) {$\cF_\upalpha$};
\node (B) at (2.15,0) {$\cX_\upalpha.$};
\draw[->] (-0.3,0.1) -- node [above] {$\scriptstyle \Ext^1_{\Lambda_C}(T_\upalpha,-)$} (1.7,0.1);
\draw [<-] (-0.3,-0.1) -- node [below] {$\scriptstyle \Tor_1^{\Lambda_D}(-,T_\upalpha)$} (1.7,-0.1);
\end{tikzpicture}
\end{array}
\end{array}
\label{TF and XY}
\end{eqnarray}

To control the functors $t_\upalpha$ requires us to track various objects, which we do here.  The following lemma is a standard fact about Deligne normal form, which precisely mirrors the Coxeter version.

\begin{lemma}\label{standard Cox}
Suppose that $\upalpha$ is an atom.  Then
\begin{enumerate}
\item\label{standard Cox 1} $\upalpha s_i$ is an atom $\iff$ $s_i\notin\Begin(\upalpha)$.
\item\label{standard Cox 2} $s_j \upalpha$ is an atom $\iff$ $s_j\notin\Ends(\upalpha)$.
\end{enumerate}
\end{lemma}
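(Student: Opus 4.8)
The plan is to prove \ref{standard Cox} purely combinatorially, working inside $\Gamma_{\cH}$ with the characterisation of atoms from \ref{2.14}: a positive path is an atom if and only if it crosses no hyperplane twice. The two statements are mirror images of each other (reversing all arrows swaps $\Begin$ and $\Ends$), so it suffices to prove \eqref{standard Cox 1} carefully and then remark that \eqref{standard Cox 2} follows by the symmetric argument.

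For \eqref{standard Cox 1}, first I would prove the contrapositive of the ``$\Rightarrow$'' direction: if $s_i\in\Begin(\upalpha)$, then $\upalpha\sim \upbeta s_i$ for some positive path $\upbeta$ (by definition of $\Begin$). Then $\upalpha s_i\sim \upbeta s_i s_i$, and since $s_i s_i$ crosses the hyperplane separating $s(s_i)$ from $t(s_i)$ twice (it goes across and immediately back), the path $\upalpha s_i$ crosses that hyperplane twice; by \ref{2.14} it is therefore not an atom. For the ``$\Leftarrow$'' direction, I would argue contrapositively again: suppose $\upalpha s_i$ is \emph{not} an atom. Since $\upalpha$ is an atom (hence crosses no hyperplane twice) and $s_i$ is a single wall crossing, the only way for $\upalpha s_i$ to cross some hyperplane twice is for $s_i$ to re-cross a hyperplane $H$ that $\upalpha$ already crossed. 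Here I would invoke the key geometric fact about atoms in simplicial arrangements: an atom from $C$ to $D$ crosses exactly the set of hyperplanes separating $C$ from $D$, each exactly once. The hyperplane $H$ crossed by $s_i$ (as the last step) is the unique wall between the second-to-last chamber and $t(\upalpha s_i)$; if $\upalpha$ already crossed $H$, then $H$ does \emph{not} separate $s(\upalpha)$ from $t(\upalpha)$ but the composite brings us back to the $s(\upalpha)$-side of $H$, and one shows this forces $s_i$ (reversed and transported to the front) to lie in $\Begin(\upalpha)$. Concretely: $\upalpha s_i$ crosses $H$ zero times net, so $s(\upalpha s_i)=s(\upalpha)$ and $t(\upalpha s_i)$ lie on the same side of $H$; a standard exchange/deletion argument in the Deligne groupoid (using that the reversed path $s_i^{-1}$ composed appropriately still lands at an adjacent chamber across $H$ from $s(\upalpha)$) produces an equivalence $\upalpha\sim\upalpha' s_i$, i.e.\ $s_i\in\Begin(\upalpha)$.

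The main obstacle is the ``$\Leftarrow$'' direction, specifically making rigorous the claim that if $\upalpha$ is an atom and $s_i$ re-crosses a hyperplane already crossed by $\upalpha$, then $s_i\in\Begin(\upalpha)$. The cleanest route is probably to cite the relevant structural lemma from \cite{Paris} or \cite{Deligne} about atoms and the sets $\Begin$, $\Ends$ in simplicial arrangements — in particular the fact (used already in \ref{2.14} and in the definition of Deligne normal form) that $\Begin(\upalpha)$ is closed under the relevant operations and that $\upalpha$ begins with $s_i$ precisely when $s_i$ can be ``pulled to the front''. An alternative, more self-contained approach is to use the gallery/half-space description: identify $s_i$ with its separating hyperplane $H_i$, note that $\upalpha$ being an atom means the sign vector (relative to $\cH$) changes monotonically along $\upalpha$, and observe that $\upalpha s_i$ failing to be an atom means $H_i$ is crossed by $\upalpha$; then a direct induction on $\ell(\upalpha)$, peeling off the first step and tracking which side of $H_i$ we are on, shows $s_i$ commutes past to the front. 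I would present the proof using the cited combinatorial lemmas from \cite{Paris} to keep it short, since the paper has already committed to that combinatorial framework, and simply note that \eqref{standard Cox 2} is obtained by applying \eqref{standard Cox 1} to the opposite arrangement (equivalently, reversing all arrows in $\Gamma_{\cH}$, which exchanges sources with targets and $\Begin$ with $\Ends$).
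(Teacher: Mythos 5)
Your skeleton is the paper's proof: both directions by contrapositive, with \ref{2.14} doing all the work, and statement \eqref{standard Cox 2} obtained by the mirror argument. The forward direction ($s_i\in\Begin(\upalpha)\Rightarrow\upalpha s_i$ not an atom, via $\upalpha s_i\sim\upbeta s_i s_i$ crossing the wall of $s_i$ twice) is exactly what the paper does. In the backward direction you also correctly isolate the key reduction: because $\upalpha$ is an atom its crossings are pairwise distinct, so the only hyperplane $\upalpha s_i$ can cross twice is the wall $H$ of $s_i$; hence $\upalpha$ crosses $H$ once and the two extreme chambers $A=s(s_i)$ and $C=t(\upalpha)$ lie on the same side of $H$. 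The one genuine soft spot is what comes next: you defer to ``a standard exchange/deletion argument'' or to citing \cite{Paris} for the statement that $s_i$ can then be pulled to the front --- but that statement \emph{is} the lemma, so as written the step is either circular or outsourced. The paper closes it in two lines with no exchange machinery: let $\upbeta$ be any atom from $A$ to $C$; since $A$ and $C$ are on the same side of $H$, \ref{2.14} forces $\upbeta$ not to cross $H$; therefore the composite $B\xrightarrow{s_i}A\xrightarrow{\upbeta}C$ crosses no hyperplane twice, so it is an atom by \ref{2.14}, and having the same source and target as $\upalpha$ it is equivalent to $\upalpha$ by condition (2) in the definition of $\sim$. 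This gives $\upalpha\sim\upbeta s_i$, i.e.\ $s_i\in\Begin(\upalpha)$. You should insert this construction explicitly rather than appeal to an exchange lemma.

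One further correction: in the paper's convention $\upalpha s_i=\upalpha\circ s_i$ means $s_i$ is performed \emph{first}, so in \eqref{standard Cox 1} the arrow $s_i$ is the initial step of the composite and the element of $\Begin(\upalpha)$ is the reversed wall-crossing $s(\upalpha)\to A$. Several of your sentences (``$H$ crossed by $s_i$ as the last step'', ``$s(\upalpha s_i)=s(\upalpha)$'') read the composition the other way; under that reading you would be proving the $\Ends$ statement \eqref{standard Cox 2}. Since the two are interchanged by reversing all arrows this does not affect the mathematics, but the write-up should be made internally consistent with the paper's ordering.
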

\begin{proof}
For lack of a suitable reference, we give the proof of \eqref{standard Cox 1}, with \eqref{standard Cox 2} being similar.\\
($\Rightarrow$) is clear, using \ref{2.14}.\\
($\Leftarrow$) We prove the contrapositive.  Suppose that the composition
\[
A\xrightarrow{s_i} B \xrightarrow{\upalpha}C
\] 
is not an atom, and write $H$ for the hyperplane separating $A$ and $B$.  By \ref{2.14}, $\upalpha s_i$ must cross some hyperplane at least twice.  But since $\upalpha$ is an atom, again by \ref{2.14}, the hyperplanes that $\upalpha$ crosses must be distinct.  Hence the only possibility is that $\upalpha s_i$ crosses $H$ precisely twice.

In particular, $\upalpha$ must cross $H$, and so since it cannot cross $H$ twice by \ref{2.14}, $t(\upalpha)=C$ must be on the same side of $H$ as $A$.  If we write $\upbeta$ for the smallest positive path (atom) from $A$ to $C$, then $\upbeta$ cannot cross $H$ by \ref{2.14}, since $A$ and $C$ lie on the same side of $H$.  Since $s_i\colon B\to A$ obviously only crosses $H$, it follows again by \ref{2.14} that the composition
\[
B\xrightarrow{s_i}A\xrightarrow{\upbeta}C
\] 
is an atom. Hence $\upalpha\sim\upbeta s_i$, since both are atoms from $B$ to $C$, and so $s_i\in\Begin(\upalpha)$.
\end{proof}

\begin{notation}\label{not new}
In each chamber $D$ of $\EG_0(\Lambda)$ there is an algebra $\Lambda_D$ with precisely $n+1$-simples.  By abuse of notation we will denote these simples $S_0,S_1,\hdots, S_n$, where $S_0$ always corresponds to $P_0$, and performing the simple wall crossing $s_i$ corresponds to the tilting mutation at the projective cover of $S_i$.  We will use the same notation $S_i$ for every $\Lambda_D$, and will often consider $\cS\colonequals \bigoplus_{i=0}^nS_i$, with it being implicit from the context which $\Lambda_D$ to view this as a module over. 
\end{notation}

\begin{lemma}\label{Si under inverse}
Under the assumptions \ref{flops setup}, $t_i(S_i)\cong S_i[-1]$ for all $1\leq i\leq n$.
\end{lemma}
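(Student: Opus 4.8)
The plan is to compute $\RHom_{\Lambda_C}(T_i,S_i)$ directly from the exchange sequence of the mutation. Write $s_i\colon C\to D$, abbreviate $\Lambda_C,\Lambda_D$, and recall from \ref{t for length one}, together with the compatibility of $\mathbb{F}=\Hom_{\mathfrak{R}}(N_C,-)$ with mutation from \S\ref{atoms and order}, that $t_i=\RHom_{\Lambda_C}(T_i,-)$ with $T_i=\Hom_{\mathfrak{R}}(N_C,N_D)=\mathbb{F}(N_D)=\upnu_i\Lambda_C$, which as a right $\Lambda_C$-module is $T_i=U_i\oplus\bigoplus_{j\neq i}P_j$ with $P_j=e_j\Lambda_C$ and $U_i=\mathbb{F}((N_D)_i)$. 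The first step is to record the shape of the exchange sequence: mutation of the CT module $N_C$ at its $i$-th summand gives a short exact sequence $0\to(N_C)_i\to B_0\to(N_D)_i\to 0$ with $B_0\in\add(N_C/(N_C)_i)$, and since $\Ext^1_{\mathfrak{R}}(N_C,(N_C)_i)=0$ by the CT property, applying the exact functor $\mathbb{F}$ yields a two-term projective resolution
\[
0\longrightarrow P_i\longrightarrow E_0\longrightarrow U_i\longrightarrow 0,\qquad E_0=\mathbb{F}(B_0)\in\add\big(\textstyle\bigoplus_{j\neq i}P_j\big).
\]

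With this in hand the computation is immediate. The projective summands of $T_i$ contribute nothing, since $\RHom_{\Lambda_C}(P_j,S_i)=\Hom_{\Lambda_C}(P_j,S_i)=S_ie_j=0$ for $j\neq i$ (as $S_i$ is supported at vertex $i$). Applying $\Hom_{\Lambda_C}(-,S_i)$ to the resolution of $U_i$ and using $\Hom_{\Lambda_C}(E_0,S_i)=0$ gives $\RHom_{\Lambda_C}(U_i,S_i)\cong\Hom_{\Lambda_C}(P_i,S_i)[-1]\cong S_i[-1]$, concentrated in cohomological degree $1$. Hence $t_i(S_i)\cong X[-1]$ for the $\Lambda_D$-module $X\colonequals\Ext^1_{\Lambda_C}(T_i,S_i)$, which as a $k$-vector space is $\cong S_i$. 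It remains to identify $X$. Since the base field $k$ is algebraically closed, $\dim_kS_i=\dim_k\End_{\Lambda_C}(S_i)=1$, so $X$ is one-dimensional over $k$ and therefore a simple $\Lambda_D$-module. To see which one, act on the right by the primitive idempotents of $\Lambda_D$: for $j\neq i$ the corresponding right $\Lambda_C$-module summand $e_j^D T_i=\Hom_{\mathfrak{R}}(N_C,(N_D)_j)$ equals $\Hom_{\mathfrak{R}}(N_C,(N_C)_j)=P_j$ (the $j$-th summand of $N_C$ is unaffected by the mutation), so $e_j^D\cdot\RHom_{\Lambda_C}(T_i,S_i)=\RHom_{\Lambda_C}(P_j,S_i)=0$, whence $Xe_j^D=0$ for all $j\neq i$. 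The only simple $\Lambda_D$-module with this property is $S_i$, so $t_i(S_i)\cong S_i[-1]$, as required.

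The step carrying the real content, as opposed to formal bookkeeping, is pinning down the orientation and shape of the exchange sequence — that $U_i$ admits the two-term projective resolution $0\to P_i\to E_0\to U_i\to 0$ with $E_0$ free of $P_i$-summands — which is where the Iyama--Wemyss theory of CT-module mutation enters, via the existence of the exchange sequence $0\to(N_C)_i\to B_0\to(N_D)_i\to 0$ and the exactness of $\mathbb{F}$ on it. If one prefers to avoid the algebraically closed hypothesis in the final identification, one can instead run it through the Brenner--Butler equivalence $\Ext^1_{\Lambda_C}(T_i,-)\colon\cF_i\to\cX_i$ of \eqref{TF and XY}: a short argument (using that $\Hom_{\Lambda_C}(E_0,-)$ annihilates every module supported at vertex $i$) shows $\cF_i$ is precisely the category of finite-length $\Lambda_C$-modules supported at vertex $i$, whose underlying algebra $\Lambda_C/\Lambda_C(1-e_i)\Lambda_C$ is a quotient of the local ring $\End_{\mathfrak{R}}((N_C)_i)$ and hence has a unique simple, namely $S_i$; so $X=\Ext^1_{\Lambda_C}(T_i,S_i)$ is the unique simple object of $\cX_i$, and the idempotent computation above again identifies it with $S_i\in\fl\Lambda_D$.
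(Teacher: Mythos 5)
Your proof is correct and follows essentially the same route as the paper: both hinge on the two-term projective resolution $0\to P_i\to P'\to C_i\to 0$ of the non-projective summand of $\upnu_i\Lambda_C$ (you obtain it by applying $\mathbb{F}$ to the CT exchange sequence, the paper takes it directly from the tilting order $\Lambda_C>\upnu_i\Lambda_C$ as in Appendix B) and then apply $\Hom_{\Lambda_C}(-,S_i)$. The only difference is that you spell out the degree computation and the identification of $\Ext^1_{\Lambda_C}(T_i,S_i)$ as the simple $\Lambda_D$-module at vertex $i$, which the paper delegates to \cite[4.15(2)]{HomMMP}.
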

\begin{proof}
Say $s_i\colon C\to D$, so that $t_i=\RHom_{\Lambda_C}(\upnu_i\Lambda_C,-)$.  Since $\Lambda_C>\upnu_i\Lambda_C$, as in Appendix~\ref{tilting appendix} there exists a short exact sequence
\[
0\to P_i\to P'\to C_i\to 0
\]
with $P'\in\add\frac{\Lambda_C}{P_i}$ such that $\upnu_i\Lambda_C=\frac{\Lambda_C}{P_i}\oplus C_i$.  Applying $\Hom_{\Lambda_C}(-,S_i)$ to the above sequence yields the result, exactly as in \cite[4.15(2)]{HomMMP}.
\end{proof}

For our purposes later, we require more than \ref{Si under inverse}, namely for atoms $\upalpha\colon C\to D$ we need to track all summands of $\cS$ under the inverse functor $t_\upalpha^{-1}\cong -\otimes^{\bf L}_{\Lambda_D}T_\upalpha$.  Since $(\cX_\upalpha,\cY_\upalpha)$ is a torsion pair on $\fl\Lambda_D$, and each $S_i$ is simple, either $S_i\in\cX_\upalpha$ or $S_i\in\cY_\upalpha$.  Using the categorical equivalences \eqref{TF and XY} it thus follows that
\begin{eqnarray}
t_\upalpha^{-1}(S_i)=
\left\{
\begin{array}{cl}
\Tor_1^{\Lambda_D}(S_i,T_\upalpha)[1]&\mbox{if }S_i\in\cX_\upalpha\\[1mm]
S_i\otimes_{\Lambda_D} T_\upalpha&\mbox{if }S_i\in\cY_\upalpha.
\end{array}
\right.\label{inverse Si}
\end{eqnarray}
In the top case, $t_\upalpha^{-1}(S_i)$ is the shift of a module in $\cF_{\upalpha}$, and in the bottom case $t_\upalpha^{-1}(S_i)$ is a module in $\cT_{\upalpha}$.

The following is our key preparatory lemma, which says that the torsion pairs $(\cT_\upalpha,\cF_\upalpha)$ and $(\cX_\upalpha,\cY_\upalpha)$ detect both how $\upalpha$ starts, and how $\upalpha$ ends.

\begin{lemma}\label{Key SY lemma}
Under the assumptions \ref{flops setup}, suppose that $\upalpha\colon C\to D$ is an atom. Then for $i\neq 0$, the following statements hold.
\begin{enumerate}
\item\label{Key SY lemma 1}
$S_i\in\cF_{\upalpha}\iff s_i\in\Begin(\upalpha)$.
\item\label{Key SY lemma 2}  
$S_i\in\cT_{\upalpha}\iff s_i\notin\Begin(\upalpha)$.
\item\label{Key SY lemma 3}  
$S_i\in\cX_{\upalpha}\iff s_i\in\Ends(\upalpha)$.
\item\label{Key SY lemma 4}  
$S_i\in\cY_{\upalpha}\iff s_i\notin\Ends(\upalpha)$.
\end{enumerate}
\end{lemma}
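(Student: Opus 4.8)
The plan is to prove (1), then deduce (2), (3), (4) formally. Since $(\cT_\upalpha,\cF_\upalpha)$ is a torsion pair on $\fl\Lambda_C$ and each $S_i$ is simple, we have $S_i\in\cT_\upalpha$ or $S_i\in\cF_\upalpha$ but not both, so (2) is the negation of (1). Likewise (4) is the negation of (3) since $(\cX_\upalpha,\cY_\upalpha)$ is a torsion pair on $\fl\Lambda_D$. To get (3) from (1), apply (1) to the atom $\upalpha^{-1}\colon D\to C$; but we must first observe that $\cF_{\upalpha^{-1}}$ matches $\cX_{\upalpha}$ under the Brenner--Butler equivalences \eqref{TF and XY}: the tilting bimodule $T_{\upalpha^{-1}}$ from $\Lambda_D$ to $\Lambda_C$ corresponds to $T_\upalpha$ from $\Lambda_C$ to $\Lambda_D$ in the standard way (e.g.\ $T_{\upalpha^{-1}}\cong\RHom_{\Lambda_C}(T_\upalpha,\Lambda_C)$ as bimodules, cf.\ \cite[4.17]{IW4}), and the relevant torsion classes of $T_{\upalpha^{-1}}$ on $\fl\Lambda_D$ are exactly $(\cX_\upalpha,\cY_\upalpha)$, so $\cF_{\upalpha^{-1}}=\cX_\upalpha$. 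Also $\Ends(\upalpha)=\{s_i\mid s_i^{-1}\in\Begin(\upalpha^{-1})\}=\Begin(\upalpha^{-1})$ since $s_i=s_i^{-1}$ as wall crossings. Hence (3) follows from (1) applied to $\upalpha^{-1}$. So the entire content is (1).

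For (1), I would argue by induction on $\ell(\upalpha)$. The base case $\ell(\upalpha)=1$ is \ref{Si under inverse}: if $\upalpha=s_i$ then $t_i(S_i)\cong S_i[-1]$, which via the Brenner--Butler picture means $S_i\in\cF_{\upalpha}$ (it lands in the shift of $\cX$-side image), while for $j\neq i$ one checks $t_i(S_j)$ is concentrated in degree $0$ (since mutation at the $i$-th summand only affects $S_i$), so $S_j\in\cT_\upalpha$; and indeed $\Begin(s_i)=\{s_i\}$. For the inductive step, use Deligne normal form: write $\upalpha\sim\upbeta\circ s_i$ where $s_i\in\Begin(\upalpha)$, so $\upbeta\colon C'\to D$ is a strictly shorter atom with $C\xrightarrow{s_i}C'$. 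By \ref{simplicial and functors ok}\eqref{mut funct iso 2}, $t_\upalpha\cong t_\upbeta\circ t_i$ functorially. Now for $j\neq 0$ I want to test whether $S_j\in\cF_\upalpha$, i.e.\ whether $\Hom_{\Lambda_C}(T_\upalpha,S_j)=0$, equivalently (by the torsion pair dichotomy for the simple $S_j$) whether $t_\upalpha^{-1}(S_j)$ is a shift of a module rather than a module, i.e.\ concentrated in degree $1$ after applying $-\otimes^{\bf L}T_\upalpha$. Write $t_\upalpha^{-1}\cong t_i^{-1}\circ t_\upbeta^{-1}$.

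The key case analysis: by the inductive hypothesis applied to $\upbeta$, $S_j\in\cF_\upbeta$ iff $s_j\in\Begin(\upbeta)$, and otherwise $S_j\in\cT_\upbeta$. If $s_j\notin\Begin(\upbeta)$, then $t_\upbeta^{-1}(S_j)\in\cT_\upbeta\subseteq\fl\Lambda_{C'}$ is an honest module; I then need to understand $t_i^{-1}$ of this module and show it is a module (i.e.\ $S_j\in\cT_\upalpha$, so $s_j\notin\Begin(\upalpha)$) precisely when $s_j\neq s_i$, using that $s_i\circ\upbeta$-type considerations and \ref{standard Cox}\eqref{standard Cox 1}: $s_j\in\Begin(\upalpha)=\Begin(\upbeta s_i)$ iff either $j=i$ or $s_j\in\Begin(\upbeta)$. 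The delicate part is controlling $t_i^{-1}$ on the module $t_\upbeta^{-1}(S_j)$: this is where \ref{cohom} and a careful look at the torsion pair $(\cX_i,\cY_i)$ on $\fl\Lambda_{C'}$ enter — one shows $t_\upbeta^{-1}(S_j)$ lies in $\cY_i$ (no $S_i$-composition factors survive the earlier mutations in the relevant sense) unless $j=i$. The main obstacle I anticipate is exactly this: tracking the interaction of the two torsion pairs (the one from $\upbeta$ on $\fl\Lambda_{C'}$ and the one from $s_i$ on $\fl\Lambda_{C'}$), and in particular verifying that the module $t_\upbeta^{-1}(S_j)$ sits on the correct side of the $s_i$-torsion pair so that applying $t_i^{-1}$ does not introduce a shift. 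I would handle this by showing $\Hom_{\Lambda_{C'}}(\cdot)$-vanishing via the fact that $\upalpha=\upbeta s_i$ being an atom forces $\upalpha$ not to recross the wall of $s_i$ (\ref{2.14}), which translates into the needed $\Ext$/$\Tor$-vanishing between $t_\upbeta^{-1}(S_j)$ and $T_i$; combined with the degree bound from \ref{cohom} this pins down $t_\upalpha^{-1}(S_j)$ to a single degree and identifies which one.
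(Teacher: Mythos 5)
There is a genuine gap: the inductive step of part (1), which you correctly identify as carrying ``the entire content'' of the lemma, is not actually carried out. You flag the ``delicate part'' --- showing that $t_\upbeta^{-1}(S_j)$ lands in $\cY_i$ unless $j=i$, i.e.\ controlling the interaction between the torsion pair coming from $\upbeta$ and the one coming from $s_i$ on $\fl\Lambda_{C'}$ --- and then only describe how you \emph{would} handle it, via an unspecified $\Ext$/$\Tor$-vanishing extracted from \ref{2.14}. That is precisely the hard point, and it is left unresolved. There is also a conflation in the setup: membership of $S_j$ in $\cF_\upalpha$ is tested by the \emph{forward} functor $\RHom_{\Lambda_C}(T_\upalpha,-)$ applied to $S_j\in\fl\Lambda_C$, whereas $t_\upalpha^{-1}=-\otimes^{\bf L}_{\Lambda_D}T_\upalpha$ tests the pair $(\cX_\upalpha,\cY_\upalpha)$ on $\fl\Lambda_D$; your inductive step mixes these. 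Separately, the reduction of (3) to (1) ``applied to $\upalpha^{-1}$'' is shaky: the tilting functor of the reverse atom $D\to C$ is $\RHom_{\Lambda_D}(\Hom_{\mathfrak{R}}(N_D,N_C),-)$, which is \emph{not} $t_\upalpha^{-1}$ (flop--flop is not the identity), so the identification $\cF_{\upalpha^{-1}}=\cX_\upalpha$ you assert would itself need an argument.

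The paper's proof shows that no induction, and no tracking of the simples $S_j$ for $j\neq i$, is needed: for the fixed $i$ one simply applies \ref{cohom} to \emph{two} atoms and intersects cohomological amplitudes. For ($\Leftarrow$), writing $\upalpha=\upbeta\circ s_i$ (so $\upbeta$ is again an atom), \ref{simplicial and functors ok} and \ref{Si under inverse} give $t_\upalpha(S_i)=t_\upbeta(S_i)[-1]$; the left side has cohomology only in degrees $\{0,1\}$ and the right side only in $\{1,2\}$, so $t_\upalpha(S_i)$ is concentrated in degree $1$ and $S_i\in\cF_\upalpha$. For ($\Rightarrow$) one takes the contrapositive: if $s_i\notin\Begin(\upalpha)$ then $\upalpha s_i$ is an atom by \ref{standard Cox}, whence $t_\upalpha(S_i)=t_{\upalpha s_i}(S_i)[1]$ has amplitude $\{0,1\}\cap\{-1,0\}=\{0\}$, so $S_i\in\cT_\upalpha$. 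Parts (3) and (4) follow by the same two-line argument applied to $t_\upalpha^{-1}$, whose amplitude is $\{-1,0\}$ since $\pd T_\upalpha\leq 1$. I would encourage you to notice that \ref{cohom} is available for \emph{every} atom, not just the one you start with; exploiting it for the auxiliary atoms $\upbeta$ and $\upalpha s_i$ is what collapses the whole induction.
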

\begin{proof}
We prove \eqref{Key SY lemma 1}, with all others being similar. \\
($\Leftarrow$) Suppose that $\upalpha$ starts with $s_i$, and write $\upalpha=\upbeta\circ s_i$.  Then $t_\upalpha(S_i)\stackrel{\ref{Si under inverse}}{=}t_\upbeta(S_i)[-1]$. Applying \ref{cohom} to both sides, it follows that $H^j(t_\upalpha(S_i))=0$ for all $j\neq 1$, so $S_i\in \cF_\upalpha$. \\
($\Rightarrow$) Suppose that $\upalpha$ does not start with $s_i$, then by \ref{standard Cox} $\upalpha\circ s_i$ is still a reduced expression of Deligne normal form.  Hence $t_{\upalpha s_i}=t_\upalpha\circ t_{s_i}$, and so  $t_{\upalpha s_i}(S_i)\stackrel{\ref{Si under inverse}}{=}t_{\upalpha}(S_i)[-1]$.  Thus  $t_{\upalpha}(S_i)=t_{\upalpha s_i}(S_i)[1]$, so again applying \ref{cohom} to both sides, it follows that $H^j(t_\upalpha(S_i))=0$ for all $j\neq 0$, so $S_i\in \cT_\upalpha$.  In particular, $S_i\notin \cF_\upalpha$.
\end{proof}

\begin{lemma}\label{S0 ok}
Under the assumptions \ref{flops setup}, suppose that $\upalpha\colon C\to D$ is an atom.  Then $S_0\in\cT_\upalpha$ and $S_0\in\cY_\upalpha$.
\end{lemma}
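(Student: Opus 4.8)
The plan is to treat the two assertions separately, and in each case to rule out membership of $S_0$ in the \emph{other} half of the relevant torsion pair, then invoke the elementary fact that a simple module — having only $0$ and itself as submodules — lies in exactly one of the two classes of any torsion pair on $\fl\Lambda_C$ (resp.\ $\fl\Lambda_D$). Heuristically this is the ``$i=0$ case'' of \ref{Key SY lemma}: there is no simple wall crossing $s_0$, since the summand $P_0$ (corresponding to the distinguished $\mathfrak{R}$-summand) is never mutated, so the conditions $s_0\in\Begin(\upalpha)$ and $s_0\in\Ends(\upalpha)$ are vacuously false, forcing $S_0\in\cT_\upalpha$ and $S_0\in\cY_\upalpha$. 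To make this rigorous I would instead exploit the projective summand of the tilting bimodule directly.

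Concretely, I would first record that, writing $T_\upalpha=\upnu_{\!\upalpha}\Lambda_C=\Hom_{\mathfrak{R}}(N_C,N_D)$ as in \ref{t for length one}, the indecomposable projective $P_0$ at vertex $0$ is a direct summand of $T_\upalpha$ both as a right $\Lambda_C$-module and as a left $\Lambda_D$-module: the $\mathfrak{R}$-summand of $N_D$ contributes $\Hom_{\mathfrak{R}}(N_C,\mathfrak{R})\cong e_0\Lambda_C$ as a right-module summand (this is precisely the statement $T_\upalpha\in\tilt_0\Lambda_C$), while the $\mathfrak{R}$-summand of $N_C$ contributes $\Hom_{\mathfrak{R}}(\mathfrak{R},N_D)\cong N_D\cong\Lambda_D e_0$ as a left-module summand. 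For $\cT_\upalpha$: since the inclusion $P_0\hookrightarrow T_\upalpha$ splits, restriction gives a surjection $\Hom_{\Lambda_C}(T_\upalpha,S_0)\twoheadrightarrow\Hom_{\Lambda_C}(e_0\Lambda_C,S_0)\cong S_0e_0=S_0\neq0$, so $\Hom_{\Lambda_C}(T_\upalpha,S_0)\neq0$, i.e.\ $S_0\notin\cF_\upalpha$; as $(\cT_\upalpha,\cF_\upalpha)$ restricts to a torsion pair on $\fl\Lambda_C$ and $S_0$ is simple, we conclude $S_0\in\cT_\upalpha$. For $\cY_\upalpha$: tensoring over $\Lambda_D$ with the split summand $\Lambda_D e_0$ of $T_\upalpha$ shows that $S_0\otimes_{\Lambda_D}T_\upalpha$ contains $S_0\otimes_{\Lambda_D}\Lambda_De_0\cong S_0e_0=S_0$ as a direct summand, hence is nonzero, so $S_0\notin\cX_\upalpha$; again by the torsion-pair dichotomy for the simple $S_0$ on $\fl\Lambda_D$ we get $S_0\in\cY_\upalpha$.

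I expect the main obstacle — in what is otherwise an essentially formal argument — to be the consistent bookkeeping of the index $0$: one must check that the projective summand of the tilting bimodule $T_\upalpha$ arising from the $\mathfrak{R}$-summand really is the projective $e_0\Lambda_C$ (resp.\ $\Lambda_De_0$) paired with the simple $S_0$ in the sense of \ref{not new}, and on the \emph{correct} side in each of the two cases. This is forced by $\mathfrak{R}$ being the distinguished common summand of all the CT modules $N_C$ that is never mutated, but it is the step where the conventions of \S\ref{subsection tilting chambers} and \S\ref{atoms and order} have to be lined up with care. Everything else is standard: the identifications $\Hom_A(eA,M)\cong Me$ and $M\otimes_AAe\cong Me$, the equality $S_0e_0=S_0$ (as $S_0$ is supported at vertex $0$ and the algebras are basic), and the fact that a simple module lies in the torsion or the torsion-free part of any torsion pair.
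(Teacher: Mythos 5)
Your proposal is correct and takes essentially the same route as the paper: both halves rest on the fact that $T_\upalpha$ has the vertex-$0$ projective as a direct summand on each side of the bimodule ($\Hom_{\mathfrak{R}}(N_C,\mathfrak{R})\cong e_0\Lambda_C$ as a right $\Lambda_C$-module, $\Hom_{\mathfrak{R}}(\mathfrak{R},N_D)\cong\Lambda_De_0$ as a left $\Lambda_D$-module), forcing $\Hom_{\Lambda_C}(T_\upalpha,S_0)\neq0$ and $S_0\otimes_{\Lambda_D}T_\upalpha\neq0$. The only difference is cosmetic: for the second half the paper identifies the same left summand abstractly as $Be_0'=\Hom_{\Lambda_C}(e_0\Lambda_C,T_\upalpha)$ inside $B\colonequals\End_{\Lambda_C}(T_\upalpha)\cong\Lambda_D$ and then applies the duality $D(S_0\otimes_BBe_0')\cong\Hom_{B^{\op}}(Be_0',DS_0)$ from Cartan--Eilenberg, whereas you read the summand off the decomposition of $N_C$ and compute $S_0\otimes_{\Lambda_D}\Lambda_De_0\cong S_0e_0\neq0$ directly.
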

\begin{proof}
The first statement holds since $P_0$ is a summand of $T_\upalpha$, so $\Hom_{\Lambda_C}(T_\upalpha,S_0)\neq 0$.  Thus  $S_0\notin\cF_{\upalpha}$ and so since $S_0$ is simple, necessarily $S_0\in\cT_{\upalpha}$.

For the second statement is similar, but uses the duality on tilting modules, so we sketch the proof. To ease notation set $A\colonequals \Lambda_C$, $B\colonequals \Lambda_D$, and $T\colonequals T_\upalpha$.  By convention the simple right $A$-module $S_0$ corresponds to the indecomposable projective $P_0$ of $A$, so consider the idempotent $e_0$ such that $P_0=e_0A$.   Similarly, $B\cong\End_A(T)$ has an idempotent $e_0'$ corresponding to the summand $e_0A$ in the decomposition $T=e_0A\oplus X$ as right $A$-modules.  By convention $S_0$ is the simple right $B$-module corresponding to $e_0'B$, so that the $k$-dual $DS_0$ is the simple left $B$-module corresponding to $Be_0'$.  It follows that $\Hom_{B^{\op}}(Be_0',DS_0)\neq 0$.

We first claim that $Be_0'$ is a summand of ${}_BT$.   By construction, it is clear that $Be_0'=\Hom_A(e_0A,T)$ as left $B$-modules.  As in  \cite[p33]{BBtilting}, the functor
 \[
 {}^\star(-)\colonequals \Hom_A(-,T)\colon \mod A\to \mod B^{\op}
 \]
clearly takes $A_A\mapsto {}_BT$, and thus since $e_0A$ is a summand of $A_A$, by applying ${}^\star(-)$ we see that ${}^\star(e_0A)\cong Be_0'$ is a summand of ${}_BT$.

Now by \cite[VI.5.1]{CE} there is an isomorphism
\[
D(S_0\otimes_B Be_0')\cong \Hom_{B^{\op}}(Be_0',DS_0),
\]
which is non-zero by above.  Thus $S_0\otimes_B Be_0'\neq 0$.  Since by above $S_0\otimes_B T$ has summand $S_0\otimes_B Be_0'$, it follows that $S_0\otimes_BT\neq 0$.  Hence $S_0\notin\cX_\upalpha$, so again since $S_0$ is simple, necessarily $S_0\in\cY_\upalpha$. 
\end{proof}

\begin{cor}\label{final cor}
Under the assumptions \ref{flops setup}, suppose that $\upalpha\colon C\to D$ is an atom.  If $N\in\cF_\upalpha$ is nonzero, then there exists some $j\neq 0$ such that $\upalpha$ starts with $s_j$, and further $\Hom_{\Lambda_C}(S_j,N)\neq 0$. 
\end{cor}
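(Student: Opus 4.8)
The plan is to exploit two features already in place: that $\cF_\upalpha$ is the torsion-free class of the torsion pair $(\cT_\upalpha,\cF_\upalpha)$ on $\fl\Lambda_C$, hence closed under submodules, and that any nonzero finite length module has a simple submodule. So first I would choose a minimal nonzero submodule $S\subseteq N$, which is necessarily simple. Since $N\in\cF_\upalpha$ and $\cF_\upalpha$ is closed under subobjects, $S\in\cF_\upalpha$; and trivially $\Hom_{\Lambda_C}(S,N)\neq 0$ as $S$ is a submodule of $N$.

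The remaining step is to identify $S$ among the simples $S_0,S_1,\hdots,S_n$ of $\Lambda_C$ and read off the consequence. By \ref{S0 ok}, $S_0\in\cT_\upalpha$; since $S_0$ is a nonzero simple and $\cT_\upalpha\cap\cF_\upalpha=0$, it cannot lie in $\cF_\upalpha$, so $S\cong S_j$ for some $j\neq 0$. Now \ref{Key SY lemma}\eqref{Key SY lemma 1} says that for $j\neq 0$ one has $S_j\in\cF_\upalpha$ if and only if $s_j\in\Begin(\upalpha)$; applying this to $S_j=S\in\cF_\upalpha$ yields that $\upalpha$ begins with $s_j$. Together with $\Hom_{\Lambda_C}(S_j,N)\neq 0$ from the first step, this is precisely the claim.

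I do not expect a genuine obstacle: this corollary is essentially a formal packaging of \ref{Key SY lemma} and \ref{S0 ok}, whose substantive content — the translation of membership in the torsion classes into the combinatorics of $\Begin$, and the placement of $S_0$ in $\cT_\upalpha\cap\cY_\upalpha$ — has already been established. The only points requiring a moment's care are that the torsion pair in question is the one restricted to $\fl\Lambda_C$ (which is exactly how $(\cT_\upalpha,\cF_\upalpha)$ was defined, so invoking closure under submodules is legitimate), and that the hypothesis ``$N$ nonzero'' is used precisely to guarantee the existence of a simple submodule.
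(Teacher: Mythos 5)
Your proposal is correct and follows essentially the same route as the paper's own proof: take a simple submodule $S_j\hookrightarrow N$, use closure of $\cF_\upalpha$ under submodules together with \ref{S0 ok} to rule out $j=0$, and conclude via \ref{Key SY lemma}\eqref{Key SY lemma 1}. The only cosmetic difference is that you produce the simple submodule as a minimal nonzero submodule rather than via a filtration by simples.
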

\begin{proof}
Certainly $N$ is filtered by simples, so there exists some $0\leq j\leq n$ with $S_j\hookrightarrow N$.  In particular $\Hom(S_j,N)\neq 0$. Since $\cF_{\upalpha}$ is closed under submodules $S_j\in\cF_\upalpha$, and so by \ref{S0 ok} necessarily $j\neq 0$. The result then follows from \ref{Key SY lemma}\eqref{Key SY lemma 1}.
\end{proof}

\section{Proof of Faithfulness}\label{faithful proof section}

Keeping the notation in the previous sections, under the flops setup of \ref{flops setup}, recall from \ref{not new} that every chamber $D$ has an associated algebra $\Lambda_D$ and simple modules $S_0, S_1,\hdots,S_n$, and we set $\cS\colonequals \bigoplus_{i=0}^nS_i$.  As in the Conventions, we write $[a,b\kern 1pt]_t=\Hom_{\Db(\Lambda_D)}(a,b[t])$.   Although the $D$ is suppressed in this notation, it will be clear from the context in which category to view $\cS$.

We will reduce to a key technical lemma in \ref{BT main}, which is an analogue of \cite[Prop.\ 3.1]{BT}.  The key point in Brav--Thomas is to first find an object  $b$ such that  
\begin{equation}
[\cS, b\kern 1pt]_{\geq d+1}= 0,\label{to control}
\end{equation}
where $d=\dim \mathfrak{R}$. For this  there are many choices. To ensure that the method below can be used in future papers to cover situations where $\Lambda$ has infinite global dimension (or flopping contractions $U\to\Spec \mathfrak{R}$ where $U$ need not be smooth), throughout we choose $b=\Lambda$, as is justified in the following lemma.

\begin{lemma}\label{b ok for modifying}
Suppose that $\mathfrak{R}$ is a $d$-dimensional complete local Gorenstein ring, and that $\Lambda\cong\End_{\mathfrak{R}}(M)$ for some $M\in\refl \mathfrak{R}$, with $\Lambda\in\CM \mathfrak{R}$ (that is, $\Lambda$ is a modifying $\mathfrak{R}$-algebra).  Then $b\colonequals \Lambda$ satisfies
\begin{eqnarray}
[S_i,b\kern 1pt]_d\neq0\mbox{ for all }0\leq i\leq n,\quad\mbox{and}\quad[\cS,b\kern 1pt]_{\geq d+1}=0.\label{start induction 22}
\end{eqnarray} 
\end{lemma}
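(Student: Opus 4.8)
The plan is to work with the local cohomology / $\Ext$-duality that is available because $\mathfrak{R}$ is Gorenstein of dimension $d$ and $\Lambda$ is a (module-finite) modifying $\mathfrak{R}$-algebra, so that $\Lambda\in\CM\mathfrak{R}$. Since $\Lambda=\End_{\mathfrak{R}}(M)$ with $M\in\refl\mathfrak{R}$ and $\Lambda$ is maximal Cohen--Macaulay over $\mathfrak{R}$, every indecomposable projective $P_i$ is a reflexive $\mathfrak{R}$-module which is MCM, so $\depth_{\mathfrak{R}}P_i=d$. On the other side, each simple $\Lambda$-module $S_i$ has finite length, hence $\depth_{\mathfrak{R}}S_i=0$. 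The key input is the graded-commutative Nakayama-type duality
\[
\Ext^{j}_{\Lambda}(S_i,\Lambda)\;\cong\; \text{(something computed via local duality over }\mathfrak{R}\text{)},
\]
more precisely the spectral sequence $\Ext^{p}_{\mathfrak{R}}(\Ext^{q}_{\Lambda}(S_i,\Lambda),\mathfrak{R})\Rightarrow \text{stuff}$, together with the fact that for a finite-length $\Lambda$-module $N$ one has $\Ext^{j}_{\mathfrak{R}}(N,\mathfrak{R})=0$ for $j\neq d$ and $\Ext^{d}_{\mathfrak{R}}(N,\mathfrak{R})=D(N)$ the Matlis dual. This immediately forces $[\cS,\Lambda]_{\geq d+1}=\Ext^{\geq d+1}_{\Lambda}(\cS,\Lambda)=0$, since the Ext over $\Lambda$ of a finite-length module into an MCM module can have no cohomology above degree $d$: indeed $\RHom_{\mathfrak{R}}(S_i,\Lambda)\simeq D(S_i)\otimes_{\mathfrak{R}}^{\bf L}(\text{something})[-d]$ is concentrated in a single $\mathfrak{R}$-degree, and $\RHom_{\Lambda}(S_i,\Lambda)$ is a summand of the restriction of this along a finite resolution, so it lives in cohomological degrees $\leq d$.

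For the non-vanishing $[S_i,\Lambda]_d\neq0$, I would argue as follows. By local duality over the Gorenstein ring $\mathfrak{R}$, for a finite-length $\Lambda$-module $N$ we have $\Ext^{d}_{\mathfrak{R}}(N,\mathfrak{R})\cong D(N)\neq 0$. Now use the change-of-rings: $\RHom_{\mathfrak{R}}(S_i,\Lambda)\cong\RHom_{\Lambda}(S_i,\RHom_{\mathfrak{R}}(\Lambda,\Lambda))$... — more cleanly, since $\Lambda$ is MCM over $\mathfrak{R}$ and $\mathfrak{R}$ is Gorenstein, $\RHom_{\mathfrak{R}}(\Lambda,\mathfrak{R})$ is again an MCM $\Lambda$-bimodule (the "$\mathfrak{R}$-dual" $\Lambda$-bimodule), and one has a functorial isomorphism
\[
\Ext^{d}_{\mathfrak{R}}(S_i,\Lambda)\;\cong\;\Hom_{\Lambda}\!\big(\Ext^{d}_{\mathfrak{R}}(\Lambda,\Lambda)\;??\big).
\]
I would instead avoid bimodule gymnastics and proceed concretely: take a projective $\Lambda$-resolution $P_\bullet\to S_i$; then $[S_i,\Lambda]_d = H^{d}\Hom_{\Lambda}(P_\bullet,\Lambda)$. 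Applying $\RHom_{\mathfrak{R}}(-,\mathfrak{R})$ and using that $\Lambda$ is MCM so $\RHom_{\mathfrak{R}}(\Lambda,\mathfrak{R})=:\Lambda^\vee$ is a module in degree $0$, the object $\RHom_{\Lambda}(S_i,\Lambda)$ is $\mathfrak{R}$-dual to $S_i\otimes^{\bf L}_{\Lambda}\Lambda^\vee$ restricted appropriately; since $S_i$ is finite length and $\Lambda^\vee$ has a surjection onto some simple (it is a reflexive algebra with the same simples), the top nonvanishing $\Ext$ is in degree exactly $d$ and equals the Matlis dual of a nonzero finite-length module. This gives $[S_i,\Lambda]_d\neq 0$ for every $i$, including $i=0$, where $P_0=\Hom_{\mathfrak{R}}(M,\mathfrak{R})$ is exactly the reflexive summand.

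The step I expect to be the genuine obstacle is making the duality $\RHom_{\mathfrak{R}}(-,\mathfrak{R})$ interact cleanly with $\RHom_{\Lambda}(-,\Lambda)$ while only assuming $\Lambda$ is \emph{modifying} (so possibly of infinite global dimension and not symmetric over $\mathfrak{R}$). The clean statement I want is: for $N\in\fl\Lambda$ there is a convergent spectral sequence or an outright isomorphism $\RHom_{\Lambda}(N,\Lambda)^\vee\simeq N\otimes^{\bf L}_{\Lambda}\Lambda$ sitting in $\mathfrak{R}$-degree $d$ — this is essentially the statement that $\Lambda$, being MCM over the Gorenstein ring $\mathfrak{R}$, satisfies a one-sided Auslander--Gorenstein-type condition in the relevant range. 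I would isolate this as a lemma proved by a standard hyperext spectral sequence $E_2^{p,q}=\Ext^p_{\mathfrak{R}}(\Ext^{-q}_{\Lambda}(N,\Lambda),\mathfrak{R})$, observing that the $\mathfrak{R}$-depth of each $\Ext^{-q}_{\Lambda}(N,\Lambda)$ is bounded below (it is a finite-length module or zero in the relevant degrees because $N$ is finite length and $\Lambda$ is MCM), collapsing the sequence into the edge $p=d$. Granting that lemma, both halves of \eqref{start induction 22} drop out; the vanishing half is immediate and the non-vanishing half follows because Matlis duality is faithful.
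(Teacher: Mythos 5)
Your overall strategy is the right one, and you have correctly located the crux: everything reduces to comparing $\RHom_\Lambda(N,\Lambda)$ with $\RHom_{\mathfrak{R}}(N,\mathfrak{R})$ for finite length $N$, after which local duality over the Gorenstein ring $\mathfrak{R}$ gives both halves of \eqref{start induction 22} at once. This is exactly how the paper argues, quoting \cite[3.4(5)]{IR} for the isomorphism $\Ext^t_\Lambda(S_i,\Lambda)\cong\Ext^t_{\mathfrak{R}}(S_i,\mathfrak{R})$. However, your proposed way of establishing that comparison has a genuine gap. Adjunction gives $\RHom_{\mathfrak{R}}(N,\mathfrak{R})\cong\RHom_\Lambda(N,\Hom_{\mathfrak{R}}(\Lambda,\mathfrak{R}))$, the inner dual being concentrated in degree $0$ because $\Lambda\in\CM\mathfrak{R}$; but this computes maps into the bimodule $\Lambda^\vee\colonequals\Hom_{\mathfrak{R}}(\Lambda,\mathfrak{R})$, not into $\Lambda$. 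The missing ingredient is the bimodule isomorphism $\Lambda^\vee\cong\Lambda$, i.e.\ that $\End_{\mathfrak{R}}(M)$ is a \emph{symmetric} $\mathfrak{R}$-order when $M$ is reflexive; you never establish (or even state) this. Your ``summand of the restriction along a finite resolution'' justification for the vanishing is not a valid argument, since $\RHom_\Lambda(S_i,\Lambda)$ is not a summand of $\RHom_{\mathfrak{R}}(S_i,\Lambda)$.

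Moreover, the lemma you propose to isolate is proved circularly. The hyperext spectral sequence $E_2^{p,q}=\Ext^p_{\mathfrak{R}}(\Ext^{q}_\Lambda(N,\Lambda),\mathfrak{R})$ does collapse onto $p=d$, since each $\Ext^q_\Lambda(N,\Lambda)$ has finite length; but what it converges to is the $\mathfrak{R}$-dual complex $\RHom_{\mathfrak{R}}(\RHom_\Lambda(N,\Lambda),\mathfrak{R})$, and identifying \emph{that} with $N$ placed in degree $d$ is equivalent to the statement $\RHom_\Lambda(N,\Lambda)\cong D(N)[-d]$ (with $D$ the Matlis dual) that you are trying to prove. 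It is precisely the Auslander--Gorenstein-type input that you name but do not verify, and it fails for a general $\mathfrak{R}$-order; it holds here only because of the symmetry. Once the isomorphism $\Hom_{\mathfrak{R}}(\Lambda,\mathfrak{R})\cong\Lambda$ is in hand (for $\Lambda=\End_{\mathfrak{R}}(M)$ with $M$ reflexive this follows from reflexive equivalence, and is packaged in \cite[3.4]{IR}), no spectral sequence is needed: $\RHom_\Lambda(S_i,\Lambda)\cong\RHom_{\mathfrak{R}}(S_i,\mathfrak{R})$, and local duality for the finite length (hence depth zero) module $S_i$ gives vanishing above degree $d$ and non-vanishing in degree $d$ simultaneously.
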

\begin{proof}
We know that $\Ext^t_\Lambda(S_i,b)\colonequals\Ext^t_\Lambda(S_i,\Lambda)\cong\Ext^t_\mathfrak{\mathfrak{R}}(S_i,\mathfrak{R})$, where the last isomorphism is \cite[3.4(5)]{IR}.  Hence by local duality
\[
\depth_\mathfrak{R}S_i=d-\sup\{t\geq 0\mid [S_i,b\kern 1pt]_t\neq 0\}.
\]
Clearly, being finite length, $\depth_\mathfrak{R}S_i=0$, so we deduce that \eqref{start induction 22} holds.
\end{proof}



\subsection{The Main Result}\label{section main}
Throughout this subsection we will work under the setting of \ref{flops setup}, and write $b\colonequals \Lambda$.  The initial step requires the following elementary lemma.

\begin{lemma}\label{trivial}
Suppose that $0\neq N\in\fl\Lambda_D$.
\begin{enumerate}
\item\label{trivial 1} If $y\in\Db(\mod\Lambda_D)$ is such that $[\cS,y\kern 1pt]_{\geq p}=0$, then $[N,y\kern 1pt]_{\geq p}=0$. 
\item\label{trivial 2}  $[N,b\kern 1pt]_{d}\neq 0$ and $[N,b\kern 1pt]_{\geq d+1}=0$.
\end{enumerate}
\end{lemma}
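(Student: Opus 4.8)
The plan is to prove both parts by dévissage on the length $\ell(N)$, reducing everything to the facts about the simple modules $S_0,\dots,S_n$ already established in \ref{b ok for modifying}. Recall from \ref{not new} that $\Lambda_D$ is a basic algebra with exactly the $n+1$ simples $S_0,\dots,S_n$, so every object of $\fl\Lambda_D$ is filtered by these.

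For the first part, I would induct on $\ell(N)$. When $\ell(N)=1$ the module $N$ is isomorphic to some $S_i$, and since $\cS=\bigoplus_{j=0}^{n}S_j$ the group $[S_i,y]_t$ is a direct summand of $[\cS,y]_t$, which vanishes for $t\geq p$ by hypothesis. For $\ell(N)>1$, choose a maximal submodule $N'\subsetneq N$, giving a short exact sequence $0\to N'\to N\to S_i\to 0$ with $\ell(N')<\ell(N)$. Applying $\Hom_{\Db(\mod\Lambda_D)}(-,y[t])$ yields an exact sequence $[S_i,y]_t\to[N,y]_t\to[N',y]_t$; for $t\geq p$ the left-hand term vanishes by the base case and the right-hand term by the inductive hypothesis, so $[N,y]_t=0$.

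For the second part, I would first note that under \ref{flops setup} the base $\mathfrak{R}$ is a $d$-dimensional ($d=3$) complete local Gorenstein ring and each $\Lambda_D$ is a modifying (in fact non-commutative crepant) $\mathfrak{R}$-algebra lying in $\CM\mathfrak{R}$, so \ref{b ok for modifying} applies with $\Lambda_D$ in place of $\Lambda$ and gives $[S_i,b]_d\neq 0$ for all $i$ together with $[\cS,b]_{\geq d+1}=0$. The vanishing $[N,b]_{\geq d+1}=0$ is then immediate from the first part applied to $y=b$ and $p=d+1$. For the non-vanishing $[N,b]_d\neq 0$ I would induct on $\ell(N)$ once more: the case $\ell(N)=1$ is \ref{b ok for modifying}, and for the inductive step, applying $\Hom_{\Db(\mod\Lambda_D)}(-,b[\ast])$ to $0\to N'\to N\to S_i\to 0$ with $N'\subsetneq N$ maximal produces the exact fragment $[N,b]_d\to[N',b]_d\to[S_i,b]_{d+1}$; the last term vanishes by what was just shown, so $[N,b]_d\twoheadrightarrow[N',b]_d$, and the latter is nonzero by induction.

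These are entirely routine long-exact-sequence computations. The only points requiring attention are the verification that each $\Lambda_D$ genuinely satisfies the hypotheses of \ref{b ok for modifying} — which is exactly the content of the flops setup together with $\mathfrak{R}$ being Gorenstein of dimension $d$ — and, for the non-vanishing statement, the observation that $[S_i,b]_d\neq 0$ propagates up a composition series precisely because $[\cS,b]_{\geq d+1}=0$. I do not anticipate any genuine obstacle; this lemma is a bookkeeping step preparing for \ref{BT main}.
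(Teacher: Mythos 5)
Your argument is correct and is exactly the paper's (much terser) proof: part (1) by induction on a composition series via the long exact sequence, the vanishing in (2) as an immediate consequence of (1) applied to $b$ using \ref{b ok for modifying}, and the non-vanishing $[N,b\kern 1pt]_d\neq 0$ by the same dévissage using $[S_i,b\kern 1pt]_d\neq 0$ together with $[S_i,b\kern 1pt]_{d+1}=0$ to get surjectivity down the filtration. No issues.
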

\begin{proof}
(1) is an easy induction on the length of the filtration of $N$, using the long exact sequence from $[-,y\kern 1pt]$.\\
(2) By \ref{b ok for modifying} $[\cS,b\kern 1pt]_{\geq d+1}=0$, so the second statement is a consequence of \eqref{trivial 1}. The first also  follows by an induction on the length of the filtration of $N$, using $[S_i,b\kern 1pt]_d\neq 0$ and $[S_i,b \kern 1pt]_{d+1}=0$ for all $0\leq i\leq n$.
\end{proof}

Now for $\upalpha\in\mathds{G}_{\cH}^+$, we can decompose $\upalpha$ into length one atoms $
\upalpha=s_{i_n}\hdots s_{i_1}$
and define $t_\upalpha\colonequals t_{i_n}\circ\hdots \circ t_{i_1}$ (where the $t_{i_t}$ are defined in \ref{t for length one}), or alternatively we can decompose $\upalpha$ into Deligne normal form $\upalpha=\upalpha_k\hdots \upalpha_1$ and define $t_{\upalpha}\colonequals t_{\upalpha_k}\circ\hdots\circ t_{\upalpha_1}$  (where the $t_{\upalpha_i}$ are also defined in \ref{t for length one}).  The crucial point in the proof of faithfulness is that by \ref{simplicial and functors ok}\eqref{mut funct iso 2}  these yield the same functor.

The following is our analogue of the main technical lemma of Brav--Thomas \cite[Prop.\ 3.1]{BT}.  Using torsion pairs, the proof only needs to induct on the number of Deligne factors, whereas Brav--Thomas use a more complicated double induction.

\begin{prop}\label{BT main}
Let $1\neq \upalpha\in \mathds{G}^+_{\cH}$ have Deligne normal form $\upalpha=\upalpha_k\circ\hdots\circ \upalpha_1$.  Then
\begin{enumerate}
\item\label{BT main 1} $[\cS,t_\upalpha b\kern 1pt]_{\geq k+d+1}=0$.
\item\label{BT main 2} $[S_i,t_\upalpha b\kern 1pt]_{k+d}\neq 0$ if and only if $i\neq 0$ and the atom $\upalpha_k$ ends (up to the relations in $\mathds{G}^+_{\cH}$) by passing through wall $i$.  In particular $[\cS,t_\upalpha b\kern 1pt]_{k+d}\neq 0$.
\item\label{BT main 3} The maximal $p$ such that $[\cS,t_\upalpha b\kern 1pt]_p\neq 0$ is precisely $p=k+d$.
\end{enumerate}
\end{prop}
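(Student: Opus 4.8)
The plan is to prove all three parts simultaneously by induction on $k$, the number of Deligne factors of $\upalpha$. The base case $k=1$ is where $\upalpha=\upalpha_1$ is itself an atom: then $t_\upalpha b = t_\upalpha \Lambda$, and I would compute the relevant $\Hom$-groups directly. By adjunction, $[\cS, t_\upalpha b]_t = [\cS, t_\upalpha \Lambda]_t$, and using that $t_\upalpha^{-1}\cong -\otimes^{\mathbf L}_{\Lambda_D}T_\upalpha$ we can instead analyze $[t_\upalpha^{-1}\cS, \Lambda]_t$ in $\Db(\mod\Lambda_C)$. Each $t_\upalpha^{-1}(S_i)$ is, by \eqref{inverse Si}, either a module in $\cT_\upalpha$ (if $S_i\in\cY_\upalpha$) or the shift by $[1]$ of a module in $\cF_\upalpha$ (if $S_i\in\cX_\upalpha$). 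Combined with \ref{trivial}\eqref{trivial 2}, which bounds $[N,b]_{\geq d+1}=0$ and gives $[N,b]_d\neq 0$ for any nonzero finite length $N$, this pins down the top nonvanishing degree: for $S_i\in\cY_\upalpha$ it is $d$, and for $S_i\in\cX_\upalpha$ it is $d+1$. By \ref{Key SY lemma}\eqref{Key SY lemma 3}, $S_i\in\cX_\upalpha$ precisely when $s_i\in\Ends(\upalpha)$, i.e.\ when $\upalpha$ ends by passing through wall $i$; and by \ref{S0 ok}, $S_0\in\cY_\upalpha$ always, so $i=0$ contributes only in degree $d$. This establishes \eqref{BT main 1}, \eqref{BT main 2}, \eqref{BT main 3} for $k=1$ (with $k+d+1 = d+2$ the vanishing bound, $k+d = d+1$ the peak).

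For the inductive step, write $\upalpha = \upalpha_k\circ\upbeta$ where $\upbeta=\upalpha_{k-1}\circ\hdots\circ\upalpha_1$ has Deligne normal form of length $k-1$, so by \ref{simplicial and functors ok}\eqref{mut funct iso 2} we have $t_\upalpha b \cong t_{\upalpha_k}(t_\upbeta b)$. Apply the inductive hypothesis to $t_\upbeta b$: it lives in $\Db(\mod\Lambda_{D'})$ where $D'$ is the target of $\upbeta$, and satisfies $[\cS, t_\upbeta b]_{\geq (k-1)+d+1}=0$ while $[\cS, t_\upbeta b]_{(k-1)+d}\neq 0$, the latter detected exactly by those simples through which $\upalpha_{k-1}$ ends. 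Now $\upalpha_k$ is an atom $D'\to D$, and $t_{\upalpha_k}^{-1}\cong -\otimes^{\mathbf L}_{\Lambda_D}T_{\upalpha_k}$; by \ref{cohom} applied to $\upalpha_k$, the functor $t_{\upalpha_k}$ shifts cohomology by at most $1$, so $[\cS, t_{\upalpha_k}(t_\upbeta b)]_{\geq k+d+1}=0$ follows from the $k-1$ bound by adjunction: $[\cS, t_{\upalpha_k} y]_t = [t_{\upalpha_k}^{-1}\cS, y]_t$ and each $t_{\upalpha_k}^{-1}(S_i)$ has cohomology concentrated in degrees $0$ and $1$ (indeed in a single degree, being in $\cT$ or $\cF[1]$), so by a long exact sequence / spectral sequence argument the Hom vanishes above degree $(k-1+d) + 1 = k+d$. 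This gives \eqref{BT main 1}.

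The heart of the argument—and the step I expect to be the main obstacle—is showing the peak degree $k+d$ is actually achieved, i.e.\ \eqref{BT main 2} and hence \eqref{BT main 3}. The key is that $\upalpha_k\circ\upalpha_{k-1}$ is \emph{not} an atom (this is what it means for the Deligne normal form to break between these factors), so there is some wall $j$ through which $\upalpha_{k-1}$ ends but $\upalpha_k$ re-crosses; equivalently $s_j\in\Ends(\upalpha_{k-1})\cap\Begin(\upalpha_k)$. By the inductive hypothesis \eqref{BT main 2}, $[S_j, t_\upbeta b]_{k-1+d}\neq 0$, so $S_j[-(k-1+d)]$ contributes a summand-like piece at the top of $t_\upbeta b$. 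Since $s_j\in\Begin(\upalpha_k)$, by \ref{Key SY lemma}\eqref{Key SY lemma 1} we have $S_j\in\cF_{\upalpha_k}$, whence $t_{\upalpha_k}^{-1}(S_j)$ is a shift by $[1]$ of a nonzero module in $\cF_{\upalpha_k}$, pushing the contribution up to degree $k+d$. One must then argue this top piece is not killed by differentials in the relevant hypercohomology spectral sequence: here I would use \ref{final cor}—any nonzero $N\in\cF_{\upalpha_k}$ admits $S_{j'}\hookrightarrow N$ with $j'\neq 0$ and $s_{j'}\in\Begin(\upalpha_k)$—together with \ref{trivial}\eqref{trivial 2} to see that $[N, b]_d\neq 0$, and then track carefully that the composite $[t_{\upalpha_k}^{-1}S_i, t_\upbeta b]$ genuinely detects a nonzero class at degree $k+d$, with $i$ ranging exactly over walls through which $\upalpha_k$ ends (using \ref{Key SY lemma}\eqref{Key SY lemma 3} for $\upalpha_k$, and \ref{S0 ok} to exclude $i=0$). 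The delicate bookkeeping is ensuring no cancellation between the contributions of different simples $S_i$ and no interference from the lower-degree cohomology of $t_\upbeta b$; I would handle this by isolating the top cohomology $H^{k-1+d}$-type term of the relevant complex and showing the induced map on that graded piece is injective, mirroring the structure of \cite[Prop.\ 3.1]{BT} but with torsion pairs replacing their explicit spherical-twist computations.
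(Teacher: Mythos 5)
Your overall strategy coincides with the paper's: induction on the number of Deligne factors, with the base case and the vanishing statement \eqref{BT main 1} handled exactly as you describe, by splitting each simple $S_i$ according to the torsion pair $(\cX_{\upalpha_k},\cY_{\upalpha_k})$ and feeding $t_{\upalpha_k}^{-1}(S_i)$ (a finite length module, or the shift of one, by \eqref{inverse Si}) into \ref{trivial}. The one step you have not actually carried out is the one you flag as the main obstacle, the nonvanishing in \eqref{BT main 2}, and your sketch of it contains two slips. First, you write that $s_j\in\Begin(\upalpha_k)$ gives $S_j\in\cF_{\upalpha_k}$ and hence that $t_{\upalpha_k}^{-1}(S_j)$ is a shifted module: this conflates the two torsion pairs, since $\cF_{\upalpha_k}$ lives on the source category $\fl\Lambda_{D'}$ and governs $t_{\upalpha_k}$, whereas $t_{\upalpha_k}^{-1}$ applied to a simple of $\Lambda_D$ is governed by $(\cX_{\upalpha_k},\cY_{\upalpha_k})$ on the target. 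Second, the nonvanishing you ultimately need is against $t_\upbeta b$, not against $b$, so $[N,b\kern 1pt]_d\neq 0$ is not the relevant input.

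The correct closing move is much shorter than the spectral-sequence bookkeeping you propose. For $S_i\in\cX_{\upalpha_k}$ one has $t_{\upalpha_k}^{-1}(S_i)\cong N[1]$ with $0\neq N\in\cF_{\upalpha_k}$, so $[S_i,t_\upalpha b\kern 1pt]_{k+d}=[N,t_\upbeta b\kern 1pt]_{k+d-1}$. By \ref{final cor} there is $j\neq 0$ with $s_j\in\Begin(\upalpha_k)$ and $S_j\hookrightarrow N$; writing $C$ for the cokernel, the long exact sequence gives
\[
[N,t_\upbeta b\kern 1pt]_{k+d-1}\to [S_j,t_\upbeta b\kern 1pt]_{k+d-1}\to [C,t_\upbeta b\kern 1pt]_{k+d}=0,
\]
where the vanishing on the right is the inductive hypothesis \eqref{BT main 1} combined with \ref{trivial}\eqref{trivial 1}. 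Since $s_j\in\Begin(\upalpha_k)$ forces $s_j\in\Ends(\upalpha_{k-1})$ (else $s_j\circ\upalpha_{k-1}$ would be an atom by \ref{standard Cox}, contradicting Deligne normal form), the inductive hypothesis \eqref{BT main 2} gives $[S_j,t_\upbeta b\kern 1pt]_{k+d-1}\neq 0$, and the displayed surjection finishes the argument. In particular the map to control is a surjection out of $[N,t_\upbeta b\kern 1pt]_{k+d-1}$, not an injection on a graded piece, and there is no possible cancellation between different simples because \eqref{BT main 2} is a statement about each $S_i$ separately and $t_{\upalpha_k}^{-1}(S_i)$ is concentrated in a single cohomological degree.
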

\begin{proof}
Statement \eqref{BT main 3} follows immediately from \eqref{BT main 1} and \eqref{BT main 2}, so we prove both \eqref{BT main 1} and \eqref{BT main 2} together using induction on the number of Deligne factors.

\medskip
\noindent
\textbf{Base Case: $k=1$}, i.e.\ $\upalpha$ is an atom.  Since $S_i$ is simple, there are only two cases, namely  $S_i\in\cY_\upalpha$ or $S_i\in\cX_\upalpha$, and using \ref{Key SY lemma} and \ref{S0 ok} we can characterise these:

\medskip
\noindent
(a) $S_i\in\cY_\upalpha$ (equivalently, $i=0$, or $i\neq 0$ and $\upalpha$ does not end with $s_i$). By \eqref{inverse Si} $t_\upalpha^{-1}(S_i)\cong N$ for some finite length module $N$.
Hence by \ref{trivial}\eqref{trivial 2}
\[
[S_i,t_\upalpha b\kern 1pt]_{\geq d+1}=[N,b\kern 1pt]_{\geq d+1}=0.
\]

\medskip
\noindent
(b)  $S_i\in\cX_\upalpha$ (equivalently, $i\neq 0$ and $\upalpha$ ends with $s_i$). By \eqref{inverse Si} $t_\upalpha^{-1}(S_i)\cong N[1]$ for some finite length module $N$.  Hence again by \ref{trivial}\eqref{trivial 2}
\[
[S_i,t_\upalpha b\kern 1pt]_{\geq d+2}=[N[1],b\kern 1pt]_{\geq d+2}=[N,b\kern 1pt]_{\geq d+1}=0,
\]
and 
\[
[S_i,t_\upalpha b\kern 1pt]_{d+1}=[N,b\kern 1pt]_{d}\neq 0.
\]

Combining (a) and (b) proves \eqref{BT main 1}\eqref{BT main 2} in the case $k=1$.

\medskip
\noindent
\textbf{Induction Step.} We assume that the result is true for all paths with less than or equal to $k-1$ Deligne factors.  Write $\upalpha=\upalpha_k\circ\upbeta$ where $\upbeta\colonequals \upalpha_{k-1}\circ\hdots\circ\upalpha_1$.  By induction
\[
[\cS,t_{\upbeta} b\kern 1pt]_{\geq k+d}=0
\]
and $[S_j,t_{\upbeta} b\kern 1pt]_{k+d-1}\neq 0$ if and only if $j\neq 0$ and $\upalpha_{k-1}$ ends with $s_j$. Again there are only two cases:

\medskip
\noindent
(a) $S_i\in\cY_{\upalpha_k}$ (equivalently, $i=0$, or $i\neq 0$ and $\upalpha_k$ does not end with $s_i$). By \eqref{inverse Si} $t_{\upalpha_k}^{-1}(S_i)\cong N$ for some finite length module $N$. Hence
\[
[S_i,t_\upalpha b\kern 1pt]_{\geq k+d}
=
[t_{\upalpha_k}^{-1}S_i,t_\upbeta b\kern 1pt]_{\geq k+d}
=
[N,t_\upbeta b\kern 1pt]_{\geq k+d}
\stackrel{\scriptsize\mbox{\ref{trivial}\eqref{trivial 1}}}{=}
0.
\]

\medskip
\noindent
(b) $S_i\in\cX_{\upalpha_k}$ (equivalently, $i\neq 0$ and $\upalpha_k$ ends with $s_i$).  By \eqref{inverse Si} $t_{\upalpha_k}^{-1}(S_i)\cong N[1]$ for some finite length module $N$.  Thus
\[
[S_i,t_\upalpha b\kern 1pt]_{\geq k+d+1}\
=
[N[1],t_{\upbeta} b\kern 1pt]_{\geq k+d+1}
=
[N,t_\upbeta b\kern 1pt]_{\geq k+d}
\stackrel{\scriptsize\mbox{\ref{trivial}\eqref{trivial 1}}}{=}
0.
\]
Similarly
\[
[S_i,t_\upalpha b\kern 1pt]_{k+d}\
=
[N[1],t_{\upbeta} b\kern 1pt]_{k+d}
=
[N,t_\upbeta b\kern 1pt]_{k+d-1}
\]
so it remains to show that $[N,t_\upbeta b\kern 1pt]_{k+d-1}\neq 0$.  But by \ref{final cor}, there exists $j\neq 0$ such that $\upalpha_k$ starts with $s_j$, and $S_j\hookrightarrow N$.  Write $C$ for the cokernel, which necessarily has finite length, and consider the long exact sequence
\[
\hdots\to [C,t_\upbeta b\kern 1pt]_{k+d-1}\to [N,t_\upbeta b\kern 1pt]_{k+d-1}\to [S_j,t_\upbeta b\kern 1pt]_{k+d-1}\to [C,t_\upbeta b\kern 1pt]_{k+d}=0.
\]
Since $\upalpha_k$ starts with $s_j$,  necessarily $\upalpha_{k-1}$ ends with $s_j$, else $s_j\circ\alpha_{k-1}$ is an atom by \ref{standard Cox}, which would contradict the fact that  $\upalpha_k\circ\upalpha_{k-1}\circ\hdots\circ\upalpha_1$ is in Deligne normal form.  Thus $[S_j,t_\upbeta b\kern 1pt]_{k+d-1}\neq 0$ by the inductive hypothesis.  It follows that  $[N,t_\upbeta b\kern 1pt]_{k+d-1}\neq 0$.

Combining (a) and (b) proves \eqref{BT main 1}\eqref{BT main 2} in the case of $k$ factors, so by induction the result follows.
\end{proof}

The remainder of the proof of faithfulness is straightforward.
\begin{defin}
Define the groupoid $\cG_\Lambda$ as follows:
\begin{enumerate}
\item The vertices are $\Db(\mod \Lambda_C)$, for chambers $C$ of $\cH$.
\item The morphisms between any two vertices are all triangle equivalences between the corresponding derived categories.
\end{enumerate} 
\end{defin}
By \ref{simplicial and functors ok}\eqref{mut funct iso 2} and \ref{any atom will do}, there is a natural functor
\[
F_\Lambda\colon \mathds{G}_{\cH}\to\cG_\Lambda
\]
which sends a simple wall crossing $s_i$ to the corresponding equivalence $t_i$.

\begin{thm}\label{main groupoid faith}
The functor $F_\Lambda$ is faithful.
\end{thm}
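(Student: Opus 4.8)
The plan is to reduce to the positive part of the groupoid and then run an induction on the number of Deligne factors, with Proposition~\ref{BT main} supplying the combinatorial invariant that drives the induction. By \ref{faith to pos} it suffices to show that $F_\Lambda\circ\iota\colon\mathds{G}^+_\cH\to\cG_\Lambda$ is faithful. Since by \ref{simplicial and functors ok}\eqref{mut funct iso 2} and \ref{any atom will do} the functor $t_\upalpha$ attached to a positive path $\upalpha$ is independent of the chosen decomposition, $F_\Lambda\circ\iota$ is the functor sending a chamber $C$ to $\Db(\mod\Lambda_C)$ and a positive path $\upalpha$ to $t_\upalpha$; faithfulness then amounts to the statement that if $\upalpha,\upbeta\in\Hom_{\mathds{G}^+_\cH}(C,D)$ satisfy $t_\upalpha\cong t_\upbeta$, then $\upalpha\sim\upbeta$.

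\textbf{The invariant.} I would read off from \ref{BT main} the following data attached to $\upalpha$ with Deligne normal form $\upalpha=\upalpha_k\circ\cdots\circ\upalpha_1$ (the convention $k=0$ meaning $\upalpha=1$): by \ref{BT main}\eqref{BT main 3} and \ref{b ok for modifying}, $k=\bigl(\max\{p:[\cS,t_\upalpha b]_p\neq0\}\bigr)-d$; and, when $k\geq1$, by \ref{BT main}\eqref{BT main 2} the set $\{i\neq0:[S_i,t_\upalpha b]_{k+d}\neq0\}$ is exactly the set $J$ of walls of $D=t(\upalpha)$ through which the last factor $\upalpha_k$ can end, i.e.\ $J=\{i:s_i\in\Ends(\upalpha_k)\}$. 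Consequently, if $t_\upalpha\cong t_\upbeta$ then $\upalpha$ and $\upbeta$ have the same number $k$ of Deligne factors and the same set $J$.

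\textbf{Induction on $k$.} When $k\leq1$, $\upalpha$ and $\upbeta$ are either both the length-zero path at $C$ (the case $\upbeta\neq1$ being excluded by the invariant, since then $k+d\geq d+1$), or both atoms from $C$ to $D$; in the latter case $\upalpha\sim\upbeta$ directly from the relation defining $\sim$ on atoms with equal source and target. For the inductive step (so $k\geq2$), I would first argue that the recovered data $(k,J)$, together with the fact that $\upalpha_k$ is the maximal atom right-dividing $\upalpha$ and has prescribed target $D$, forces $\upalpha_k\sim\upbeta_k$. Granting this, $t_{\upalpha_k}\cong t_{\upbeta_k}$; writing $\upalpha=\upalpha_k\circ\upalpha'$ and $\upbeta=\upbeta_k\circ\upbeta'$ one checks (using $\Begin(\upalpha')=\Begin(\upalpha_1)$) that $\upalpha'=\upalpha_{k-1}\circ\cdots\circ\upalpha_1$ is the Deligne normal form of $\upalpha'$, and likewise for $\upbeta'$, so that $\upalpha',\upbeta'$ have $k-1$ factors, the same source $C$, and the common target $s(\upalpha_k)=s(\upbeta_k)$. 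Cancelling $t_{\upalpha_k}$ from $t_\upalpha\cong t_{\upalpha_k}\circ t_{\upalpha'}$ gives $t_{\upalpha'}\cong t_{\upbeta'}$, and the inductive hypothesis yields $\upalpha'\sim\upbeta'$, whence $\upalpha\sim\upbeta$.

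\textbf{Expected main obstacle.} The homological engine \ref{BT main} is already available, so the real work here is combinatorial, and I expect essentially all of it to sit in the step $\upalpha_k\sim\upbeta_k$. Reading off $(k,J)$ does not by itself pin down the last Deligne factor --- distinct atoms into $D$ can share the same set $J$ of end-walls --- so closing the induction will require either extracting finer information about $\upalpha_k$ from $t_\upalpha$ than the bare pair $(k,J)$, or a careful argument showing that \emph{within a fixed hom-set} $(k,J)$ does determine $\upalpha_k$ up to $\sim$, using the structure of Deligne normal form for simplicial arrangements (\S\ref{DNF section} together with \ref{2.14}, \ref{standard Cox} and \ref{deco}). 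This is where I anticipate simpliciality of $\cH$ to be used decisively, and where the bulk of the proof of \ref{main groupoid faith} will lie.
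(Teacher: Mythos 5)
Your reduction via \ref{faith to pos}, your extraction of the invariants from \ref{BT main} (the number $k$ of Deligne factors from part (3), and the set of end-walls of the last factor from part (2)), and your induction on $k$ all match the paper's proof. But the step you defer --- ``I would first argue that \dots forces $\upalpha_k\sim\upbeta_k$'' --- is exactly the content of the proof, and you correctly diagnose in your last paragraph that the pair $(k,J)$ read off from $t_\upalpha$ alone cannot pin down $\upalpha_k$. Since you neither supply the finer information nor the combinatorial argument you say would be needed, the proposal has a genuine gap at its central step.

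The idea you are missing is that one should not try to recover the whole atom $\upalpha_k$ from a single application of \ref{BT main} to $t_\upalpha$. Instead, the paper peels off the last Deligne factor one simple wall crossing at a time, re-applying \ref{BT main}\eqref{BT main 2} to a \emph{new} equality of functors at each stage: assuming $\ell(\upalpha_k)\le\ell(\upbeta_k)$, part (2) shows $\upalpha_k$ and $\upbeta_k$ end with a common $s_{i_1}$, so one writes $\upalpha_k=s_{i_1}\widetilde{\upalpha}_k$, $\upbeta_k=s_{i_1}\widetilde{\upbeta}_k$ and cancels $t_{i_1}^{-1}$ to get $t_{\widetilde{\upalpha}_k\upalpha_{k-1}\cdots\upalpha_1}=t_{\widetilde{\upbeta}_k\upbeta_{k-1}\cdots\upbeta_1}$; these shorter paths are again in a form to which \ref{BT main}\eqref{BT main 2} applies, yielding a common $s_{i_2}$, and so on. After $\ell=\ell(\upalpha_k)$ steps $\upalpha_k=s_{i_1}\cdots s_{i_\ell}$ is exhausted and $\upbeta_k=s_{i_1}\cdots s_{i_\ell}\upgamma$, with $t_{\upalpha_{k-1}\cdots\upalpha_1}=t_{\upgamma\upbeta_{k-1}\cdots\upbeta_1}$; a final application of \ref{BT main}\eqref{BT main 3} forces $\upgamma$ to have length zero, giving $\upalpha_k=\upbeta_k$ and the hypothesis needed for the inductive step. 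So the extra leverage comes from iterating the invariant along successively truncated functors, not from a finer combinatorial analysis of which atoms into $D$ share a set of end-walls; no such analysis is needed.
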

\begin{proof}
This is an easy induction. We use \ref{faith to pos}, so suppose that 
\[
t_\upalpha=t_\upbeta\colon \Db(\mod\Lambda_C)\to\Db(\mod\Lambda_D)
\] 
for some $\upalpha,\upbeta\in \mathds{G}_{\cH}^+$.  Since $t_{\upalpha}=t_\upbeta$, we deduce from \ref{BT main}\eqref{BT main 3} that $\upalpha$ and $\upbeta$ have the same number of Deligne factors, so write
\[
\upalpha=\upalpha_k\hdots\upalpha_1\quad\mbox{and}\quad\upbeta=\upbeta_k\hdots\upbeta_1
\]  
in Deligne normal form.  By induction, it is enough to show that $\upalpha_k=\upbeta_k$ and $t_{\upalpha_{k-1}\hdots\upalpha_1}=t_{\upbeta_{k-1}\hdots\upbeta_1}$. We may assume that $\ell\colonequals \ell(\upalpha_k)\leq\ell(\upbeta_k)$. By \ref{BT main}\eqref{BT main 2}, since $t_\upalpha=t_\upbeta$, both $\upalpha_k$ and $\upbeta_k$ end with the same simple wall crossing, say $s_{i_1}$, so we can write $\upalpha_k=s_{i_1}\widetilde{\upalpha}_k$ and $\upbeta_k=s_{i_1}\widetilde{\upbeta}_k$.  
 Hence applying $t_{{i_1}}^{-1}$ to $t_\upalpha=t_\upbeta$ we deduce that $
t^{\phantom 1}_{\widetilde{\upalpha}_k\upalpha_{k-1}\hdots\upalpha_1}=t_{\widetilde{\upbeta}_k\upbeta_{k-1}\hdots\upbeta_1}$.

Repeating the above argument, we can write $\upalpha_k=s_{i_1}\hdots s_{i_\ell}$ and $\upbeta_k=s_{i_1}\hdots s_{i_\ell}\upgamma$ 
for some $\upgamma\in\mathds{G}_{\cH}^{+}$, and so we have  $
t_{\upalpha_{k-1}\hdots\upalpha_1}=t_{\upgamma\upbeta_{k-1}\hdots\upbeta_1}$.  But again by  \ref{BT main}\eqref{BT main 3}, $\upgamma$ must be a length zero path. Hence we have $\upalpha_k=s_{i_1}\hdots s_{i_\ell}=\upbeta_k$ and $t_{\upalpha_{k-1}\hdots\upalpha_1}=t_{\upbeta_{k-1}\hdots\upbeta_1}$, as required.
\end{proof}

\begin{cor}\label{main group faith}
For every chamber $C$, the induced map
\[
\fundgp(\mathbb{C}^n\backslash \cH_\mathbb{C})\to\Aut\Db(\mod\Lambda_C)
\]
 is an injective group homomorphism
\end{cor}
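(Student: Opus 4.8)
The plan is to obtain this as a purely formal consequence of the faithfulness of $F_\Lambda$ proved in \ref{main groupoid faith}. First I would recall that, by \ref{ver gp}, for any chamber $C$ the vertex group $\Hom_{\mathds{G}_\cH}(C,C)$ is isomorphic to $\fundgp(\mathbb{C}^n\backslash\cH_\mathbb{C})$, and that by the definition of $\cG_\Lambda$ the group $\Autgp(F_\Lambda C)=\Hom_{\cG_\Lambda}(F_\Lambda C,F_\Lambda C)$ is exactly the group $\Aut\Db(\mod\Lambda_C)$ of triangle autoequivalences. The induced map in the statement is then nothing more than the restriction of the functor $F_\Lambda$ to this vertex group; since $F_\Lambda$ sends the simple wall crossing $s_i$ to the mutation functor $t_i$, this restriction is precisely the homomorphism $\upvarphi$ used throughout the paper.

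Next, I would simply invoke \ref{group is faithful} with $F=F_\Lambda$ and $\mathcal G=\cG_\Lambda$: since $F_\Lambda$ is faithful by \ref{main groupoid faith}, a faithful functor between groupoids is in particular injective on each Hom-set, hence restricts to an injective group homomorphism on every vertex group, so $\fundgp(\mathbb{C}^n\backslash\cH_\mathbb{C})\cong\Hom_{\mathds{G}_\cH}(C,C)\to\Autgp(F_\Lambda C)=\Aut\Db(\mod\Lambda_C)$ is injective, as required. That is the entire argument at this level.

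Consequently there is no real obstacle internal to this corollary; all the work sits upstream. The genuine difficulty is concentrated in \ref{main groupoid faith}, which via \ref{faith to pos} reduces faithfulness of $F_\Lambda$ to faithfulness on the positive groupoid, and that in turn rests on the inductive estimate of \ref{BT main}. The step I would flag as the heart of the whole argument is that the torsion pairs $(\cT_\upalpha,\cF_\upalpha)$ and $(\cX_\upalpha,\cY_\upalpha)$ attached to an atom $\upalpha$ detect how $\upalpha$ begins and ends, so that the cohomological amplitude of $t_\upalpha b$ simultaneously records the number of Deligne factors of $\upalpha$ and its final factor; this is exactly the bookkeeping needed to run the induction on Deligne normal form and recover $\upalpha$ from $t_\upalpha$. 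Granting \ref{main groupoid faith}, the present corollary is then immediate.
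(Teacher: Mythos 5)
Your argument is correct and is exactly the paper's proof: the corollary follows by applying \ref{group is faithful} to the faithful functor $F_\Lambda$ of \ref{main groupoid faith}, with the vertex group identified via \ref{ver gp}. Your additional remarks correctly locate where the real work lies, but no further content is needed for this corollary.
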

\begin{proof}
By \ref{group is faithful}, this follows immediately from \ref{main groupoid faith}.
\end{proof}

\subsection{Geometric Corollaries}\label{geo cor sect}
Although the above results were stated in the formal fibre setting, they easily imply the following global results.   

\begin{cor}\label{main flop result}
Suppose that $f\colon X\to X_{\con}$ is a flopping contraction between $3$-folds, where $X$ is smooth, and all curves in the contraction $f$ are individually floppable. Then there is an injective group homomorphism 
\[
\upvarphi\colon\fundgp(\mathbb{C}^n\backslash \cH_\mathbb{C})\to \Aut\Db(\coh X).
\]
\end{cor}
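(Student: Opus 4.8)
The plan is to deduce the global statement from the complete local faithfulness result \ref{formal fibre intro} (equivalently \ref{main group faith}) by restricting autoequivalences to the formal fibres over the singular points of $X_{\con}$, exploiting the fact that the flop functor is a local operation. First I would fix the local data. Let $p_1,\dots,p_m\in X_{\con}$ be the finitely many points over which $f$ fails to be an isomorphism, let $\mathfrak R_j\colonequals\widehat{\mathcal O}_{X_{\con},p_j}$, and let $f_j\colon U_j\colonequals X\times_{X_{\con}}\Spec\mathfrak R_j\to\Spec\mathfrak R_j$ be the induced complete local flopping contraction, with $U_j$ smooth, carrying $n_j$ exceptional curves and $\sum_j n_j=n$. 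Since the arrangement in \cite{Pinkham,HomMMP} is built purely from the formal neighbourhoods of the $p_j$, the real arrangement $\cH\subseteq\mathbb R^n$ is the product of the local arrangements $\cH_{\Lambda_j}\subseteq\mathbb R^{n_j}$ attached to $f_j$ in \S\ref{Compositions section}. Hence $\mathbb C^n\setminus\cH_\mathbb C=\prod_{j=1}^m(\mathbb C^{n_j}\setminus(\cH_{\Lambda_j})_\mathbb C)$, so that
\[
\fundgp(\mathbb C^n\setminus\cH_\mathbb C)\cong\prod_{j=1}^m\fundgp(\mathbb C^{n_j}\setminus(\cH_{\Lambda_j})_\mathbb C),
\]
and the restriction of $\upvarphi$ to the $j$-th factor is the composition of flop functors of the curves lying over $p_j$.

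Next I would record the key compatibility: for each $j$, the restriction functor $(-)|_{U_j}\colon\Db(\coh X)\to\Db(\coh U_j)$ intertwines the global flop functor of a curve $C_i$ lying over $p_j$ with the corresponding flop functor on $U_j$. The global flop functor is a Fourier--Mukai functor whose kernel is supported on $X\times_{X_{\con}}X^+$ (with $X^+\to X_{\con}$ the flop), and restricting this kernel along the flat map $\Spec\mathfrak R_j\to X_{\con}$ yields precisely the kernel of the flop functor for $U_j$, which by \cite[4.2]{HomMMP} is $\RHom$ against a tilting bimodule; since $\RHom$ against a tilting bimodule commutes with flat base change and completion, the intertwining follows, and it is uniform along arbitrary words. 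Consequently, if $g=(g_1,\dots,g_m)\in\ker\upvarphi$, so that $\upvarphi(g)\cong\Id_{\Db(\coh X)}$, then restricting to $U_j$ gives $\upvarphi_j(g_j)\cong\Id_{\Db(\coh U_j)}$, where $\upvarphi_j$ is the complete local homomorphism for $f_j$. By \ref{formal fibre intro} (equivalently \ref{main group faith} transported across the derived equivalence $\Db(\coh U_j)\cong\Db(\mod\Lambda_j)$ of \cite{VdB1d}), $\upvarphi_j$ is injective, hence $g_j=1$; as $j$ was arbitrary, $g=1$, proving injectivity of $\upvarphi$.

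I expect the main obstacle to be the intertwining statement in the previous paragraph: making precise, and uniform across arbitrary finite compositions, that ``flopping commutes with passage to the formal fibre'', and in particular ensuring that the restriction of the identity autoequivalence is the identity in a way that lets one cancel it factor by factor. All the needed ingredients are available --- the flop functor is a Fourier--Mukai functor supported on the fibre product over $X_{\con}$, which on each formal fibre is $\RHom$ against a tilting bimodule by \cite{VdB1d,HomMMP}, and such functors behave well under flat base change --- but stitching them into a clean comparison of the global and complete local groupoid representations requires some care. Everything else is formal: the product decomposition of $\cH$ and of $\fundgp(\mathbb C^n\setminus\cH_\mathbb C)$ is immediate from the construction, and the conclusion is then a direct appeal to \ref{main group faith}.
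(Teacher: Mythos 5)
Your proposal is correct and is essentially the paper's argument: reduce to the formal fibre using the local nature of the flop functors (the paper outsources this comparison to \cite[6.2]{DW3}, which shows the functors in the image of $\upvarphi$ fix skyscrapers away from the flopping curves, so relations are detected on the formal fibre) and then invoke \ref{main group faith}. The extra detail you supply --- the product decomposition of $\cH$ over the singular points and the kernel-restriction intertwining --- is a reasonable unpacking of that citation rather than a different route.
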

\begin{proof}
As in \cite[6.2]{DW3}, the functors in the image of $\upvarphi$ fix the skyscraper sheaves away from the flopping curves. Hence the relations can be detected on the formal fibre,  where the result is \ref{main group faith}.
\end{proof}

In the case when the $n$ curves are not individually floppable, there is still a group action, but only by a subgroup $S$ of $\fundgp(\mathbb{C}^n\backslash \cH_\mathbb{C})$ defined to be the subgroup generated by the $J$-twists of \cite{DW3}, where $J$ runs through all subsets of $\{1,\hdots,n\}$.  The proof of faithfulness extends to this case too.

\begin{cor}\label{main flop result arb curves}
Suppose that $f\colon X\to X_{\con}$ is a flopping contraction between $3$-folds, where $X$ is smooth. Then there is an injective group homomorphism 
\[
S\to \Aut\Db(\coh X).
\]
\end{cor}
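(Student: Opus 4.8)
The plan is to mimic the proof of \ref{main flop result}, reducing to the complete local setting and then invoking the groupoid faithfulness established in Section~\ref{faithful proof section}. Recall from \cite{DW3} that $S$ is by definition the subgroup of $\fundgp(\mathbb{C}^n\backslash\cH_\mathbb{C})$ generated by the images of the $J$-twists, $J\subseteq\{1,\dots,n\}$, and that the asserted action of $S$ on $\Db(\coh X)$ is simply the restriction $\upvarphi|_S$ of the homomorphism constructed there. As in \cite[6.2]{DW3}, every functor in the image of $\upvarphi$ restricts to the identity on skyscraper sheaves supported away from the flopping locus, so any relation among the $J$-twists that holds in $\Aut\Db(\coh X)$ can be tested after completing $X_{\con}$ at the finitely many singular points lying below the exceptional curves. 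Hence it suffices to prove that the composite $S\hookrightarrow\fundgp(\mathbb{C}^n\backslash\cH_\mathbb{C})\xrightarrow{\ \upvarphi\ }\Aut\Db(\coh U)$ is injective, where $f\colon U\to\Spec\mathfrak{R}$ is the induced complete local flopping contraction with $U$ smooth.

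The essential point is that nothing in Sections~\ref{Tilting Section}--\ref{faithful proof section} uses individual floppability of the curves: the setup \ref{flops setup} only requires $U$ to be smooth, and under it the algebra $\Lambda=\End_{\mathfrak{R}}(M)$, its simplicial arrangement $\cH_\Lambda$ (which coincides with $\cH$ by \cite{HomMMP}), the mutation functors $t_i$ between the categories $\Db(\mod\Lambda_C)$, and the faithful functor $F_\Lambda\colon\mathds{G}_\cH\to\cG_\Lambda$ of \ref{main groupoid faith} are all available verbatim. Consequently \ref{main group faith} gives, for every chamber $C$, an injective group homomorphism $\fundgp(\mathbb{C}^n\backslash\cH_\mathbb{C})\to\Aut\Db(\mod\Lambda_C)$.

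To conclude I would transport the geometric action to the algebra via the tilting equivalence $\Db(\coh U)\cong\Db(\mod\Lambda)$ of \cite{VdB1d}. Under this equivalence each $J$-twist on $\Db(\coh U)$ is identified, by the construction in \cite{DW3} (compare \cite[4.2]{HomMMP}), with the composite of mutation functors along a loop in $\mathds{G}_\cH$ representing the corresponding element of $\fundgp(\mathbb{C}^n\backslash\cH_\mathbb{C})$; thus $\upvarphi|_S$ becomes precisely the restriction to $S$ of the homomorphism of \ref{main group faith}, which is injective, and a restriction of an injective homomorphism is injective. This proves the corollary. The only delicate point is this last identification --- matching the geometrically defined $J$-twists with the vertex-group elements of $\mathds{G}_\cH$ compatibly with the derived equivalence --- but this is exactly what is established in \cite{DW3}, so no further argument is required here.
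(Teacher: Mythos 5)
Your proposal is correct and follows essentially the same route as the paper: reduce to the formal fibre via \cite[6.2]{DW3}, observe that the complete local machinery of \ref{flops setup}--\ref{main group faith} needs no individual floppability so the full $\fundgp(\mathbb{C}^n\backslash\cH_\mathbb{C})$ acts faithfully there, and conclude that the subgroup $S$ inherits faithfulness. The extra care you take in matching the $J$-twists with vertex-group elements under the tilting equivalence is already packaged into \cite{DW3} and the paper treats it as implicit.
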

\begin{proof}
Again, by \cite[6.2]{DW3}, the functors in the image of the above homomorphism fix the skyscraper sheaves away from the flopping curves.  Hence the relations can be detected on the formal fibre. Since there $\fundgp(\mathbb{C}^n\backslash \cH_\mathbb{C})$ acts faithfully by \ref{main group faith}, so does any subgroup.
\end{proof}

Recall that if $\cA$ is the heart of a bounded $t$-structure on a triangulated category $\cD$, and $\cA$ admits a torsion pair $(\cT,\cF)$, then the tilt of $\cA$ with respect to this torsion pair is defined to be
\[
\cA^{\sharp}\colonequals \{ E\in\cD\mid H^i(E)=0\mbox{ for }i\notin\{-1,0\}, H^{-1}(E)\in\cF\mbox{ and }H^0(E)\in\cT\}.
\]
By \cite[2.1]{HRS}, $\cA^{\sharp}$ is also the heart of a bounded $t$-structure on $\cD$.

Now for a $3$-fold flopping contraction $f\colon X\to X_{\rm con}$,  consider the full subcategories
\begin{align*}
\cT_{0}&\colonequals \{ T\in \coh X \mid {\bf R}^1f_*(T)=0\}\\
\cF_{0}&\colonequals \{ F\in\coh X \mid f_*(F)=0, {\rm Hom}(\cC,F)=0\},
\end{align*}
where $\cC\subset \coh X$ is the full subcategory consisting of objects $E$ such that ${\bf R} f_*(E)=0$.
Then $(\cT_{0},\cF_{0})$ is a torsion pair by \cite[Lemma 3.1.2]{VdB1d}, and the category of perverse sheaves relative to $f$ is defined to be
\[
\Per(X,X_{\rm con})\colonequals (\coh X)^{\sharp},
\] 
namely the tilt of the standard heart $\coh X\subset \Db(\coh X)$ with respect to the torsion pair $(\cT_{0},\cF_{0})$.

The following is a further consequence of the results in this paper, and may be of independent interest.  The first part is implicit in \cite{DW3}, the second part is new.

\begin{thm}\label{single tilt perverse}
Consider two crepant resolutions 
\[
\begin{tikzpicture}
\node (X) at (-1,1) {$X$};
\node (Y) at (1,1) {$Y$};
\node (R) at (0,0) {$\Spec R$};
\draw[->] (X)--(R);
\draw[->] (Y)--(R);
\end{tikzpicture}
\]
of $\Spec R$, where $R$ is an isolated cDV singularity.
\begin{enumerate}
\item Given two minimal chains of flops connecting $X$ and $Y$, the composition of flop functors associated to each chain are functorially isomorphic.
\item Perverse sheaves on $Y$, namely $\Per(Y,R)$, can be obtained from perverse sheaves on $X$, namely $\Per(X,R)$, by a single tilt at a torsion pair.
\end{enumerate}\end{thm}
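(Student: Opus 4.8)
The plan is to reduce everything to the formal fibre and then argue entirely with tilting modules, using the machinery of \S\ref{Compositions section}. Completing $R$ at its singular point is harmless: crepant resolutions, the flop functors and the perverse $t$-structures all agree with the original data over the punctured spectrum, so the skyscraper-localisation argument behind \ref{main flop result} lets one work under the setup \ref{flops setup}. I would take $X$ in the role of the base: with $M=f_\ast\mathcal V$ its CT module, $\Lambda=\End_R(M)$, and $\cH_\Lambda$ the associated simplicial arrangement, $X$ corresponds to the base chamber $D_+$ and, by the three-dimensional Auslander--McKay correspondence \cite[6.9]{HomMMP}, $Y$ corresponds to the chamber $D_S$ where $S\colonequals\Hom_R(M,M_Y)\in\tilt_0\Lambda$ and $M_Y$ is the CT module of $Y$; a minimal chain of flops from $X$ to $Y$ is precisely a decomposition of an atom with these two endpoints into simple wall crossings. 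For part~(1), two minimal chains yield two atoms with the same source and target, which are equivalent in $\mathds{G}_{\cH_\Lambda}$, so by \ref{simplicial and functors ok}\eqref{mut funct iso 2} and \ref{any atom will do} the compositions of mutation functors along them both coincide with the single functor $t_\upalpha$ attached to the endpoints; translating via the identification of the flop functor with the inverse mutation functor \cite[4.2]{HomMMP} gives the claim on the formal fibre, and the global statement follows since these functors fix the skyscrapers away from the flopping locus \cite[6.2]{DW3}, exactly as in \ref{main flop result}.

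For part~(2): by Van den Bergh \cite{VdB1d} the tilting bundles give equivalences $\Db(\coh X)\cong\Db(\mod\Lambda)$ and $\Db(\coh Y)\cong\Db(\mod\Lambda_Y)$ with $\Lambda_Y=\End_R(M_Y)\cong\End_\Lambda(S)$, carrying $\Per(X,R)$ and $\Per(Y,R)$ onto the respective standard module hearts. Under these the composed flop functor $\Db(\coh Y)\to\Db(\coh X)$ can be identified with $-\otimes^{\bf L}_{\Lambda_Y}S$, equivalently its inverse $\Db(\coh X)\to\Db(\coh Y)$ is $\RHom_\Lambda(S,-)=t_\upalpha$ for the atom $\upalpha\colon D_+\to D_S$ (using \ref{simplicial and functors ok}\eqref{mut funct iso 2}), and by \ref{simplicial and functors ok}\eqref{mut funct iso 1} the $\Lambda$-module $S$ is a single classical tilting module with $\pd_\Lambda S\le 1$. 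Thus part~(2) reduces to the purely homological claim that the image of the standard heart of $\Db(\mod\Lambda_Y)$ under $-\otimes^{\bf L}_{\Lambda_Y}S$ is a single tilt of $\mod\Lambda$ in the sense of \S\ref{geo cor sect}.

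This last point is the classical Brenner--Butler tilting story, available in the module-finite setting by \cite[2.9]{SY} and \cite[2.1]{HRS}. Put $\cT=\Fac S=\{N\mid\Ext^1_\Lambda(S,N)=0\}$ and $\cF=\{N\mid\Hom_\Lambda(S,N)=0\}$, a torsion pair on $\mod\Lambda$. Since $\pd_\Lambda S\le 1$, for $N\in\cT$ one has $\RHom_\Lambda(S,N)=\Hom_\Lambda(S,N)$ in degree $0$, and for $N\in\cF$ one has $\RHom_\Lambda(S,N)=\Ext^1_\Lambda(S,N)[-1]$ in degree $1$; running this along the triangle $H^{-1}(E)[1]\to E\to H^0(E)$ shows that $\RHom_\Lambda(S,E)$ lies in $\mod\Lambda_Y$ if and only if $H^0(E)\in\cT$, $H^{-1}(E)\in\cF$ and $H^i(E)=0$ otherwise, i.e.\ iff $E$ lies in the tilt $(\mod\Lambda)^{\sharp}$ at $(\cT,\cF)$. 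Hence $\Per(Y,R)$, transported into $\Db(\coh X)=\Db(\mod\Lambda)$ by the flop functor, is exactly the single tilt $\Per(X,R)^{\sharp}$ at $(\cT,\cF)$.

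The step I expect to be the main obstacle is not this computation --- which is routine once $S$ is known to be one tilting module --- but the orientation bookkeeping: one must ensure the transported $\Per(Y,R)$ is the \emph{right} tilt of $\Per(X,R)$ in the convention used here (torsion in degree $0$, torsionfree in degree $-1$), and not a shift of a tilt, nor the tilt with the roles of $X$ and $Y$ interchanged. This is pinned down by the precise normalisation of the flop functor against the mutation functor $t_i$ in \cite{HomMMP}, together with \ref{simplicial and functors ok}\eqref{mut funct iso 1} (the tilting modules strictly decrease along an atom out of $D_+$). A secondary, more cosmetic point is justifying the reduction of the perverse-heart statement to the complete-local case, which again rests on everything being an isomorphism over the punctured $\Spec R$.
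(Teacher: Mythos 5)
Your argument is correct, and for part~(2) it is essentially the paper's own proof: both identify $\Per(X,R)$ and $\Per(Y,R)$ with the module categories over $\End_R(M)$ and $\End_R(N)$ via \cite{VdB1d}, observe that $\Hom_R(M,N)$ is a single classical tilting module, and conclude by Brenner--Butler; you merely spell out the degree bookkeeping that the paper compresses into ``using the finitely-generated version of the equivalences \eqref{TF and XY}''. Your worry about orientation is resolved exactly as you suspect: $-\otimes^{\bf L}S$ sends the torsion class of $\mod\Lambda_Y$ into degree $0$ and the torsion-free class into degree $-1$, landing in $(\mod\Lambda)^{\sharp}$ with the paper's convention. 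For part~(1) you take a genuinely different route. The paper quotes Deligne \cite[1.10, 1.12]{Deligne} (minimal paths are identified in $\mathds{G}_\cH$ once the codimension-two relations hold) together with the braiding of two-curve flop functors from \cite{DW3}; you instead apply \ref{simplicial and functors ok}\eqref{mut funct iso 2} with $X$ as the base chamber, so that every minimal chain is a decomposition of an atom $D_X\to D_Y$ and each composition of mutation functors is functorially isomorphic to the single functor $\RHom_{\Lambda}(\Hom_R(M,M_Y),-)$, hence to each other. This is legitimate (the paper itself applies \ref{simplicial and functors ok} with arbitrary base chamber in \S\ref{tracking torsion pairs}, and there is no circularity, since its proof rests on the wall-crossing combinatorics and \cite{HomMMP} rather than on the braid relations of \cite{DW3}); what it buys is a self-contained argument internal to the paper, at the cost of invoking the heavier Theorem~\ref{simplicial and functors ok} where the paper gets away with a citation. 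One small economy you miss: the paper's proof of~(2) never needs part~(1) or the flop functor at all --- the identification of $\Db(\coh Y)$ inside $\Db(\coh X)$ is made directly through the tilting module $\Hom_R(M,N)$ --- so your appeal to the composed flop functor there, while harmless, is not necessary.
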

\begin{proof}
(1) By \cite[1.10, 1.12]{Deligne}, any two minimal paths can be identified provided that in the Deligne groupoid the codimension two relations hold.  By \cite[3.20]{DW3} the codimension two relations are precisely correspond to the braiding of the $2$-curve flop functors, which is proved in \cite[3.9, 3.20]{DW3}.\\
(2) Consider a minimal path of flops
\[
\Db(\coh X)
\xrightarrow{\flop_{i_1}}
\Db(\coh X_{i_1})
\xrightarrow{\flop_{i_2}}
\hdots
\xrightarrow{\flop_{i_n}}
\Db(\coh Y)
\]
connecting $X$ and $Y$.  By \cite{VdB1d} $X$ is derived equivalent to $\End_R(M)$ say, and $Y$ is derived equivalent to $\End_R(N)$ say, and under this identification $\Per(X,R)$ corresponds to $\mod\End_R(M)$, and $\Per(Y,R)$ corresponds to $\mod\End_R(N)$. Hence it suffices to show that $\mod\End_R(N)$ can be obtained from $\mod\End_R(M)$ by a tilt at a torsion pair.

Consider $T\colonequals \Hom_R(M,N)$.  This is a tilting $\End_R(M)$-module, by \cite[4.17]{IW4}.   But since $\End_R(M)$ is noetherian,
\begin{align*}
\cT&\colonequals \{ X\in\mod \End_R(M)\mid \Ext^1_{\End_R(M)}(T,X)=0\}\\
\cF&\colonequals \{ X\in\mod \End_R(M)\mid \Hom_{\End_R(M)}(T,X)=0\}
\end{align*} 
gives a torsion pair $(\cT,\cF$) on $\mod\End_R(M)$; the proof is identical to \cite[2.7(3)]{SY}. Using the finitely-generated version of the equivalences \eqref{TF and XY}, it is then clear that $\mod\End_R(N)$ is obtained from $\mod\End_R(M)$ by tilting at $(\cT,\cF)$.

\end{proof}

\appendix
\section{Brav--Thomas Revisited}\label{BT appendix}

In this appendix, which can be read independently of the previous sections, we give a direct proof of the faithfulness of the braid action on the minimal resolution of Kleinian singularities, just to demonstrate that our torsion pairs viewpoint simplifies the \cite{BT} proof.  Thus  in this section we consider the minimal resolution $X\to\Spec R$ of a Kleinian singularity, let $\Lambda$ denote the completion of the preprojective algebra of the corresponding extended Dynkin diagram, and set $b\colonequals \cS$, where $\cS$ is the direct sum of the vertex simples $S_0,S_1,\hdots,S_n$.

The initial step requires the following elementary lemma, which replaces \ref{trivial}.

\begin{lemma}\label{trivial appendix}
Suppose that $M\in\fl\Lambda$.
\begin{enumerate}
\item\label{trivial 1 B} If $y\in\Db(\mod\Lambda)$ is such that $[\cS,y\kern 1pt]_{\geq p}=0$, then $[M,y\kern 1pt]_{\geq p}=0$. 
\item\label{trivial 2 B} $[M,\cS]_{2}\neq 0$ and $[M,\cS]_{\geq 3}=0$.
\end{enumerate}
\end{lemma}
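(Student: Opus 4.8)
The plan is to follow the proof of \ref{trivial} almost verbatim, the only change being that here $b=\cS$ rather than $b=\Lambda$, so in place of local duality over $\mathfrak{R}$ I would invoke the $2$-Calabi--Yau property of $\Lambda$. Recall that $\Lambda$ is the completion of the preprojective algebra of an extended Dynkin diagram, equivalently $\Lambda\cong\End_R(M)$ for a Kleinian singularity $R$ (Krull dimension $2$, isolated) and a CM generator $M$; it is classical that such $\Lambda$ is bimodule $2$-Calabi--Yau, so that for all $X,Y\in\fl\Lambda$ there is a functorial isomorphism $\Ext^i_\Lambda(X,Y)\cong D\Ext^{2-i}_\Lambda(Y,X)$, where $D$ is the $k$-dual (which on $\fl\Lambda$ is honest Matlis duality).

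For \eqref{trivial 1 B} I would induct on the composition length of $M$. If $M$ is simple it is one of the $S_i$, hence a direct summand of $\cS$, so $[\cS,y\kern 1pt]_{\geq p}=0$ immediately forces $[M,y\kern 1pt]_{\geq p}=0$. In general, pick a simple submodule $S_i\hookrightarrow M$ and apply $\Hom_{\Db(\mod\Lambda)}(-,y)$ to the short exact sequence $0\to S_i\to M\to M/S_i\to 0$; the resulting long exact sequence sandwiches $[M,y\kern 1pt]_t$ between $[M/S_i,y\kern 1pt]_t$ and $[S_i,y\kern 1pt]_t$, both of which vanish for $t\geq p$ (the former by induction, the latter as above), so $[M,y\kern 1pt]_{\geq p}=0$.

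For \eqref{trivial 2 B}, apply the $2$-Calabi--Yau isomorphism with $X=M$ and $Y=\cS$. For $i\geq 3$ the right-hand side is $D\Ext^{2-i}_\Lambda(\cS,M)$ with negative index, hence zero, giving $[M,\cS]_{\geq 3}=0$ (alternatively this is immediate from $\gl\Lambda=2$). For $i=2$ it is $D\Hom_\Lambda(\cS,M)$, and since $M\neq 0$ is of finite length it has a nonzero semisimple socle, which contains some $S_j$, so $\Hom_\Lambda(\cS,M)\neq 0$ and therefore $[M,\cS]_2\neq 0$. I do not expect any genuine obstacle here: both parts are formal once the $2$-Calabi--Yau duality is in hand, and that is standard for completed preprojective algebras of affine type. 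The only points that need a little care are to ensure the duality is applied within $\fl\Lambda$ (so all $\Ext$-groups in sight are finite dimensional and $D$ behaves as $k$-duality), and---exactly as in \ref{trivial}---that \eqref{trivial 2 B} is understood for $M\neq 0$.
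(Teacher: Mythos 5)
Your proposal is correct and follows essentially the same route as the paper: part (1) by induction on a composition series via the long exact sequence, and part (2) from the $2$-Calabi--Yau property of $\Lambda$. The only cosmetic difference is that in (2) you apply the CY duality directly to $M$ (using its nonzero socle), whereas the paper first records $[\cS,\cS]_{\geq 3}=0$ and $[S_i,\cS]_2\neq 0$ at the level of simples and then propagates these to $M$ by the same filtration induction as in (1); both arguments rest on exactly the same input.
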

\begin{proof}
(1) is an easy induction on the length of the filtration of $M$, using the long exact sequence from $[-,y\kern 1pt]$.\\
(2) Since $\Lambda$ is $2$-CY, $[\cS,\cS]_{\geq 3}=0$, so the second statement is a consequence of \eqref{trivial 1 B}.  The first also follows  by an induction on the length of the filtration of $M$, using the fact that $[S_i,\cS]_2\neq 0$ for all $0\leq i\leq n$.
\end{proof}

For every primitive idempotent $e_i$ corresponding to a vertex of the extended Dynkin diagram, following \cite[\S6]{IR} we set
\[
I_i\colonequals \Lambda(1-e_i)\Lambda.
\] 
It is known by \cite[Section 6]{DW1} that $\uprhom_\Lambda(I_i,-)$ is functorially isomorphic to the twist functor $t_i$.  To control iterations, for any $\upalpha\in W$ where $W$ is the associated Weyl group, choose a reduced expression $\upalpha=s_{i_n}\circ\hdots\circ s_{i_1}$ and define
\[
I_\upalpha\colonequals I_{i_n}\hdots I_{i_1}.
\]
Since the expression is reduced, 
\[
I_\upalpha\cong I_{i_n}\otimes^{\bf L}_{\Lambda}\hdots\otimes^{\bf L}_{\Lambda} I_{i_1}
\]
by \cite[2.21]{SY}, so that
\begin{equation}
t_\upalpha\colonequals \uprhom_{\Lambda}(I_{\upalpha},-)\cong t_{i_n}\circ\hdots\circ t_{i_1}.\label{3B2}
\end{equation}
By the usual torsion pair associated to a tilting module, as in \cite[2.9]{SY} and \S\ref{tracking torsion pairs}, for any vertex simple $S_i$,  either $S_i\in\cX_\upalpha$ or $S_i\in\cY_\upalpha$, where
\begin{align*}
\cX_\upalpha&\colonequals \{ N\in\fl \Lambda\mid N\otimes_{\Lambda} I_\upalpha=0\}\\
\cY_\upalpha&\colonequals \{ N\in\fl \Lambda\mid \Tor_1^{\Lambda}(N,I_\upalpha)=0\},
\end{align*} 
and furthermore the equivalence \eqref{3B2} forces
\begin{eqnarray}
t_\upalpha^{-1}(S_i)=
\left\{
\begin{array}{cl}
\Tor_1(S_i,I_\upalpha)[1]&\mbox{if }S_i\in\cX_\upalpha\\
S_i\otimes I_\upalpha&\mbox{if }S_i\in\cY_\upalpha.
\end{array}
\right.\label{inverse Si2}
\end{eqnarray}

There is a corresponding version of the results \ref{Key SY lemma}, \ref{S0 ok} and \ref{final cor}, which we will use freely below, since these were already very well known \cite[2.28, 5.4]{SY} in the preprojective algebra setting.  With this, we can now prove the main technical lemma \cite[Prop.\ 3.1]{BT} in the setting of minimal resolutions of Kleinian singularities. 

\begin{prop}\label{BT main B}
Let $1\neq \upalpha\in \mathds{G}^+_{\cH}$ have Deligne normal form $\upalpha=\upalpha_k\circ\hdots\circ \upalpha_1$.  Then
\begin{enumerate}
\item\label{BT main 1 B} $[\cS,t_\upalpha\cS]_{\geq k+3}=0$.
\item\label{BT main 2 B} $[S_i,t_\upalpha\cS]_{k+2}\neq 0$ if and only if $i\neq 0$ and the atom $\upalpha_k$ ends (up to the relations in $\mathds{G}^+_{\cH}$) by passing through wall $i$.  In particular $[\cS,t_\upalpha\cS]_{k+2}\neq 0$.
\item\label{BT main 3 B} The maximal $p$ such that $[\cS,t_\upalpha\cS]_p\neq 0$ is precisely $p=k+2$.
\end{enumerate}
\end{prop}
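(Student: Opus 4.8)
The plan is to mirror the proof of \ref{BT main} almost verbatim, with the role of $d$ played by $2$ and the modifying module $b=\Lambda$ replaced by $b=\cS$. The only genuinely different input is that the decay statement $[\cS,\cS]_{\geq 3}=0$ and the bottom non-vanishing $[S_i,\cS]_2\neq 0$ now come from the $2$-Calabi--Yau property of $\Lambda$ rather than from local duality on a Gorenstein ring; but these are exactly what \ref{trivial appendix} records. As in \ref{BT main}, statement \eqref{BT main 3 B} is immediate from \eqref{BT main 1 B} and \eqref{BT main 2 B}, so it suffices to prove the latter two simultaneously by induction on the number $k$ of Deligne factors of $\upalpha$.

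For the base case $k=1$, $\upalpha$ is an atom, and since each $S_i$ is simple the torsion pair $(\cX_\upalpha,\cY_\upalpha)$ on $\fl\Lambda$ places $S_i$ in exactly one of $\cX_\upalpha$, $\cY_\upalpha$; the preprojective-algebra analogues of \ref{Key SY lemma} and \ref{S0 ok} (available from \cite[2.28, 5.4]{SY}) identify which: $S_i\in\cY_\upalpha$ exactly when $i=0$ or $i\neq 0$ and $\upalpha$ does not end with $s_i$, and $S_i\in\cX_\upalpha$ exactly when $i\neq 0$ and $\upalpha$ ends with $s_i$. In the first case \eqref{inverse Si2} gives $t_\upalpha^{-1}(S_i)\cong N$ for a finite length $N$, so $[S_i,t_\upalpha\cS]_{\geq 3}=[N,\cS]_{\geq 3}=0$ by \ref{trivial appendix}\eqref{trivial 2 B}; in the second $t_\upalpha^{-1}(S_i)\cong N[1]$, whence $[S_i,t_\upalpha\cS]_{\geq 4}=[N,\cS]_{\geq 3}=0$ and $[S_i,t_\upalpha\cS]_{3}=[N,\cS]_2\neq 0$, again by \ref{trivial appendix}\eqref{trivial 2 B}. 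Combining these is exactly \eqref{BT main 1 B} and \eqref{BT main 2 B} for $k=1$.

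For the induction step, write $\upalpha=\upalpha_k\circ\upbeta$ with $\upbeta=\upalpha_{k-1}\circ\hdots\circ\upalpha_1$, so by hypothesis $[\cS,t_\upbeta\cS]_{\geq k+2}=0$ and $[S_j,t_\upbeta\cS]_{k+1}\neq 0$ iff $j\neq 0$ and $\upalpha_{k-1}$ ends with $s_j$. Splitting again on whether $S_i\in\cY_{\upalpha_k}$ or $S_i\in\cX_{\upalpha_k}$, and using $t_\upalpha^{-1}=t_\upbeta^{-1}\circ t_{\upalpha_k}^{-1}$ together with \eqref{inverse Si2}, the vanishing statements follow from \ref{trivial appendix}\eqref{trivial 1 B} exactly as in \ref{BT main}. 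The only point requiring care is the bottom non-vanishing $[S_i,t_\upalpha\cS]_{k+2}\neq 0$ when $S_i\in\cX_{\upalpha_k}$: there $[S_i,t_\upalpha\cS]_{k+2}=[N,t_\upbeta\cS]_{k+1}$, where $N$ is the finite length module with $t_{\upalpha_k}^{-1}(S_i)\cong N[1]$, and by the preprojective analogue of \ref{final cor} the atom $\upalpha_k$ starts with some $s_j$, $j\neq 0$, with $S_j\hookrightarrow N$. Applying $[-,t_\upbeta\cS]$ to $0\to S_j\to N\to C\to 0$ with $C$ of finite length (so $[C,t_\upbeta\cS]_{k+2}=0$ by induction) reduces the claim to $[S_j,t_\upbeta\cS]_{k+1}\neq 0$, and this holds since $\upalpha_k$ starting with $s_j$ forces $\upalpha_{k-1}$ to end with $s_j$ by \ref{standard Cox} and the definition of Deligne normal form, so the inductive hypothesis for $\upbeta$ applies. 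Combining the two cases completes the induction.

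I expect no real obstacle beyond bookkeeping: the essential work has already been done in \S\ref{tracking torsion pairs} and in the proof of \ref{BT main}, and the appendix merely substitutes the $2$-Calabi--Yau inputs from \cite{SY}. The one thing worth double-checking is that the preprojective-setting analogues of \ref{Key SY lemma}, \ref{S0 ok} and \ref{final cor} are genuinely available in the form used here, in particular that the affine vertex simple $S_0$ plays the role of the distinguished simple; but this is standard and recorded in \cite[2.28, 5.4]{SY}.
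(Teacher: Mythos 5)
Your proposal is correct and follows essentially the same route as the paper's own proof: the same induction on the number of Deligne factors, the same dichotomy via the torsion pair $(\cX_{\upalpha_k},\cY_{\upalpha_k})$ with \eqref{inverse Si2}, and the same short exact sequence $0\to S_j\to N\to C\to 0$ combined with \ref{standard Cox} to force $\upalpha_{k-1}$ to end with $s_j$ in the non-vanishing step. The point you flag for double-checking — that the preprojective analogues of \ref{Key SY lemma}, \ref{S0 ok} and \ref{final cor} are available from \cite[2.28, 5.4]{SY} — is exactly how the paper handles it.
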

\begin{proof}
Statement \eqref{BT main 3 B} follows immediately from \eqref{BT main 1 B} and \eqref{BT main 2 B}, so we prove both \eqref{BT main 1 B} and \eqref{BT main 2 B} together using induction on the number of Deligne factors.

\medskip
\noindent
\textbf{Base Case: $k=1$}, i.e.\ $\upalpha$ is an atom.  Since $S_i$ is simple, there are only two cases, namely  $S_i\in\cY_\upalpha$ or $S_i\in\cX_\upalpha$, and using \ref{Key SY lemma} and \ref{S0 ok} we can characterise these:

\medskip
\noindent
(a) $S_i\in\cY_\upalpha$ (equivalently, $i=0$, or $i\neq 0$ and $\upalpha$ does not end with $s_i$). By \eqref{inverse Si2} $t_\upalpha^{-1}(S_i)\cong M$ for some finite length module $M$.
Hence by \ref{trivial appendix}\eqref{trivial 2}
\[
[S_i,t_\upalpha\cS]_{\geq 3}=[M,\cS]_{\geq 3}=0.
\]

\medskip
\noindent
(b)  $S_i\in\cX_\upalpha$ (equivalently, $i\neq 0$ and $\upalpha$ ends with $s_i$). By \eqref{inverse Si2} $t_\upalpha^{-1}(S_i)\cong M[1]$ for some finite length module $M$.  Hence again by \ref{trivial appendix}\eqref{trivial 2}
\[
[S_i,t_\upalpha\cS]_{\geq 4}=[M[1],\cS]_{\geq 4}=[M,\cS]_{\geq 3}=0,
\]
and 
\[
[S_i,t_\upalpha\cS]_{3}=[M,\cS]_{2}\neq 0.
\]

Combining (a) and (b) proves \eqref{BT main 1 B}\eqref{BT main 2 B} in the case $k=1$.

\medskip
\noindent
\textbf{Induction Step.} We assume that the result is true for all paths with less than or equal to $k-1$ Deligne factors.  Write $\upalpha=\upalpha_k\circ\upbeta$ where $\upbeta\colonequals \upalpha_{k-1}\circ\hdots\circ\upalpha_1$.  By induction
\[
[\cS,t_{\upbeta}\cS]_{\geq k+2}=0
\]
and $[S_j,t_{\upbeta}\cS]_{k+1}\neq 0$ if and only if $j\neq 0$ and $\upalpha_{k-1}$ ends with $s_j$. Again there are only two cases:

\medskip
\noindent
(a)  $S_i\in\cY_{\upalpha_k}$ (equivalently, $i=0$, or $i\neq 0$ and $\upalpha_k$ does not end with $s_i$). By \eqref{inverse Si2} $t_{\upalpha_k}^{-1}(S_i)\cong M$ for some finite length module $M$. Hence
\[
[S_i,t_\upalpha\cS]_{\geq k+2}
=
[t_{\upalpha_k}^{-1}S_i,t_\upbeta\cS]_{\geq k+2}
=
[M,t_\upbeta\cS]_{\geq k+2}
\stackrel{\scriptsize\mbox{\ref{trivial appendix}\eqref{trivial 1}}}{=}
0.
\]

\medskip
\noindent
(b) $S_i\in\cX_{\upalpha_k}$ (equivalently, $i\neq 0$ and $\upalpha_k$ ends with $s_i$).  By \eqref{inverse Si2} $t_{\upalpha_k}^{-1}(S_i)\cong M[1]$ for some finite length module $M$.  Thus
\[
[S_i,t_\upalpha\cS]_{\geq k+3}\
=
[M[1],t_{\upbeta}\cS]_{\geq k+3}
=
[M,t_\upbeta\cS]_{\geq k+2}
\stackrel{\scriptsize\mbox{\ref{trivial appendix}\eqref{trivial 1}}}{=}
0.
\]
Similarly
\[
[S_i,t_\upalpha\cS]_{k+2}\
=
[M[1],t_{\upbeta}\cS]_{k+2}
=
[M,t_\upbeta\cS]_{k+1}
\]
so it remains to show that $[M,t_\upbeta\cS]_{k+1}\neq 0$.  But by \ref{final cor}, there exists $j\neq 0$ such that $\upalpha_k$ starts at $s_j$, and $S_j\hookrightarrow M$.  Write $C$ for the cokernel, which necessarily has finite length, and consider the long exact sequence
\[
\hdots\to [C,t_\upbeta\cS]_{k+1}\to [M,t_\upbeta\cS]_{k+1}\to [S_j,t_\upbeta\cS]_{k+1}\to [C,t_\upbeta\cS]_{k+2}=0.
\]
Since $\upalpha_k$ starts with $s_j$,  necessarily $\upalpha_{k-1}$ ends with $s_j$, else $s_j\circ\alpha_{k-1}$ is an atom by \ref{standard Cox}, which would contradict the fact that  $\upalpha_k\circ\upalpha_{k-1}\circ\hdots\circ\upalpha_1$ is in Deligne normal form.  Thus $[S_j,t_\upbeta\cS]_{k+1}\neq 0$ by the inductive hypothesis.  It follows that  $[M,t_\upbeta\cS]_{k+1}\neq 0$.

Combining (a) and (b) proves \eqref{BT main 1 B}\eqref{BT main 2 B} in the case of $k$ factors, so by induction the result follows.
\end{proof}

From here, the proof of faithfulness follows exactly as in \cite[Thm.\ 3.1]{BT}.  Alternatively, we can use \ref{faith to pos} as in \ref{main groupoid faith} to deduce that the groupoid action is faithful. Since $B_\Gamma\cong \fundgp( (\mathbb{C}^n\backslash \cH_\mathbb{C})/W_\Gamma)$, and each vertex of $\cG$ is by definition the same $\Db(\coh X)$,  as is standard by identifying all vertices we can simply re-interpret the faithful groupoid action as an injective group homomorphism $B_\Gamma\to\Aut\Db(\coh X)$.

\section{Tilting Background}\label{tilting appendix}

In this appendix, which is logically independent of all other sections, we give some known tilting results that were used in the text, and we also prove \ref{easy one way appA text} and \ref{DIJ lemmaB}.

 Throughout $\Lambda$ is a basic $\mathfrak{R}$-algebra, where $\mathfrak{R}$ is a complete local domain. Recall that if $T\in\tilt_0\Lambda$ and its mutation $\upnu_iT$ at a direct summand $T_i$ exists, either there is an exact sequence 
\[
0\to T_i\xrightarrow{f}T'\to U_i\to 0
\]
where $f$ is a minimal left $\add(T/T_i)$-approximation, or an exact sequence
\[
0\to U_i\to T'\xrightarrow{g}T_i\to 0
\]
where $g$ is a minimal right $\add (T/T_i)$-approximation.
By definition, $T>\upnu_iT$ in the former case, and $T<\upnu_iT$ in the latter case.

Suppose that $T\in\tilt \Lambda$ with $\End_\Lambda(T)\cong\Gamma$.  By projectivization, the indecomposable summands of $\Gamma$ correspond to the indecomposable summands of $T$.  Hence we can try to mutate $T\in\tilt\Lambda$ to form $\upnu_iT$, and similarly we can try to mutate $\Gamma\in\tilt\Gamma$ to form $\upnu_i\Gamma$.   Although the following is elementary and is known to experts, references to the literature only exist when $\mod\Lambda$ is Hom-finite, so here we give the proof in full.  

\begin{prop}\label{flow big to small}
Suppose that $T\in\tilt\Lambda$, and set $\Gamma\colonequals \End_\Lambda(T)$.  If $\upnu_i T$ exists and further $T>\upnu_i T$, then $\upnu_i\Gamma\in\tilt\Gamma$ exists, there is an isomorphism  $\upnu_i T\cong \upnu_i\Gamma\otimes^{\bf L}_\Gamma T$ in $\Db(\mod\Lambda)$, and further $\upnu_iT\in\cT\colonequals \{ N\in\mod \Lambda\mid \Ext^1_{\Lambda}(T,N)=0\}$.
\end{prop}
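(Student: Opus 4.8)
The plan is to transport everything through the standard derived equivalence attached to $T$. Since $T$ is a classical tilting module, $\pd_\Lambda T\le 1$ and $\Ext^{j}_\Lambda(T,T)=0$ for all $j>0$, so $G\colonequals\RHom_\Lambda(T,-)\colon\Db(\mod\Lambda)\to\Db(\mod\Gamma)$ is a triangle equivalence with quasi-inverse $-\otimes^{\bf L}_\Gamma T$, and by projectivization $G$ restricts to an equivalence $\add T\xrightarrow{\sim}\proj\Gamma$ carrying the indecomposable summand $T_i$ to an indecomposable projective $Q_i\colonequals\Hom_\Lambda(T,T_i)$; write $\Gamma=Q_i\oplus Q'$ with $Q'\colonequals\Hom_\Lambda(T,T/T_i)$. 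Note for $M\in\add T$ one has $G(M)=\Hom_\Lambda(T,M)$ concentrated in degree zero.

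Because $T>\upnu_iT$, I would invoke the exchange sequence $0\to T_i\xrightarrow{f}T'\to U_i\to 0$ with $T'\in\add(T/T_i)$, $f$ a minimal left $\add(T/T_i)$-approximation, and $\upnu_iT=(T/T_i)\oplus U_i$. Applying $\Hom_\Lambda(T,-)$ and using $\Ext^{j}_\Lambda(T,-)=0$ on $\add T$ for $j>0$, the long exact sequence first shows $\Ext^{j}_\Lambda(T,U_i)=0$ for all $j>0$, so $G(U_i)$ is a module, which I will call $U_i^\star$, and second yields the short exact sequence of $\Gamma$-modules
\[
0\to Q_i\xrightarrow{\Hom_\Lambda(T,f)}\Hom_\Lambda(T,T')\to U_i^\star\to 0,
\]
with $\Hom_\Lambda(T,T')\in\add Q'$. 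Transporting the (minimal) approximation property along the equivalence $\add T\simeq\proj\Gamma$, the map $\Hom_\Lambda(T,f)$ is a minimal left $\add(\Gamma/Q_i)$-approximation, and it is injective; hence this is precisely an exchange sequence for $\Gamma$, so by the tilting mutation theory of Riedtmann--Schofield and Happel--Unger \cite{RS,HU} the mutation $\upnu_i\Gamma\in\tilt\Gamma$ exists, $\Gamma>\upnu_i\Gamma$, and $\upnu_i\Gamma=(\Gamma/Q_i)\oplus U_i^\star$.

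It then remains to match the two sides. Since $T/T_i\in\add T$ we have $G(T/T_i)=\Hom_\Lambda(T,T/T_i)=\Gamma/Q_i$, whence
\[
G(\upnu_iT)=G(T/T_i)\oplus G(U_i)=(\Gamma/Q_i)\oplus U_i^\star=\upnu_i\Gamma,
\]
and applying the quasi-inverse $-\otimes^{\bf L}_\Gamma T$ gives $\upnu_iT\cong\upnu_i\Gamma\otimes^{\bf L}_\Gamma T$ in $\Db(\mod\Lambda)$. Finally, $\Ext^1_\Lambda(T,T/T_i)=0$ since $T/T_i\in\add T$, while $\Ext^1_\Lambda(T,U_i)=0$ was shown above, so $\Ext^1_\Lambda(T,\upnu_iT)=0$, i.e.\ $\upnu_iT\in\cT$. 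The main obstacle I anticipate is the bookkeeping in the middle step: verifying that $\RHom_\Lambda(T,-)$ really concentrates $U_i$ in degree zero, and above all that minimality together with the left $\add(T/T_i)$-approximation property of $f$ transfers faithfully across the projectivization equivalence, so that the resulting short exact sequence of $\Gamma$-modules is genuinely an exchange sequence realising $\upnu_i\Gamma$ as a tilting module; everything else is then formal consequence of the derived equivalence.
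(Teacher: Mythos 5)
Your proposal is correct and follows essentially the same route as the paper: apply $\Hom_\Lambda(T,-)$ to the exchange sequence for $T>\upnu_iT$, check the resulting injection of projective $\Gamma$-modules is a minimal left $\add(\Gamma/Q_i)$-approximation via projectivization, deduce $\upnu_i\Gamma=(\Gamma/Q_i)\oplus\Hom_\Lambda(T,U_i)$, and transport back through the derived equivalence. The only cosmetic differences are that the paper computes $\Hom_\Lambda(T,U_i)\otimes^{\bf L}_\Gamma T$ explicitly from the two-term projective resolution rather than invoking the quasi-inverse abstractly, and it cites a version of the "injective approximation gives a tilting module" fact valid in the module-finite-over-complete-local setting (rather than the Hom-finite statements of Riedtmann--Schofield and Happel--Unger).
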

\begin{proof}
To ease notation write $V\colonequals T/T_i$. \\
(1) Since $\upnu_iT=V\oplus U_i$ exists and $T>\upnu_iT$, as above there exists an exact sequence
\[
0\to T_i\xrightarrow{f}T'\to U_i\to 0
\]
where $f$ is a minimal left $\add V$-approximation.  Applying $\Hom_\Lambda(T,-)$ gives an exact sequence
\begin{equation}
0\to \Hom_\Lambda(T,T_i)\xrightarrow{f\circ } \Hom_\Lambda(T,T')\to \Hom_\Lambda(T,U_i)\to 0\label{proj res Ui}
\end{equation}
Write $\Gamma=\Hom_{\Lambda}(T,T)=\Hom_\Lambda(T,V)\oplus \Hom_\Lambda(T,T_i)\colonequals \Gamma_V\oplus\Gamma_i$, then by projectivisation \eqref{proj res Ui} is a projective resolution of $\Hom_\Lambda(T,U_i)$.  We claim that $(f\circ)$ is a minimal left $\add\Gamma_V$-approximation.  To see this, simply apply $\Hom_\Gamma(-,\Gamma_V)$ to \eqref{proj res Ui} to obtain a commutative diagram
\[
\begin{tikzpicture}
\node (A1) at (0,0) {$\Hom_\Gamma(\Gamma_i,\Gamma_V)$};
\node (A2) at (4,0) {$\Hom_\Gamma(\Hom_{\Lambda}(T,T'),\Gamma_V)$};
\node (B1) at (0,-1.5) {$\Hom_\Lambda(T_i,V)$};
\node (B2) at (4,-1.5) {$\Hom_\Lambda(T',V)$};

\draw[->] (A2) -- (A1);
\draw[->>] (B2) -- node[above]{$\scriptstyle \circ f$} (B1);
\draw[<-] (B2) -- node[right] {$\scriptstyle \sim$} (A2);
\draw[<-] (B1) --  node[right] {$\scriptstyle \sim$} (A1);
\end{tikzpicture}
\]
where the vertical maps are isomorphisms by projectivisation, and the bottom map is surjective since $f$ is an $\add V$-approximation.  It follows that the top map is surjective, and hence $(f\circ)$ is a left $\add\Gamma_V$-approximation.  The minimality of $(f\circ)$ follows since the left $\add V$-approximation $f$ is minimal, and the functor $\Hom_{\Lambda}(T,-)\colon \add T\to\proj\Gamma$ is fully faithful.

As is standard \cite[6.6]{IW4}, since $(f\circ )$ in \eqref{proj res Ui} is injective and an approximation, it follows that $\Gamma_V\oplus\Hom_\Lambda(T,U_i)\in\tilt\Gamma$, and evidently $\upnu_i\Gamma\cong \Gamma_V\oplus\Hom_\Lambda(T,U_i)$ since $\upnu_i\Gamma$ and $\Gamma$ differ at only one indecomposable summand.

Now, using \eqref{proj res Ui} to compute the derived tensor in $\Db(\mod\Lambda)$, observe first that 
\begin{align*}
\Hom_\Lambda(T,U_i)\otimes^{\bf L}_\Gamma T&\cong
\hdots \to 0\to \Hom_\Lambda(T,T_i)\otimes_\Gamma T
\to \Hom_\Lambda(T,T')\otimes_\Gamma T\to 0\to\hdots\\
&\cong
\hdots \to 0\to T_i\xrightarrow{f} T'\to 0\to\hdots,
\end{align*}
which since $f$ is injective, is clearly isomorphism to $U_i$ (in degree zero).  Hence
\begin{align*}
\upnu_i\Gamma\otimes^{\bf L}_\Gamma T
&\cong(\Hom_\Lambda(T,V)\otimes_\Gamma T)\oplus (\Hom_\Lambda(T,U_i)\otimes^{\bf L}_\Gamma T)\\
&\cong V\oplus U_i,
\end{align*}
where $\Hom_\Lambda(T,V)\otimes_\Gamma T\cong V$ holds since $T$ is tilting and $V$ is projective. It follows that $\upnu_i\Gamma\otimes^{\bf L}_\Gamma T\cong \upnu_iT$ in $\Db(\mod\Lambda)$.  Applying $\RHom_\Lambda(T,-)$ gives the final statement.
\end{proof}

\begin{lemma}[\ref{easy one way appA text}]\label{easy one way appA}
Suppose that $\Lambda$ is a basic $\mathfrak{R}$-algebra, where $\mathfrak{R}$ is a complete local domain.  If $T,U\in\tilt_0\Lambda$ are related by a mutation at an indecomposable summand, then $C_T$ and $C_U$ do not overlap, and are separated by a codimension one wall.
\end{lemma}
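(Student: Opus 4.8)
The plan is to reduce the statement to a short computation in $K_0$, using the exchange sequence recalled at the start of this appendix. First I would record the two structural inputs. Since $T\in\tilt_0\Lambda$ with $\Gamma\colonequals\End_\Lambda(T)$, the functor $\RHom_\Lambda(T,-)\colon\Db(\mod\Lambda)\xrightarrow{\sim}\Db(\mod\Gamma)$ is an equivalence carrying $\add T$ onto $\proj\Gamma$, hence inducing an isomorphism $K_0\cong K_0(\Kb(\proj\Gamma))$ that sends $[T_0],\dots,[T_n]$ to the classes of the indecomposable projective $\Gamma$-modules; as $\Gamma$ is semiperfect ($\mathfrak{R}$ is complete local) with $n+1$ indecomposables, $\{[T_0],\dots,[T_n]\}$ is a $\mathbb{Z}$-basis of $K_0$, and therefore $\{[T_1],\dots,[T_n]\}$ is an $\mathbb{R}$-basis of $\Uptheta_\Lambda$, so that $C_T$ (and likewise $C_U$) is a genuine open simplicial cone. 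Second, since $P_0=T_0$ is a common summand and membership of $\tilt_0\Lambda$ is preserved by the mutation, the mutated summand is some $T_i$ with $1\le i\le n$; and after interchanging the roles of $T$ and $U$ if necessary, I may assume $T>U=\upnu_iT$.

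Then I would feed in the exchange sequence: because $T>\upnu_iT$, there is a short exact sequence $0\to T_i\xrightarrow{f}T'\to U_i\to 0$ with $T'\in\add(T/T_i)$, say $T'\cong\bigoplus_{j\neq i}T_j^{\oplus b_j}$ with $b_j\ge 0$ (the index $j$ ranging over $\{0,\dots,n\}\setminus\{i\}$). Passing to $K_0$ and then to $\Uptheta_\Lambda$, where $[T_0]=\mathbf{e}_0$ vanishes, gives
\[
[U_i]\;=\;-[T_i]+\sum_{\substack{1\le j\le n\\ j\neq i}}b_j\,[T_j],
\]
so in the coordinate system on $\Uptheta_\Lambda$ attached to the basis $([T_1],\dots,[T_n])$ the vector $[U_i]$ has $i$-th coordinate equal to $-1$.

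From here the conclusion is immediate. In those coordinates $C_T$ is the open positive orthant $\{x:x_j>0,\ 1\le j\le n\}$, whereas
\[
C_U=\Bigl\{-\phi_i[T_i]+\textstyle\sum_{j\neq i}(\phi_j+\phi_ib_j)[T_j]\ \Big|\ \phi_j>0\Bigr\}
\]
lies in the open half-space $\{x:x_i<0\}$; hence $C_T\cap C_U=\varnothing$, and the hyperplane $H\colonequals\{x:x_i=0\}=\Span\{[T_j]:1\le j\le n,\ j\neq i\}$ separates them. Moreover $\overline{C_T}\cap H=\overline{C_U}\cap H=\mathrm{cone}\{[T_j]:j\neq i\}$, which has dimension $n-1$, so $C_T$ and $C_U$ are precisely the two chambers adjacent along this common codimension one wall.

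I do not anticipate a real obstacle; the argument is bookkeeping, and the only genuinely external fact is that the indecomposable summands of a basic tilting module have classes forming a basis of $K_0$, which is standard (cf. the $g$-vector arguments of \cite{Hille,DIJ}). Two minor points to keep track of: the summand $T_0=P_0$ may appear in $T'$, but it is killed in $\Uptheta_\Lambda$ and so is irrelevant; and the symmetric case $U>T$ must be dispatched, which is done verbatim after swapping $T\leftrightarrow U$.
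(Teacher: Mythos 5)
Your argument is correct and is essentially the paper's own proof: both hinge on the exchange sequence $0\to T_i\to T'\to U_i\to 0$, pass to $\Uptheta_\Lambda$, and read off that the two cones lie on opposite sides of the hyperplane spanned by the classes of the $n-1$ common non-$P_0$ summands. The only (cosmetic) difference is that you work in coordinates given by the summands of $T$ and express $[U_i]$ there, whereas the paper uses the basis coming from $U$ and rewrites $C_T$ in it; your explicit check that the shared face has dimension $n-1$ is a small bonus the paper leaves as ``clear.''
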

\begin{proof}
By assumption, there are indecomposable modules $T_0,\hdots,T_n$ and $U_n$ such that $T=T_{<n}\oplus T_n$ and $U=T_{<n}\oplus U_n$, where $T_{<n}\colonequals \bigoplus_{i=0}^{n-1}T_i$.   We may assume that $T>U$, and then there is an exact sequence 
\[
0\rightarrow T_n\rightarrow X_{<n}\rightarrow U_n\rightarrow 0
\] 
where $X_{<n}\in\add T_{<n}$, say $X_{<n}\colonequals T_0^{\oplus a_0}\oplus T_1^{\oplus a_1}\oplus \hdots\oplus T_{n-1}^{\oplus a_{n-1}}$.
Thus, recalling that the $[-]$ notation works modulo $\Span\{\mathbf{e}_0\}$, we see that
\[
[T_n]=-[U_n]+\sum_{i=1}^{n-1} a_i[T_i] 
\]  
in $\Uptheta_{\Lambda}$, and so
\begin{align*}
C_T&\colonequals \{ \sum_{i=1}^n\upvartheta_i [T_i]\, \mid \upvartheta_i>0\mbox{ for all }1\leq i\leq n\}\\
&=\{  \sum_{i=1}^{n-1}(\upvartheta_i+a_i\upvartheta_n) [T_i]-\upvartheta_n[U_n] \, \mid \upvartheta_i>0\mbox{ for all }1\leq i\leq n\}.
\end{align*}
Since $U$ is tilting, the classes of indecomposable summands of $U$, namely $[T_0],\hdots, [T_{n-1}]$, and $[U_n]$, span $K_0\otimes _{\mathbb{Z}}\mathbb{R}\cong \mathbb{R}^{n+1}$. Hence they form a basis of $K_0\otimes _{\mathbb{Z}}\mathbb{R}$, and in particular the classes $[T_1],\hdots,[T_{n-1}],[U_n]$ in $\Uptheta_{\Lambda}$ form a basis of $\Uptheta_{\Lambda}$.

Write $H\subset \Uptheta_{\Lambda}$ for the linear subspace spanned by $[T_1],\hdots,[T_{n-1}]$. Then $H$ separates $\Uptheta_{\Lambda}$ into two half spaces $H_{+}\colonequals \{\sum_{i=1}^{n-1}b_i[T_i]+a[U_n] \mid a>0\}$  and $H_{-}\colonequals \{\sum_{i=1}^{n-1}b_i[T_i]+a[U_n] \mid a<0\}$. Since  $C_T\subset H_{-}$, $C_U\subset H_{+}$, and $H_{+}\cap H_{-}=\emptyset$, we obtain $C_T\cap C_U=\emptyset$. It is clear that $C_T$ and $C_U$ are separated by a codimension one wall contained in $H$.
\end{proof}

\begin{lemma}\label{DIJ lemmaA}
Suppose that $\Lambda$ is a basic $\mathfrak{R}$-algebra, where $\mathfrak{R}$ is a complete local domain.  Suppose that $T,U\in\tilt_0\Lambda$ are related  by a mutation at an indecomposable summand $T_n$.  If $T>U$, then  there exists an exact sequence $0\rightarrow \Lambda\rightarrow T' \rightarrow T'' \rightarrow 0$ such that  $T_n\notin\add T''$.
\end{lemma}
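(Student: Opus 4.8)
The plan is to deduce the lemma from Bongartz's lemma applied to the almost complete tilting module $\bar T\colonequals T/T_n$. Throughout I keep the notation of \ref{easy one way appA}: $T$ is basic, $T=\bar T\oplus T_n$ and $U=\bar T\oplus U_n$ with $\bar T$ sharing no indecomposable summand with $T_n$, and $T>U$.

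First I would record the two ingredients. By the uniqueness of tilting mutation recalled in \S\ref{tilting mutation} (following \cite{RS}), the almost complete tilting module $\bar T$ admits at most two complements, namely $T_n$ and $U_n$; hence the only basic tilting $\Lambda$-modules having $\bar T$ as a summand are $T$ and $U$. Second, $\bar T$ is a partial tilting module, being a summand of $T$ (so $\pd_\Lambda\bar T\le 1$ and $\Ext^1_\Lambda(\bar T,\bar T)=0$), and $\Ext^1_\Lambda(\bar T,\Lambda)$ is a finitely generated $\mathfrak{R}$-module since $\Lambda$ is module-finite over the complete local domain $\mathfrak{R}$. Bongartz's construction is then available (it is purely homological, and goes through verbatim in the module-finite setting; compare the tilting arguments in \cite{IW4}): there is a short exact sequence
\[
0\longrightarrow \Lambda\longrightarrow X\longrightarrow \bar T^{\oplus l}\longrightarrow 0
\]
for which the connecting map $\Hom_\Lambda(\bar T,\bar T^{\oplus l})\to\Ext^1_\Lambda(\bar T,\Lambda)$ is surjective, and with $\bar T\oplus X$ a tilting $\Lambda$-module.

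Finally I would invoke the Happel--Unger result \cite{HU} that the Bongartz completion is the maximum, with respect to the tilting order, among all completions of $\bar T$. Since the only completions give $T$ and $U$, and $T>U$, this forces $\add(\bar T\oplus X)=\add T$, so in particular $X\in\add T$. Setting $T'\colonequals X$ and $T''\colonequals \bar T^{\oplus l}$, the displayed sequence has both outer terms in $\add T$, and $T_n\notin\add T''$ since $T''\in\add\bar T$ and $\bar T$ has no summand isomorphic to $T_n$ (both $T$ and $U$ being basic). This is the required sequence; in the degenerate case $\Ext^1_\Lambda(\bar T,\Lambda)=0$ one simply takes $0\to\Lambda\xrightarrow{\mathrm{id}}\Lambda\to 0\to 0$.

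The step that needs the most care is checking that these two classical inputs — Bongartz's universal extension and the Happel--Unger maximality of the Bongartz complement — remain valid for basic $\mathfrak{R}$-algebras over a complete local domain rather than only for Hom-finite algebras, and that the "at most two complements" statement is precisely the uniqueness of mutation already in force. An alternative, more hands-on route would begin with an arbitrary $\add T$-coresolution $0\to\Lambda\to T^0\to T^1\to 0$ and splice in copies of the exchange sequence $0\to T_n\to X_{<n}\to U_n\to 0$ to delete $T_n$ from the cokernel term; but ensuring that the middle term stays in $\add T$ after these modifications is fiddly, which is why I expect the Bongartz route to be the cleanest.
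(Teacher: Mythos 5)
Your proof is correct, but it takes a genuinely different route from the paper's. The paper argues directly with approximations: starting from any coresolution $0\to\Lambda\to T'\to T''\to 0$ witnessing that $T$ is tilting, it replaces the induced map $T''\to\Lambda[1]$ by a \emph{minimal} right $\add T$-approximation, and then shows minimality forces $T_n\notin\add T''$ --- if $T_n$ occurred in $T''$, the exchange sequence $0\to T_n\to X'\to U_n\to 0$ together with $\pd_\Lambda U_n\le 1$ would let one factor the $T_n$-component of $T''\to\Lambda[1]$ through $X'\in\add(T/T_n)$, yielding a strictly smaller approximation. (This is essentially the ``hands-on route'' you sketch at the end, made rigorous by passing to the minimal approximation rather than splicing.) Your argument instead identifies $T$ as the Bongartz completion of $\bar T\colonequals T/T_n$: the basic completions of $\bar T$ are exactly $T$ and $U$ by the uniqueness of mutation recalled in \S\ref{tilting mutation}, the Bongartz completion is $\geq$ every completion (a two-line check via the long exact sequence applied to the universal extension), and $T>U$ then forces the Bongartz sequence $0\to\Lambda\to X\to\bar T^{\oplus l}\to 0$ to have $X\in\add T$, with the cokernel term visibly free of $T_n$ since $T$ is basic. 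Both arguments are sound. What yours buys is a cleaner structural fact (the larger of the two complements is always the Bongartz one), at the cost of re-verifying Bongartz's construction and the Happel--Unger maximality outside the Hom-finite setting --- both do go through, the former because $\Ext^1_\Lambda(\bar T,\Lambda)$ is a finitely generated module over the complete local ring $\mathfrak{R}$, the latter directly from the universal extension --- whereas the paper's argument needs only the exchange sequence already supplied by the hypothesis $T>U$, plus minimal approximations, which exist by the Krull--Schmidt property.
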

\begin{proof}
By the definition of tilting modules, there is an exact sequence  $0\rightarrow \Lambda\rightarrow T' \rightarrow T'' \rightarrow 0$ with $T',T''\in \add T$, and this induces the following triangle in ${\rm D^b}(\mod \Lambda)$
\[
\Lambda\rightarrow T' \rightarrow T'' \xrightarrow{f}\Lambda[1].
\]
Since $\Ext^1(T,T)=0$, we  see that $f\colon T'' \rightarrow \Lambda[1]$ is a right $\add T$-approximation. Replacing $T'$ and $T''$ if necessary, we may assume that the approximation $f$ is right minimal, and we will show that for such a sequence $0\rightarrow \Lambda\rightarrow T' \rightarrow T'' \rightarrow 0$, we have $T_n\notin \add T''$.

Suppose that $T_n\in \add T''$, and let $Y$ be the summand of $T''$ such that  $T_n\notin \add Y$ and  $T''=(T_n)^{\oplus a}\oplus Y$ for some $a>0$.  Let $f_n\colon (T_n)^{\oplus a}\rightarrow \Lambda[1]$ and $f_Y\colon Y\rightarrow \Lambda[1]$ be the components of $f$. By assumption, there are indecomposable modules $T_n$ and $U_n$ such that  $T=X\oplus T_n$ and $U=X\oplus U_n$. Since $T>U$, there is an exact sequence 
\[
0\rightarrow T_n\xrightarrow{g} X' \rightarrow U_n \rightarrow0,
\]
where $X'\in\add X$. Applying $\Hom_{\Db(\mod\Lambda)}(-,\Lambda[1])$ to the above gives an exact sequence
\[
\Hom_{\Db(\mod\Lambda)}(X',\Lambda[1])\xrightarrow{\circ g}
\Hom_{\Db(\mod\Lambda)}(T_n,\Lambda[1])\rightarrow
\Hom_{\Db(\mod\Lambda)}(U_n[-1],\Lambda[1])=0,
\]
since $\pd_\Lambda U_n\leq 1$.
Hence there exists a morphism $h\colon X'^{\oplus a}\rightarrow \Lambda[1]$ such that  $f_n=h\circ g^{\oplus a}$.
But $h+f_Y\colon X'^{\oplus a}\oplus Y\rightarrow \Lambda[1]$ is a right $\add T$-approximation, with $X'^{\oplus a}\oplus Y\in \add X$, and so $T_n\notin \add(X'^{\oplus a}\oplus Y)$. This contradicts the minimality of $f\colon T''\rightarrow \Lambda[1]$, since $T_n\in \add T''$. Hence $T_n\notin \add T''$. 
\end{proof}

\begin{thm}[\ref{DIJ lemmaB}]\label{DIJ lemmaB app}
Suppose that $\Lambda$ is a basic $\mathfrak{R}$-algebra, where $\mathfrak{R}$ is a complete local domain.  Suppose that $T,U\in\tilt_0\Lambda$ are related  by a mutation at an indecomposable summand, so by \ref{easy one way appA} $C_T$ and $C_U$ are separated by $H$.  Suppose that $[\Lambda]\notin H$.  Then $T>U$ iff $C_T$ lies on the same side of $H$ as $[\Lambda]$.
\end{thm}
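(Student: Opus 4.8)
The plan is to exploit two facts already in hand: the explicit coordinate description of the wall $H$ extracted from the proof of \ref{easy one way appA}, and the short exact sequence $0\to\Lambda\to T'\to T''\to 0$ with $T',T''\in\add T$ and $T_n\notin\add T''$ provided by \ref{DIJ lemmaA} when $T>U$, where $T_n$ denotes the indecomposable summand being mutated. First I would reduce to a single implication. Since $T$ and $U$ are related by a mutation, by definition exactly one of $T>U$ and $U>T$ holds (they are distinct, and $T\geq U\geq T$ would force $T\cong U$), and by \ref{easy one way appA} the chambers $C_T$ and $C_U$ lie on opposite sides of $H$. So it suffices to prove that $T>U$ implies $C_T$ lies on the same side of $H$ as $[\Lambda]$: granting this, if instead $U>T$ then the same implication with $T$ and $U$ swapped puts $C_U$ (hence not $C_T$) on the $[\Lambda]$-side, so $C_T$ being on the $[\Lambda]$-side forces $T>U$.

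So assume $T>U$, with $T=T_{<n}\oplus T_n$ and $U=T_{<n}\oplus U_n$. From the proof of \ref{easy one way appA}, using the exact sequence $0\to T_n\to X_{<n}\to U_n\to 0$ with $X_{<n}\in\add T_{<n}$, the set $\{[T_1],\dots,[T_{n-1}],[U_n]\}$ is a basis of $\Uptheta_\Lambda$, the wall $H$ is its linear subspace spanned by $[T_1],\dots,[T_{n-1}]$, and every point of $C_T$ has strictly negative $[U_n]$-coordinate. Hence it is enough to show that $[\Lambda]$ also has strictly negative $[U_n]$-coordinate. Now \ref{DIJ lemmaA} gives $0\to\Lambda\to T'\to T''\to 0$ with $T',T''\in\add T$ and $T_n\notin\add T''$, so $[\Lambda]=[T']-[T'']$ in $K_0$; writing $[\Lambda]=\sum_{i=0}^n m_i[T_i]$ in the $\mathbb{Z}$-basis of $K_0$ formed by the indecomposable summands of the tilting module $T$, the condition $T_n\notin\add T''$ gives $m_n\geq 0$. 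The hypothesis $[\Lambda]\notin H$ means exactly that $[\Lambda]\notin\Span\{[T_0],[T_1],\dots,[T_{n-1}]\}$ in $K_0\otimes_{\mathbb{Z}}\mathbb{R}$ (recall $\mathbf{e}_0=[P_0]=[T_0]$), so $m_n\neq 0$, whence $m_n>0$. Finally, passing to $\Uptheta_\Lambda$ and substituting $[T_n]=\sum_{i=1}^{n-1}\alpha_i[T_i]-[U_n]$ (read off the sequence $0\to T_n\to X_{<n}\to U_n\to 0$) into $[\Lambda]=\sum_{i=1}^n m_i[T_i]$ shows the $[U_n]$-coordinate of $[\Lambda]$ equals $-m_n<0$, the same sign as for $C_T$; this completes the forward implication, and hence the proof.

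The only delicate point is the $K_0$ bookkeeping: one must know that the classes of the indecomposable summands of a classical tilting module over a basic $\mathfrak{R}$-algebra form a $\mathbb{Z}$-basis of $K_0$, so that the multiplicity $m_n$ of $T_n$ in $[\Lambda]$ is well defined and visibly non-negative, and one must identify the geometric hypothesis $[\Lambda]\notin H$ with the non-vanishing of this multiplicity after quotienting by $\Span\{\mathbf{e}_0\}$. Both of these are already used (for the module $U$) inside the proof of \ref{easy one way appA}, so no new ingredient is needed; everything else is elementary linear algebra in the two bases of $\Uptheta_\Lambda$ that have already been fixed.
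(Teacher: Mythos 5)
Your proposal is correct and follows essentially the same route as the paper's own proof: reduce to the forward implication via the mutation dichotomy and symmetry, write $[\Lambda]$ in the basis coming from the summands of $T$, use Lemma~\ref{DIJ lemmaA} to see the $[T_n]$-coefficient is non-negative and the hypothesis $[\Lambda]\notin H$ to see it is non-zero, hence positive. The only difference is that you make explicit the final change of basis $[T_n]=\sum_i a_i[T_i]-[U_n]$ identifying the sign of that coefficient with the side of $H$, a step the paper leaves implicit in ``it follows that $[\Lambda]$ is on the same side of $H$ as $C_T$.''
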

\begin{proof}
($\Rightarrow$) Suppose that $T>U$. Since the summands of $T$ (excluding $T_0=P_0$) form a basis for $\Uptheta_\Lambda$, we can write
\[
[\Lambda]=b_1[T_1]+\hdots+ b_{n-1}[T_{n-1}] + b_n[T_n]
\]
Certainly $b_n\neq 0$, else $[\Lambda]\in H$, which is false by assumption.   Since by \ref{DIJ lemmaA} there are objects $T',T''\in \add T$ such that $[\Lambda]=[T']-[T'']$ and $T_n\notin \add T''$, necessarily $b_n>0$ given that it is non-zero.  It follows that $[\Lambda]$ is on the same side of $H$ as $C_T$.\\
($\Leftarrow$) If $\neg(T>U)$, then since by the assumption $T$ and $U$ are the mutation of each other at an indecomposable summand, necessarily $U>T$.  Replicating the above proof word-for-word, we conclude that $C_U$ is on the same side of $H$ as $[\Lambda]$.  Since $C_T$ is on the other side of $H$ than $C_U$, it follows that $C_T$ is not on the same side of $H$ as $[\Lambda]$.
\end{proof}

\end{document}